\newtheorem{theorem}{Theorem}
\newtheorem{proposition}{Proposition}
\newtheorem{remark}{Remark}
\newtheorem{definition}{Definition}
\newtheorem{example}{Example}
\newtheorem{corollary}{Corollary}
\newcommand{\ad}{\operatorname{ad}}
\newcommand{\Ad}{\operatorname{Ad}}
\newcommand{\trace}{\operatorname{trace}}
\newcommand{\Mod}{\operatorname{mod}}
\newcommand{\Spec}{\operatorname{Spec}}
\newcommand{\Spin}{\operatorname{Spin}}
\newcommand{\Sp}{\operatorname{Sp}}
\newcommand{\SU}{\operatorname{SU}}
\newcommand{\SO}{\operatorname{SO}}
\newcommand{\PSp}{\operatorname{PSp}}
\newcommand{\PSU}{\operatorname{PSU}}
\newcounter{table_counter}	
\newcommand{\Table}{\refstepcounter{table_counter}\newline\newline{\it Table \arabic{table_counter}}.}
\begin{document}

\title[Laplace operator spectrum]{Laplace operator spectrum on connected compact simple rank three Lie groups}
\author{V.\,N.\,Berestovskii, I.\,A.\,Zubareva, V.\,M.\,Svirkin}
\thanks{The first author was partially supported by the Russian Foundation
for Basic Research (Grant 14-01-00068-a) and a grant of the Government
of the Russian Federation for the State Support of Scientific Research
(Agreement \No 14.B25.31.0029)}
\address{V.N.Berestovskii}
\address{Sobolev Institute of mathematics SD RAS, \newline 4 Acad. Koptyug avenue, Novosibirsk, 630090, Russia}
\email{vberestov@inbox.ru}
\address{I.A.Zubareva}
\address{Sobolev Institute of mathematics SD RAS, Omsk Branch, \newline 13 Pevtsova str.,  Omsk, 644099, Russia}
\email{i\_gribanova@mail.ru}
\address{V.M.Svirkin}
\address{Omsk F.M.~Dostoevsky State university, \newline 55а Mira avenue, Omsk, 644077, Russia}
\email{v\_svirkin@mail.ru}
\maketitle
\maketitle {\small
\begin{quote}
\noindent{\sc Abstract.}
In this paper are given explicit calculations of Laplace operator spectrum for smooth real/complex-valued functions on all connected compact simple rank three Lie groups with biinvariant Riemannian metric and established a connection of obtained formulas with the number theory and integer ternary and binary quadratic forms.
\end{quote}}

{\small
\begin{quote}
\noindent{\textit{Key words and phrases:}} group representation, integer binary quadratic form, integer ternary quadratic form, Killing form, Laplace operator, spectrum. \newline
MSC 22E30, 49J15, 53C17
\end{quote}}

\section*{Introduction}

In paper \cite{BerSvir1} are studied the spectrum of the Laplace(-Beltrami) operator on smooth real-valued functions defined on compact normal homogeneous Riemannian manifolds. It was shown that in some sense this problem could be reduced to considerations of compact simply connected (connected) simple Lie groups $G$ with biinvariant (i.e. invariant relative to left and right shifts) Riemannian metric $\nu$. In the last case it is suggested an algorithm for the search of Laplacian spectrum via representations of Lie algebras of Lie groups $G$.

In addition, it is proved that elements of irreducible matrix real representations $r$ of the Lie group $G$ are eigenfunctions of the Laplacian, corresponding one and the same eigenvalue
$\lambda_{r}\leq 0$, moreover $\lambda_{r}=0$ if and only if $r(G)=\{(1)\}$. Such functions constitute a basis $F$ for all real eigenfunctions of the Laplacian.

The spectrum of the Laplace operator on $(G,\nu)$ is expressed by dimensions $d_r$ of representations $r$, dimensions $m$ of the Lie group $G$, the scalar product $\nu(e)$ on the Lie algebra $\frak{g}$ of the Lie group $G$ and associated to real representations
$\rho=dr(e)$ of the Lie algebra $\frak{g}$ a bilinear symmetric form $k_{\rho}$. Some description of all irreducible real representations $\rho$ of a real simple Lie algebra $\frak{g}$ via irreducible complex representations of complex hull $\frak{k}$ of $\frak{g}$ is obtained in book \cite{Onishik} by A.L.~Onishchik and is presented in article \cite{BerSvir1}. Any irreducible complex representation of the Lie algebra $\frak{k}$ is defined by its highest weight $\Lambda$. Its dimension could be calculated via $\Lambda$ by known formula of H.~Weyl. There exists formula
which permits to calculate $\lambda_{r}$ by corresponding highest weight $\Lambda$ and $\nu(e)$. In common these formulas give an algorithm of calculation of the spectrum of the Laplacian for real-valued functions on $(G,\nu)$.

In paper \cite{BerSvir2} is formulated an improved algorithm, by means of whom and usage of results in number theory and theory of binary quadratic forms with integer coefficients are given explicit calculations of the Laplacian spectrum  for all compact simply connected simple rank two Lie groups. The improving of algorithm consisted mainly in the passage to complex case under calculation of multiplicity of eigenvalues of the Laplacian.

In work \cite{Svir3} the search algorithm for Laplacian spectrum from \cite{BerSvir2} is generalized to non simply connected case, i.e. it is considered the case of arbitrary compact connected simple Lie group $G$. In \cite{Svir3} is calculated also Laplacian spectrum for all compact connected simple Lie groups with ranks one and two.

Lie groups with biinvariant Riemannian metric are partial cases of Riemannian symmetric spaces. In article \cite{Ber1} is given a detailed survey of known methods for calculation of Laplace-Beltrami operator spectrum for real-valued and complex-valued smooth functions on compact connected normal homogeneous Riemannian manifolds. These methods are especially effective for compact normal homogeneous Riemannian manifolds with positive Euler characteristic and compact simply connected
irreducible Riemannian symmetric spaces.

In paper \cite{Ber2} by simple method, using Hopf fibrations and trigonometric formulas of spherical geometry, are found eigenvalues of Laplacian for smooth real-valued functions and corresponding spherical functions for all simply connected CROSS'es (compact rank one Riemannian symmetric spaces). Also in \cite{Ber2} are found direct connections of these functions to special functions such as hypergeometric finite Gauss series, Jacobi polynomials and orthogonal polynomials, including ultraspherical Gegenbauers polynomials, which in turn include Legendre polynomials and Chebyshev polynomials of the first and the second kind.

In our paper by means of the search algorithms for Laplacian spectrum from \cite{BerSvir2} and
\cite{Svir3} we conduct explicit calculations of Laplacian spectrum for smooth real- or
complex-valued functions on all compact connected simple rank three Lie groups with biinvariant Riemannian metric and set a connection of obtained formulas to the number theory and integer ternary and binary quadratic forms.

\section{Preliminaries}

Let $G$ be a compact connected simple Lie group with biinvariant Riemannian metric $\nu$. The set  $\Spec(G,\nu)$ of all eigenvalues of Laplace-Beltrami operator $\Delta$ on smooth real-valued functions defined on $(G,\nu)$ with taking into account of multiplicity of eigenvalues, i.e. dimension of spaces of corresponding eigenfunctions, is called {\it the spectrum} of Laplace operator. In work \cite{BerSvir1} are presented some general notions and results on Laplace-Beltrami operator, i.e. its eigenvalues and eigenfunctions on Riemannian $C^{\infty}$-manifolds. In particular, the spectrum of Lie group $(G,\nu)$ can be presented as follows:
\begin{equation}\label{Eq:spec}
\Spec(G,\nu)=\{0=\lambda_0> \lambda_1\geq \lambda_2\geq\ldots\}.
\end{equation}
The Laplacian is naturally generalized onto complex-valued functions.

\begin{definition}
Bilinear (symmetric) form $k_{\rho}$ on a Lie algebra $\frak{g}$ defined by formula
$$k_{\rho}(u,v)=\trace(\rho(u)\rho(v)),\quad u,\,v\in\frak{g},$$
is said to be the form associated with a representation $\rho$ of $\frak{g}$. The form $k_{\ad}$, where $\ad(u)(v):=[u,v]$ is the adjoint representation of Lie algebra $\frak{g}$, is called the Killing form of Lie algebra $\frak{g}$.
\end{definition}

\begin{remark}
A compact connected Lie group $G$ is simple if and only if its Lie algebra $\frak{g}$ is simple, which is equivalent to irreducibility of the adjoint representation $\ad$. In addition for any irreducible non-zero representation $\rho$ of the Lie algebra $\mathfrak{g}$ the form $k_{\rho}$ is negatively defined and is proportional to the scalar product $\nu$.
\end{remark}

Theorem 1.2 and corollary 1.2 of paper \cite{BerSvir2} implies the following statement.
\begin{proposition}\label{Prop:spec_ad}
If $(G,\nu)$ is a compact connected simple $m$-dimensional Lie group with biinvariant Riemannian metric $\nu$ such that $\nu(e)=-k_{ad}$ then for the adjoint representation $\Ad$ the Lie group
$G$:
\begin{equation}\label{Eq:spec_ad}
\lambda_{\Ad}=-1,\, \dim \Ad = d_{\Ad} = m.
\end{equation}
\end{proposition}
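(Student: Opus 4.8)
The plan is to use the Casimir-operator description of the eigenvalue $\lambda_{r}$ attached to an irreducible representation, which is precisely the content of Theorem~1.2 and Corollary~1.2 of \cite{BerSvir2} cited just above. First I would fix a $\nu(e)$-orthonormal basis $e_{1},\dots,e_{m}$ of $\mathfrak{g}$ and recall that, on the linear span of the matrix elements of an irreducible real representation $r$ with $\rho=dr(e)$, the Laplacian $\Delta$ acts by the operator induced from the Casimir element $\sum_{i=1}^{m}\rho(e_{i})^{2}$. By the irreducibility hypothesis (which for $\rho=\ad$ is guaranteed by the Remark, since $\mathfrak{g}$ is simple) together with Schur's lemma, this operator is a scalar multiple of the identity of the $d_{r}$-dimensional representation space, and that scalar is exactly $\lambda_{r}$.

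Taking traces then yields
\[
\lambda_{r}\,d_{r}=\sum_{i=1}^{m}\trace\bigl(\rho(e_{i})^{2}\bigr)=\sum_{i=1}^{m}k_{\rho}(e_{i},e_{i}),
\]
which is the formula expressing $\lambda_{r}$ through the associated form $k_{\rho}$ and the metric $\nu(e)$. The next step is to specialize to $\rho=\ad$, so that $k_{\rho}=k_{\ad}$; by the hypothesis $\nu(e)=-k_{\ad}$ and the orthonormality of the chosen basis, $k_{\ad}(e_{i},e_{i})=-\nu(e)(e_{i},e_{i})=-1$ for every $i$, hence $\sum_{i=1}^{m}k_{\ad}(e_{i},e_{i})=-m$.

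It remains to identify $d_{\Ad}$. Since $\Ad$ is the representation of $G$ on its own Lie algebra whose differential at $e$ is $\ad$, its degree equals $d_{\Ad}=\dim\mathfrak{g}=m$; moreover $\ad$ is nonzero — indeed irreducible — because $\mathfrak{g}$ is simple, so $\Ad$ is a genuine $m$-dimensional representation and not the trivial one (in particular $\lambda_{\Ad}\neq 0$). Combining the two computations gives $\lambda_{\Ad}=\tfrac{1}{d_{\Ad}}\sum_{i=1}^{m}k_{\ad}(e_{i},e_{i})=\tfrac{-m}{m}=-1$, which together with $d_{\Ad}=m$ is exactly \eqref{Eq:spec_ad}.

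Once the Casimir formula for $\lambda_{r}$ is in place the argument is essentially bookkeeping, so I do not expect a substantive obstacle. The one point demanding care is the sign and normalization convention: one must use the equality $\nu(e)=-k_{\ad}$ consistently with the definition of $\lambda_{r}$ as a nonpositive number and with the negative-definiteness of $k_{\ad}$ recorded in the Remark, so that the trace computation produces the value $-1$ rather than $+1$ or some other normalization-dependent constant.
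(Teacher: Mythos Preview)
Your argument is correct and is precisely the Casimir computation that underlies Theorem~1.2 and Corollary~1.2 of \cite{BerSvir2}, which the paper simply cites without reproducing a proof. Since the paper offers no independent argument beyond that citation, there is nothing further to compare: your proof spells out exactly the mechanism the paper is invoking.
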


Omitting details, let us remind that a (simple) Lie algebra $\frak{g}$ defines {\it the root system} $\Gamma$ as some subset of the dual space $\frak{t}(\mathbb{R})^{\ast}$ to real form
$\frak{t}(\mathbb{R})$ of the Cartan subalgebra $\frak{t}$ of complex span $\frak{k}$ of the Lie algebra $\frak{g}$. Subsystems $\Gamma^{+}\subset\Gamma$ of {\it positive roots} and
$\Pi=\{\alpha_1,\dots,\alpha_l\}\subset\Gamma^{+}$ of positive (linearly independent)
{\it simple roots} are defined by some natural way. The pair
$(\frak{t}(\mathbb{R}),\left(\cdot,\cdot\right))$ is a real Euclidean space, where
$\left(\cdot,\cdot\right):=k_{\ad}\mid_{\mathfrak{t}(\mathbb{R})}$ is the restriction of the Killing form $k_{\ad}$ of the Lie algebra $\mathfrak{g}$ to the subspace
$\mathfrak{t}(\mathbb{R})$. Preserving the same notation the form $\left(\cdot,\cdot\right)$ can be transfer to the dual space $\frak{t}(\mathbb{R})^{\ast}$. More detailed information on the root and weight systems of Lie algebras is contained in $\S\,5$ \cite{BerSvir1} or in $\S\,1$ \cite{BerSvir2} as well as in book \cite{Burb}.

Let us define function $\{\cdot,\cdot\}$ in the following way: for
$\alpha\neq 0$, $\beta\in\frak{t}(\mathbb{R})^{\ast}$
$$\{\beta,\alpha\}:=\frac{2(\alpha,\beta)}{(\alpha,\alpha)}.$$
It is known that if $\alpha$, $\beta\in\Gamma$ then $\{\beta,\alpha\}\in\mathbb{Z}$.
{\it The fundamental weights} $\bar{\omega}_1,\dots,\bar{\omega}_l$ of a Lie algebra
$\frak{g}$ are uniquely defined  by the following relations:
\begin{equation}\label{Eq:dual_basis}
\{\bar{\omega}_i,\alpha_j\}=\delta_{ij},\,\,\mbox{where }\delta_{ij} -
\mbox{the Kronecker symbol},\,\,1\leq i,j\leq l.
\end{equation}

Let us present the following result of E.~Cartan (see \cite{Cartan}).

\begin{proposition}\label{Cartan}
The sets of weights $\Lambda(\frak{g})$ and highest weights $\Lambda^{+}(\frak{g})$ of all irreducible complex representations of complex span $\frak{k}$ of a simple Lie algebra $\frak{g}$
with the fundamental weights $\bar{\omega}_1,\dots,\bar{\omega}_l$ are given by the following way:
$$\Lambda(\frak{g})=\{\Lambda\in\frak{t}(\mathbb{R})^{\ast}\,\mid\,\Lambda=\sum\limits_{i=1}^{l}\Lambda_{i}\bar{\omega}_i,\,\,\mbox{where }\Lambda_{i}\in\mathbb{Z}\},$$
$$\Lambda^{+}(\frak{g})=\{\Lambda\in\frak{t}(\mathbb{R})^{\ast}\,\mid\,\Lambda=\sum\limits_{i=1}^{l}\Lambda_{i}\bar{\omega}_i,\,\,\mbox{where }\Lambda_{i}\in\mathbb{Z}\,\,\mbox{and }\Lambda_i\geq 0\}.$$
In addition, up to equivalence, an irreducible complex representation of Lie algebra $\frak{g}$ is defined by its highest weight $\Lambda\in\Lambda^{+}(\frak{g})$.
\end{proposition}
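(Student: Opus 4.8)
The plan is to reduce everything to the representation theory of $\mathfrak{sl}_2(\mathbb{C})$ by means of the root $\mathfrak{sl}_2$-triples, and to treat the existence half separately by a Verma module quotient argument. Fix, for each simple root $\alpha_i$, a standard triple $\{e_i,h_i,f_i\}$ in $\mathfrak{k}$ with $h_i=\alpha_i^{\vee}$ the coroot, so that $[h_i,e_i]=2e_i$, $[h_i,f_i]=-2f_i$, $[e_i,f_i]=h_i$, and $\mu(h_i)=\{\mu,\alpha_i\}$ for $\mu\in\mathfrak{t}(\mathbb{R})^{\ast}$. If $V$ is any finite-dimensional complex $\mathfrak{k}$-module and $\mu$ a weight of $V$, then restricting $V$ to the subalgebra $\langle e_i,h_i,f_i\rangle\cong\mathfrak{sl}_2(\mathbb{C})$ and using that the eigenvalues of $h_i$ on a finite-dimensional $\mathfrak{sl}_2$-module are integers gives $\{\mu,\alpha_i\}=\mu(h_i)\in\mathbb{Z}$ for every $i$; by the defining relations (\ref{Eq:dual_basis}) this says precisely that $\mu=\sum_i\mu(h_i)\,\bar\omega_i\in\bigoplus_i\mathbb{Z}\bar\omega_i$. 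Hence $\Lambda(\mathfrak{g})\subseteq\bigoplus_i\mathbb{Z}\bar\omega_i$, the reverse inclusion being postponed to the existence step.

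Next, to see that a highest weight $\Lambda$ is dominant, let $v^{+}$ be a highest weight vector of an irreducible module $V$, so $e_iv^{+}=0$ for all $i$. Applying the $\mathfrak{sl}_2$-theory to the $\langle e_i,h_i,f_i\rangle$-submodule generated by $v^{+}$: a vector annihilated by the raising operator has nonnegative integer $h_i$-eigenvalue, i.e. $\{\Lambda,\alpha_i\}\ge 0$. Thus $\Lambda\in\Lambda^{+}(\mathfrak{g})$. For uniqueness, decompose $\mathfrak{k}=\mathfrak{n}^{-}\oplus\mathfrak{t}\oplus\mathfrak{n}^{+}$ with Borel subalgebra $\mathfrak{b}=\mathfrak{t}\oplus\mathfrak{n}^{+}$; by the Poincar\'e--Birkhoff--Witt theorem an irreducible module with highest weight vector $v^{+}$ equals $U(\mathfrak{n}^{-})v^{+}$, the weight $\Lambda$ occurs with multiplicity one, and all other weights lie in $\Lambda-\sum_i\mathbb{Z}_{\ge 0}\alpha_i$, none equal to $\Lambda$. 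The standard diagonal trick — inside $V\oplus W$ take the submodule generated by $(v^{+},w^{+})$ and project to each factor — shows two irreducible modules with the same highest weight are isomorphic.

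The step I expect to be the real obstacle is \emph{existence}: given $\Lambda\in\Lambda^{+}(\mathfrak{g})$, one must produce a finite-dimensional irreducible $\mathfrak{k}$-module of highest weight $\Lambda$. The cleanest route is the Verma module $M(\Lambda)=U(\mathfrak{k})\otimes_{U(\mathfrak{b})}\mathbb{C}_{\Lambda}$, which has a unique maximal proper submodule and hence a unique irreducible quotient $L(\Lambda)$ of highest weight $\Lambda$; the work is to prove $\dim L(\Lambda)<\infty$. Using the dominance $\{\Lambda,\alpha_i\}\in\mathbb{Z}_{\ge 0}$ and the commutation relations in $M(\Lambda)$, one shows that each $f_i^{\{\Lambda,\alpha_i\}+1}v^{+}$ generates a proper submodule, so in $L(\Lambda)$ every $f_i$ (and likewise every $e_i$) acts locally nilpotently; this forces the weight system of $L(\Lambda)$ to be stable under the Weyl group $W$. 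Since each $W$-orbit of weights meets the dominant cone, and there are only finitely many dominant weights lying below $\Lambda$, the module $L(\Lambda)$ has finitely many weights, each of finite multiplicity, hence is finite-dimensional. (For a paper on compact groups one may alternatively invoke the Peter--Weyl theorem together with the Borel--Weil realization of $L(\Lambda)$ as holomorphic sections of a line bundle on the flag manifold $G_{\mathbb{C}}/B$.) Once existence is in hand, each $\bar\omega_i$ is a weight of the $i$-th fundamental representation and arbitrary elements of $\bigoplus_i\mathbb{Z}\bar\omega_i$ are realized inside tensor products of these and their duals, giving the reverse inclusion $\bigoplus_i\mathbb{Z}\bar\omega_i\subseteq\Lambda(\mathfrak{g})$ and completing the proof.
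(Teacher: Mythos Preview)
Your argument is the standard modern textbook proof of Cartan's highest-weight theorem (integrality and dominance via the root $\mathfrak{sl}_2$-triples, uniqueness via PBW and the diagonal embedding, existence via the irreducible Verma quotient $L(\Lambda)$ and local nilpotence of the $e_i,f_i$), and it is correct at the level of detail you give.

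There is, however, nothing to compare it against: the paper does not prove this proposition at all. It is stated there as a classical result of \'E.~Cartan and simply attributed to the reference \cite{Cartan}; the paper's purpose is to \emph{use} the classification of irreducible representations, not to re-establish it. So your proof is not an alternative route to the paper's --- it is a proof where the paper offers only a citation. If you want to align with the paper's style, a single sentence pointing to Cartan's 1913 paper (or to a standard modern reference such as Bourbaki, Humphreys, or the Onishchik lectures already cited in the paper) would suffice.
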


The search algorithm of Laplacian spectrum on a Lie group $(G,\nu)$ is contained in consideration of all irreducible complex representations and calculation of eigenvalues of the Laplacian and their multiplicities via highest weights of these representations. For a simply connected Lie group  $(G,\nu)$ the set of highest weights coincides with the set $\Lambda^{+}(\frak{g})$ of its Lie algebra $\frak{g}$ from proposition \ref{Cartan}. In general case of connected simple Lie group $G$ with biinvariant Riemannian metric $\nu$ the search of Laplacian spectrum reduces to the search of its highest weights $\Lambda^{+}(G)$ of irreducible complex representations.

The root system $\Gamma$ given on the space $\frak{t}(\mathbb{R})^{\ast}$ of Lie algebra
$\frak{g}$ with simple roots $\alpha_1,\ldots,\alpha_l$ defines two closed subgroups
$\Lambda_0(\Gamma)$ and $\Lambda_1(\Gamma)$ of the group $\frak{t}(\mathbb{R})^{\ast}$ with respect to addition: $\Lambda_0(\Gamma)$ is lattice generated by the system $\Gamma$, i.e.
$$\Lambda_0(\Gamma) = \Big\{\, \beta\in \frak{t}(\mathbb{R})^{\ast} \;\bigm|\;\beta= \sum_{j=1}^{l} n_j\alpha_j,\;n_j\in \mathbb{Z},\,\,j= 1,\ldots,l\Big\},$$
$\Lambda_1(\Gamma)$ is subgroup which is dual to the system $\Gamma$ relative to function
$\{\cdot, \cdot\}$:
$$\Lambda_1(\Gamma) = \Big\{\, \beta\in \frak{t}(\mathbb{R})^{\ast} \;\bigm|\;\, \{\beta, \alpha_j \}\in \mathbb{Z},\;j = 1,\ldots,l \Big\}.$$
It follows from relations (\ref{Eq:dual_basis}) that $\Lambda_1(\Gamma)$ coincides with the lattice  $\Lambda(\frak{g})$ from proposition \ref{Cartan}, i.e. it is generated by the fundamental weights:
$$\Lambda_1(\Gamma) = \Lambda(\frak{g}) = \Big\{\, \beta\in \frak{t}(\mathbb{R})^{\ast} \;\bigm|\;\, \beta= \sum_{j=1}^{l} n_j\bar{\omega}_j,\;n_j\in \mathbb{Z},\;j=1,\ldots,l\Big\}.$$

The set of all weights $\Lambda(G)$ of Lie group $(G,\nu)$ is called
{\it characteristic lattice} of Lie group $G$. Let us present the most important properties of characteristic lattice from work \cite{Dyn} and appendix by A.L.~Onishchik in book \cite{Adams}.

\begin{proposition}\label{Prop:Lambda1}
Let $G$ be a compact connected Lie group with the root system $\Gamma$. Then characteristic lattice $\Lambda(G)$ is connected with lattices $\Lambda_0(\Gamma)$ and $\Lambda_1(\Gamma)$ by the following relations:
\begin{equation}\label{Eq:pi1zg}
\Lambda_0(\Gamma)\subseteq\Lambda(G)\subseteq\Lambda_1(\Gamma),
\end{equation}
in addition, in this sequence of additive subgroups of the group $\mathfrak{t}(\mathbb{R})^*$ every preceding group is a subgroup of the next one, and
$$
\Lambda_1(\Gamma)/\Lambda(G) \cong \pi_1(G), \quad\quad
\Lambda(G)/\Lambda_0(\Gamma) \cong C(G),
$$
where $\pi_1(G)$ is the fundamental group of $G$, while $C(G)$ is the center of the Lie group $G$.
Moreover, for any additive subgroup $\Lambda$ of the group $\mathfrak{t}(\mathbb{R})^*,$ satisfying the relation (\ref{Eq:pi1zg}), there exists a unique up to isomorphism Lie group $G$, whose characteristic lattice $\Lambda(G)$ coincides with $\Lambda$.
\end{proposition}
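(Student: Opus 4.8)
The plan is to transfer the whole statement to the level of a maximal torus.

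\textbf{Reduction to a maximal torus.} Fix a maximal torus $T$ of $G$ with Lie algebra $\mathfrak{t}(\mathbb{R})$, and let $L(G)=\ker(\exp\colon\mathfrak{t}(\mathbb{R})\to T)$ be the corresponding integer lattice, so $T\cong\mathfrak{t}(\mathbb{R})/L(G)$. Write $X^{\ast}(T):=\{\mu\in\mathfrak{t}(\mathbb{R})^{\ast}\mid\mu(L(G))\subseteq\mathbb{Z}\}$ (after the usual normalization of $\exp$). The first step is to show $\Lambda(G)=X^{\ast}(T)$. Restricting a finite-dimensional complex representation of $G$ to $T$ shows every weight of $G$ lies in $X^{\ast}(T)$; conversely, by the theorem of the highest weight (E.~Cartan, H.~Weyl; cf. Proposition~\ref{Cartan}) every dominant element of $X^{\ast}(T)$ is the highest weight of an irreducible representation of $G$, and every element of $X^{\ast}(T)$ is conjugate under the Weyl group to a dominant one, hence occurs as a weight of $G$.

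\textbf{Integer lattices of $\widetilde{G}$, $G$ and the adjoint group.} Let $\widetilde{G}$ be the compact simply connected covering group of $G$ and $G_{\mathrm{ad}}=\widetilde{G}/C(\widetilde{G})$ the adjoint group, so that $\widetilde{G}\to G\to G_{\mathrm{ad}}$ are coverings and hence $L(\widetilde{G})\subseteq L(G)\subseteq L(G_{\mathrm{ad}})$. A standard computation with the three-dimensional subgroups attached to the simple roots $\alpha_1,\dots,\alpha_l$ identifies $L(\widetilde{G})$ with the lattice spanned by the coroots and $L(G_{\mathrm{ad}})$ with its dual with respect to $\{\cdot,\cdot\}$; passing to dual lattices in $\mathfrak{t}(\mathbb{R})^{\ast}$ and using the relations (\ref{Eq:dual_basis}) gives $X^{\ast}(T_{\widetilde{G}})=\Lambda_1(\Gamma)$ and $X^{\ast}(T_{G_{\mathrm{ad}}})=\Lambda_0(\Gamma)$. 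Dualizing $L(\widetilde{G})\subseteq L(G)\subseteq L(G_{\mathrm{ad}})$ reverses the inclusions, so $\Lambda_0(\Gamma)\subseteq\Lambda(G)\subseteq\Lambda_1(\Gamma)$ with each term a subgroup of the next; this is (\ref{Eq:pi1zg}).

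\textbf{The quotient identifications.} Write $G=\widetilde{G}/Z$ with $Z\subseteq C(\widetilde{G})$; then $\pi_1(G)\cong Z$, $C(G)=C(\widetilde{G})/Z$, the subgroup $Z$ corresponds to $L(G)$ via $Z\cong L(G)/L(\widetilde{G})$, and $C(\widetilde{G})\cong L(G_{\mathrm{ad}})/L(\widetilde{G})$. For nested full-rank lattices $L_1\subseteq L_2$ in $\mathfrak{t}(\mathbb{R})$, the pairing $(\mu,h)\mapsto\mu(h)\bmod\mathbb{Z}$ induces a perfect pairing between $L_1^{\vee}/L_2^{\vee}$ and $L_2/L_1$, where $L_i^{\vee}$ denotes the dual lattice in $\mathfrak{t}(\mathbb{R})^{\ast}$. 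Taking $L_1=L(\widetilde{G})$, $L_2=L(G)$ gives $\Lambda_1(\Gamma)/\Lambda(G)\cong\operatorname{Hom}(L(G)/L(\widetilde{G}),\mathbb{Q}/\mathbb{Z})\cong\operatorname{Hom}(\pi_1(G),\mathbb{Q}/\mathbb{Z})\cong\pi_1(G)$, and taking $L_1=L(G)$, $L_2=L(G_{\mathrm{ad}})$ gives $\Lambda(G)/\Lambda_0(\Gamma)\cong\operatorname{Hom}(L(G_{\mathrm{ad}})/L(G),\mathbb{Q}/\mathbb{Z})\cong\operatorname{Hom}(C(G),\mathbb{Q}/\mathbb{Z})\cong C(G)$, all groups here being finite abelian.

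\textbf{Realization, uniqueness, and the main difficulty.} Given an additive subgroup $\Lambda$ with $\Lambda_0(\Gamma)\subseteq\Lambda\subseteq\Lambda_1(\Gamma)$, let $\Lambda^{\vee}\subseteq\mathfrak{t}(\mathbb{R})$ be its dual lattice; dualizing reverses inclusions, so $L(\widetilde{G})\subseteq\Lambda^{\vee}\subseteq L(G_{\mathrm{ad}})$. Set $Z:=\Lambda^{\vee}/L(\widetilde{G})\subseteq C(\widetilde{G})$ and $G:=\widetilde{G}/Z$; by the previous paragraph $\Lambda(G)=(\Lambda^{\vee})^{\vee}=\Lambda$. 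Uniqueness holds because a compact connected Lie group with Lie algebra $\mathfrak{g}$ is determined up to isomorphism by the central subgroup $Z\subseteq C(\widetilde{G})$ by which one divides $\widetilde{G}$, and this subgroup is recovered from $\Lambda(G)$ as $\Lambda(G)^{\vee}/L(\widetilde{G})$. I expect the only genuinely delicate point to be the first two steps — the identification $\Lambda(G)=X^{\ast}(T)$ and the determination of $L(\widetilde{G})$ as the coroot lattice — where one must keep careful track of the normalization constants in $\exp$ and in the identification of $\mathfrak{t}(\mathbb{R})$ with $\mathfrak{t}(\mathbb{R})^{\ast}$ via the Killing form; everything after that is the formal duality of nested lattices. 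Since the result is classical — going back to E.~Cartan and H.~Weyl, with the present lattice formulation due to E.~B.~Dynkin and A.~L.~Onishchik — one may alternatively just refer to \cite{Dyn} and \cite{Adams}, the sketch above indicating how those results are obtained.
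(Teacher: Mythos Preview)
The paper does not prove this proposition at all: it is stated as a quotation of known results, introduced with ``Let us present the most important properties of characteristic lattice from work \cite{Dyn} and appendix by A.L.~Onishchik in book \cite{Adams},'' and no argument is given. Your sketch is a correct outline of the classical argument (reduction to the maximal torus, identification of the extreme lattices via the coroot lattice, and the lattice-duality pairing for the quotients), and in fact your final sentence---that one may simply refer to \cite{Dyn} and \cite{Adams}---is exactly what the paper does.
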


Let us consider the family of compact connected simple Lie groups with Lie algebra $\mathfrak{g}$, which has a root system $\Gamma$. In consequence of proposition \ref{Prop:Lambda1} this family,
up to isomorphisms of Lie groups, coincides with the family of all possible lattices of maximal rank, satisfying the relation (\ref{Eq:pi1zg}). Thus, maximal fundamental group
$\Lambda_1(\Gamma)/\Lambda_0(\Gamma)$ plays the main role in classification of simple Lie groups with given (simple) Lie algebra. Let us present all maximal fundamental groups for simple (non-abelian) Lie algebras from appendix by A.L.Onishchik in book \cite{Adams}:
\begin{table}[h]
\begin{center}
\begin{tabular}{|c|c|c|c|c|c|c|}
\hline
Type  $\Gamma$&$A_l$&$B_l, C_l, E_7\;$&$D_{2s}$&$D_{2s+1}$&$E_6$&$E_8, F_4, G_2$\\
\hline
$\Lambda_1(\Gamma)/\Lambda_0(\Gamma)$&$\;\mathbb{Z}_{l+1}$&$\mathbb{Z}_2$&$\mathbb{Z}_2\oplus\mathbb{Z}_2$&$\mathbb{Z}_4$&$\;\mathbb{Z}_3$&$0$\\
\hline
\end{tabular}
\Table\label{Tab:max_gr} Maximal fundamental groups
\end{center}
\end{table}

We get from proposition \ref{Prop:Lambda1} the following

\begin{corollary}\label{Col:LCalc}
Let maximal fundamental group $\Lambda_1(\Gamma)/\Lambda_0(\Gamma)$ of Lie algebra $\mathfrak{g}$ has prime order. Then the family of non-isomorphic compact connected Lie groups with Lie algebra
$\mathfrak{g}$ consists of two groups: simply connected Lie group $G_1$ with the center
$\Lambda_1(\Gamma)/\Lambda_0(\Gamma)$ and weight lattice $\Lambda_1(\Gamma)$, coinciding with weight lattice of Lie algebra $\Lambda(\mathfrak{g})$, and Lie group $G_0$ without center, with fundamental group $\Lambda_1(\Gamma)/\Lambda_0(\Gamma)$ and weight lattice $\Lambda_0(\Gamma)$.
\end{corollary}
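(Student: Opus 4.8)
The plan is to read the statement off directly from Proposition~\ref{Prop:Lambda1}. By that proposition, the assignment $G\mapsto\Lambda(G)$ is a bijection between isomorphism classes of compact connected Lie groups with Lie algebra $\mathfrak{g}$ (equivalently, with root system $\Gamma$) and additive subgroups $\Lambda$ of $\mathfrak{t}(\mathbb{R})^{\ast}$ satisfying $\Lambda_0(\Gamma)\subseteq\Lambda\subseteq\Lambda_1(\Gamma)$, and under this bijection $\Lambda_1(\Gamma)/\Lambda(G)\cong\pi_1(G)$ and $\Lambda(G)/\Lambda_0(\Gamma)\cong C(G)$. So the whole task reduces to enumerating the intermediate lattices $\Lambda$.

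Next I would invoke the elementary correspondence theorem for subgroups: $\Lambda\mapsto\Lambda/\Lambda_0(\Gamma)$ is an inclusion-preserving bijection between the subgroups $\Lambda$ with $\Lambda_0(\Gamma)\subseteq\Lambda\subseteq\Lambda_1(\Gamma)$ and the subgroups of the finite abelian group $\Lambda_1(\Gamma)/\Lambda_0(\Gamma)$. By hypothesis this group has prime order, hence exactly two subgroups, the trivial one and the whole group; therefore there are exactly two intermediate lattices, namely $\Lambda=\Lambda_0(\Gamma)$ and $\Lambda=\Lambda_1(\Gamma)$, and they are distinct because the prime is at least $2$, i.e. $\Lambda_0(\Gamma)\subsetneq\Lambda_1(\Gamma)$.

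Then I would identify the two corresponding Lie groups using the isomorphisms of Proposition~\ref{Prop:Lambda1}. For $\Lambda=\Lambda_1(\Gamma)$ one gets $\pi_1(G_1)\cong\Lambda_1(\Gamma)/\Lambda_1(\Gamma)=0$, so $G_1$ is simply connected, with $C(G_1)\cong\Lambda_1(\Gamma)/\Lambda_0(\Gamma)$ and weight lattice $\Lambda_1(\Gamma)=\Lambda(\mathfrak{g})$. For $\Lambda=\Lambda_0(\Gamma)$ one gets $C(G_0)\cong\Lambda_0(\Gamma)/\Lambda_0(\Gamma)=0$, so $G_0$ has trivial center, with $\pi_1(G_0)\cong\Lambda_1(\Gamma)/\Lambda_0(\Gamma)$ and weight lattice $\Lambda_0(\Gamma)$. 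Since the correspondence of Proposition~\ref{Prop:Lambda1} is injective and the two lattices differ, $G_0$ and $G_1$ are non-isomorphic, which is exactly the asserted description of the family.

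Regarding the main obstacle: there is essentially none of substance, since all the real content is packaged in Proposition~\ref{Prop:Lambda1}, which is quoted from \cite{Dyn} and \cite{Adams}. The only point needing (trivial) care is to confirm that the two extreme lattices are genuinely distinct, and this is immediate from $|\Lambda_1(\Gamma)/\Lambda_0(\Gamma)|$ being a prime, hence $\geq 2$. One may note in passing, consulting the table of maximal fundamental groups, that among rank three simple Lie algebras this corollary applies precisely to the types $B_3$ and $C_3$, where the maximal fundamental group is $\mathbb{Z}_2$.
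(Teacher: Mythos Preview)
Your proof is correct and is exactly the intended argument: the paper derives the corollary directly from Proposition~\ref{Prop:Lambda1} without further comment, and your proposal simply spells out that derivation (bijection with intermediate lattices, correspondence theorem for subgroups of a prime-order group, identification of $\pi_1$ and the center via the proposition). There is nothing to add.
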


We see from description of all irreducible root systems in Tables~I--IX from \cite{Burb} that there are only three irreducible systems of rank three: $A_3 \cong D_3$, $B_3$ и $C_3$. We get from table \ref{Tab:max_gr} above, proposition \ref{Prop:Lambda1}, as well as from the argument after theorem  3 in appendix by A.L.~Onishchuk in book \cite{Adams} and main characterizations of classical matrix Lie groups the following proposition.

\begin{proposition}\label{Prop:list}
The list of Lie groups, corresponding to root systems $A_3$, $B_3$, and $C_3$ looks as follows:
\end{proposition}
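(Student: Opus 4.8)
The plan is to treat the three irreducible rank three root systems separately and, in each case, apply Proposition~\ref{Prop:Lambda1} together with Corollary~\ref{Col:LCalc} and Table~\ref{Tab:max_gr}. By Proposition~\ref{Prop:Lambda1}, up to isomorphism a compact connected simple Lie group with a fixed root system $\Gamma$ is the same thing as an additive subgroup $\Lambda$ of $\frak{t}(\mathbb{R})^{\ast}$ with $\Lambda_0(\Gamma)\subseteq\Lambda\subseteq\Lambda_1(\Gamma)$; hence the enumeration reduces to listing all subgroups of the finite abelian group $\Lambda_1(\Gamma)/\Lambda_0(\Gamma)$ and then naming the corresponding group. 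From Table~\ref{Tab:max_gr} (with $l=3$) we read off $\Lambda_1/\Lambda_0\cong\mathbb{Z}_4$ for $A_3\cong D_3$, and $\Lambda_1/\Lambda_0\cong\mathbb{Z}_2$ for both $B_3$ and $C_3$.

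For $B_3$ and $C_3$ the maximal fundamental group has the prime order $2$, so Corollary~\ref{Col:LCalc} applies directly and gives exactly two non-isomorphic groups in each family: the simply connected group (weight lattice $\Lambda_1(\Gamma)=\Lambda(\frak{g})$, center $\mathbb{Z}_2$) and the centerless group (weight lattice $\Lambda_0(\Gamma)$, fundamental group $\mathbb{Z}_2$). It then remains to identify these with the standard matrix groups: for $B_3$ one obtains $\Spin(7)$ and its quotient $\SO(7)=\Spin(7)/\{\pm 1\}$, and for $C_3$ one obtains the compact symplectic group $\Sp(3)$ and $\PSp(3)=\Sp(3)/\{\pm I\}$. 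This identification is the standard correspondence between Dynkin types and classical matrix groups, using the descriptions of $\Spin(2n+1)$, $\SO(2n+1)$, $\Sp(n)$ and their centers and the argument after Theorem~3 in the appendix by A.\,L.~Onishchik in \cite{Adams}.

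For $A_3\cong D_3$ the group $\mathbb{Z}_4$ has the three subgroups $\{0\}$, $\mathbb{Z}_2$, $\mathbb{Z}_4$, so by Proposition~\ref{Prop:Lambda1} there are exactly three non-isomorphic compact connected simple Lie groups with this root system: the simply connected $\SU(4)$ with center $\mathbb{Z}_4$, the centerless $\PSU(4)$ with fundamental group $\mathbb{Z}_4$, and one intermediate group $\SU(4)/\mathbb{Z}_2$ corresponding to the unique order two subgroup of $C(\SU(4))=\mathbb{Z}_4$. The main technical point — and the only place where a little care is needed — is to recognize this intermediate group: via the exceptional isomorphism $A_3\cong D_3$, i.e. $\SU(4)\cong\Spin(6)$, the order two subgroup of $C(\SU(4))$ is precisely the kernel $\{\pm 1\}$ of the covering $\Spin(6)\to\SO(6)$ (there being a unique such subgroup, this match is forced), so the intermediate group is $\SO(6)$, and the last quotient is $\PSU(4)\cong\SO(6)/\{\pm I\}=\PSO(6)$. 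One checks that the lattice chain $\Lambda_0(\Gamma)\subset\Lambda\subset\Lambda_1(\Gamma)$ for $D_3$ indeed matches the center/fundamental-group data of $\Spin(6)$, $\SO(6)$, $\PSO(6)$ recorded in Table~\ref{Tab:max_gr} and Proposition~\ref{Prop:Lambda1}. Combining the three cases yields the asserted list of seven groups; I expect that everything beyond the $A_3\cong D_3$ bookkeeping is completely forced by Corollary~\ref{Col:LCalc}.
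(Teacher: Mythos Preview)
Your proposal is correct and follows essentially the same route as the paper: the paper's entire justification is the sentence preceding the proposition, which invokes Table~\ref{Tab:max_gr}, Proposition~\ref{Prop:Lambda1}, the argument after Theorem~3 in Onishchik's appendix to \cite{Adams}, and the standard characterizations of the classical matrix groups --- exactly the ingredients you use. Your write-up is in fact more detailed than the paper's, particularly in spelling out the $A_3\cong D_3$ identification of the intermediate group with $\SO(6)$ (which the paper relegates to a later remark), but the underlying argument is the same.
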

\begin{table}[h]
\centering
\begin{tabular}{|p{6cm}|c|c|c|c|c|}
\hline
\centering{Lie group $G$}&$\Gamma$&$\Lambda(G)$&$\pi_1(G)$&$C(G)$&$\dim G$\\
\hline
\centering{$\SU(4)$}&$A_3$&$\Lambda_1$&$0$&$\mathbb{Z}_4$&$15$\\
\hline
\centering{$\SU(4)/(\pm E_4)$} &$A_3$&$\Lambda$, where  $\Lambda_0\subsetneq\Lambda\subsetneq\Lambda_1$&$\mathbb{Z}_2$&$\mathbb{Z}_2$&$15$\\
\hline
\centering{$\PSU(4):=\SU(4)/C(\SU(4))$}&$A_3$&$\Lambda_0$&$\mathbb{Z}_4$&$0$&$15$\\
\hline
\centering{$\Spin(7)$}&$B_3$&$\Lambda_1$&$0$&$\mathbb{Z}_2$&$21$\\
\hline
\centering{$\SO(7)$}&$B_3$&$\Lambda_0$&$\mathbb{Z}_2$&$0$&$21$\\
\hline	
\centering{$\Sp(3)$}&$C_3$&$\Lambda_1$&$0$&$\mathbb{Z}_2$&$21$\\
\hline
\centering{$\PSp(3):=\Sp(3)/C(\Sp(3))$}&$C_3$&$\Lambda_0$&$\mathbb{Z}_2$&$0$&$21$\\
\hline
\end{tabular}
\Table\label{Tab:list} Compact connected simple rank three Lie groups
\end{table}

The usage of above stated properties of characteristic lattice permits to present the calculation algorithm of Laplacian spectra of all Lie groups $(G, \nu)$ with fixed Lie algebra, stated in \cite{Svir3} (see corollary 5), using tables~I--IX from \cite{Burb}, where $\rho$ denotes the vector $\beta$.

\begin{theorem}
\label{alg}
To calculate Laplacian spectra for all compact connected simple Lie groups $G$ with simple Lie algebra $\frak{g}$ with root system $\Gamma$ and biinvariant Riemannian metric $\nu$ satisfying condition $\nu(e)=-\gamma k_{ad}$, where $\gamma>0$, one needs to fulfil the following actions:

1) calculate expression
$b=\langle\tilde{\alpha}+\beta,\tilde{\alpha}+\beta\rangle-\langle\beta,\beta\rangle$,
where $\tilde{\alpha}$ is highest (maximal) root,
assuming that relative to scalar product $\langle\cdot,\cdot\rangle$ (on $\frak{t}(\mathbb{R})$) vectors $\epsilon_i$ from corresponding table in \cite{Burb} are mutually orthogonal and unitary;

2) take scalar product $\left(\cdot,\cdot\right)=\frac{1}{b}\langle\cdot,\cdot\rangle$;

3) find the fundamental weights $\bar{\omega}_1,\dots,\bar{\omega}_l$ of the Lie algebra $\frak{g}$ of the Lie group $G$ (if $\frak{g}$ has rank $l$) by corresponding table from \cite{Burb};

4) for any highest weight $\Lambda\in\Lambda^{+}_1(\Gamma)$, i.e for any
$\Lambda=\sum\limits_{i=1}^{l}\Lambda_{i}\bar{\omega}_i$, where $\Lambda_{i}~\in~\mathbb{Z}$ and
$\Lambda_{i}\geq 0$ for $i=1,\dots,l$, find eigenvalue $\lambda(\Lambda)$ of the Laplace operator, corresponding to highest weight $\Lambda$, by formula
\begin{equation}
\label{lam}
\lambda(\Lambda)=-\frac{1}{\gamma}\Big[\langle\Lambda+\beta,\Lambda+\beta\rangle-\langle\beta,\beta\rangle\Big]
\end{equation}
and dimension $d(\Lambda+\beta)$ of irreducible complex representation of complex span of the Lie algebra $\frak{g}$ with highest weight $\Lambda$ by formula
\begin{equation}
\label{dim}
d(\Lambda+\beta)=\prod\limits_{\alpha\in\Gamma^{+}}\frac{\left(\Lambda+\beta,\alpha\right)}{{\left(\beta,\alpha\right)}};
\end{equation}

5) for any lattice $\Lambda$, satisfying relation
$\Lambda_0(\Gamma)\subseteq\Lambda\subseteq\Lambda_1(\Gamma)$, where $\Lambda_0(\Gamma)$ and
$\Lambda_1(\Gamma)$ are lattices generated by simple roots and fundamental weights, obtained in p.~1) and p.~3) respectively, $G$ is Lie group with Lie algebra $\frak{g}$, corresponding to characteristic lattice $\Lambda=\Lambda(G)$, fulfil the following three actions:

6) find the set of highest weights $\Lambda^{+}(G)=\Lambda(G)\cap\Lambda^{+}_1(\Gamma)$, defining it via fundamental weights $\bar{\omega}_1,\dots,\bar{\omega}_l$;

7) for any highest weight $\Lambda\in\Lambda^{+}(G)$ find from p.~4) eigenvalue
$\lambda(\Lambda)$ and dimension $d(\Lambda+\beta)$ of irreducible complex representation corresponding to the weight $\Lambda$;

8) find multiplicity of any eigenvalue $\lambda=\lambda(\Lambda)$ by formula
\begin{equation}
\label{abc}
\sigma(\lambda)=\sum\limits_{\Lambda:\,\lambda(\Lambda)=\gamma\lambda}\,\prod\limits_{\;\alpha\in\Gamma^{+}}
\left(\frac{\left(\Lambda+\beta,\alpha\right)}{\left(\beta,\alpha\right)}\right)^2,
\end{equation}
obtaining in this way the spectrum $\Spec(G,\nu)$ of the Lie group $G$, corresponding to characteristic lattice $\Lambda(G)$.

Thus, we get all spectra $\Spec(G,\nu)$ of Lie groups $G$ with Lie algebra $\frak{g}$ and metric
$\nu$.
\end{theorem}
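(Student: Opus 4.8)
The plan is to recognize the statement as a systematic repackaging of corollary~5 of \cite{Svir3} (itself built on \cite{BerSvir1} and \cite{BerSvir2}), so that the proof reduces each of the eight steps to an already available fact; the only point needing genuine verification is that the scalar products occurring in the algorithm --- the ``$\epsilon_i$-orthonormal'' form $\langle\cdot,\cdot\rangle$ of step~1), the renormalized form $(\cdot,\cdot)$ of step~2), and the Killing-induced inner product on $\frak{t}(\mathbb{R})^{\ast}$ --- are tied together by the constant $b$, and that the parameter $\gamma$ is carried with the correct power throughout.

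First I would handle the normalization, i.e.\ steps~1)--3). The relevant table of \cite{Burb} supplies the simple roots $\alpha_1,\dots,\alpha_l$, the highest root $\tilde\alpha$, and the half-sum $\beta$ of the positive roots in $\epsilon_i$-coordinates; declaring the $\epsilon_i$ orthonormal defines $\langle\cdot,\cdot\rangle$ and hence the number $b=\langle\tilde\alpha+\beta,\tilde\alpha+\beta\rangle-\langle\beta,\beta\rangle>0$. Because the root system $\Gamma$ is irreducible, the Weyl-invariant inner products on $\frak{t}(\mathbb{R})^{\ast}$ form a one-parameter family, so $(\cdot,\cdot):=b^{-1}\langle\cdot,\cdot\rangle$ is proportional to the Killing-induced inner product; by construction $(\tilde\alpha+\beta,\tilde\alpha+\beta)-(\beta,\beta)=1$, which is exactly the value this quantity must take for the Killing-induced form (the adjoint representation has highest weight $\tilde\alpha$ and, by Proposition~\ref{Prop:spec_ad}, eigenvalue $-1$ when $\gamma=1$), so the two products coincide. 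Thus the metric $\nu(e)=-\gamma k_{\ad}$ is faithfully encoded, and step~3) is the routine inversion of the Cartan matrix of \cite{Burb} through the defining relations~(\ref{Eq:dual_basis}).

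Next comes step~4). By the description of $\Spec(G,\nu)$ recalled in \cite{BerSvir1,BerSvir2}, the matrix coefficients of an irreducible complex representation with highest weight $\Lambda$ are eigenfunctions of $\Delta$ sharing a single eigenvalue, and the Casimir/Freudenthal formula in the normalization fixed above yields exactly~(\ref{lam}), while Weyl's dimension formula yields~(\ref{dim}) --- the latter being insensitive to the overall scale of the inner product. Proposition~\ref{Cartan} ensures that $\Lambda$ runs over all of $\Lambda^{+}_1(\Gamma)$ for the simply connected group, and these eigenfunctions span $L^2(G)$ (Peter--Weyl), so the full spectrum is obtained in that case.

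Finally, steps~5)--8). By Proposition~\ref{Prop:Lambda1} the compact connected groups with Lie algebra $\frak{g}$ correspond bijectively to the lattices $\Lambda$ with $\Lambda_0(\Gamma)\subseteq\Lambda\subseteq\Lambda_1(\Gamma)$, and an irreducible representation of $\frak{g}$ integrates to $G$ exactly when its highest weight lies in $\Lambda(G)$; hence the admissible highest weights are $\Lambda^{+}(G)=\Lambda(G)\cap\Lambda^{+}_1(\Gamma)$, which is step~6). Steps~7)--8) merely reorganize the output of step~4): grouping all highest weights that produce a common eigenvalue and using that the real matrix coefficients of an irreducible complex representation of dimension $d(\Lambda+\beta)$ span an eigenspace of dimension $d(\Lambda+\beta)^2$ --- the passage to the complex case being the improvement of \cite{BerSvir2} over \cite{BerSvir1} that makes this count uniform across all real types --- gives~(\ref{abc}), since $d(\Lambda+\beta)^2=\big(\prod_{\alpha\in\Gamma^{+}}(\Lambda+\beta,\alpha)/(\beta,\alpha)\big)^{2}$. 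I expect the genuinely delicate step to be the first one: tracking the three inner products and the factor $\gamma$ so that $b$ enters in the right direction and (\ref{lam}) comes out with the stated normalization; once that is settled, the remainder is bookkeeping on top of Propositions~\ref{Cartan} and~\ref{Prop:Lambda1} and corollary~5 of \cite{Svir3}.
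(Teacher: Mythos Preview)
Your proposal is correct and matches the paper's treatment: the paper does not prove Theorem~\ref{alg} independently but presents it as the algorithm ``stated in \cite{Svir3} (see corollary~5)'' together with the preceding lattice discussion, which is exactly the reduction you carry out. Your justification of the normalization via Proposition~\ref{Prop:spec_ad} and of steps~5)--8) via Proposition~\ref{Prop:Lambda1} and Peter--Weyl is in fact more detailed than what the paper itself supplies.
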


\begin{remark}
In formulas (\ref{dim}) and (\ref{abc}) applied in p.~4) and p.~8) of theorem \ref{alg} we can use instead of $\left(\cdot,\cdot\right)$ every scalar product proportional to it, in particular,
$\langle\cdot,\cdot\rangle$ from p.~1 of theorem \ref{alg}.
\end{remark}

Using theorem \ref{alg} we shall find in next sections Laplacian spectra of all compact connected simple rank three Lie groups given in table \ref{Tab:list} above.

\section{Calculation of Laplacian spectrum for Lie groups \newline $\SU(4)$, $\SU(4)/(\pm E_4),$ and $\SU(4)/C(\SU(4))$}\label{Sec:A3}

\begin{remark}
Notice that $C(\SU(4))\subset \SU(4)$ is cyclic subgroup of $4$-th order generated by matrix $iE_4$, where $E_4$ is unit $(4\times 4)$-matrix. Isomorphism $A_3\cong D_3$ implies isomorphisms of Lie groups $\SU(4)\cong \Spin(6)$, $\SU(4)/(\pm E_4)\cong \SO(6)$,
$\SU(4)/C(SU(4))\cong \SO(6)/(\pm E_6).$
\end{remark}

By table \ref{Tab:list}, to Lie groups under consideration corresponds the root system $A_3$. We apply table~I from \cite{Burb}. Simple roots are
$\alpha_1=\varepsilon_1-\varepsilon_2,\,\alpha_2=\varepsilon_2-\varepsilon_3,\,\alpha_3=\varepsilon_3-\varepsilon_4;$
maximal root is $\tilde{\alpha}=\alpha_1+\alpha_2+\alpha_3=\varepsilon_1-\varepsilon_4$.
Positive roots are $\varepsilon_1-\varepsilon_2$, $\varepsilon_2-\varepsilon_3$, $\varepsilon_3-\varepsilon_4$, $\varepsilon_1-\varepsilon_3$,
$\varepsilon_2-\varepsilon_4$, $\varepsilon_1-\varepsilon_4$.
The sum of positive roots is equal to
$2\beta=3\varepsilon_1+\varepsilon_2-\varepsilon_3-3\varepsilon_4$, whence
\begin{equation}
\label{beta1}
\beta=\frac{3\varepsilon_1+\varepsilon_2-\varepsilon_3-3\varepsilon_4}{2},\quad \tilde{\alpha}+\beta=\frac{5\varepsilon_1+\varepsilon_2-\varepsilon_3-5\varepsilon_4}{2}.
\end{equation}

We act according to algorithm suggested in theorem \ref{alg}.

1) $b=\langle\tilde{\alpha}+\beta,\tilde{\alpha}+\beta\rangle-\langle\beta,\beta\rangle=13-5=8.$

2) $\left(\cdot,\cdot\right)=\frac{1}{8}\langle\cdot,\cdot\rangle.$

3) Fundamental weights have the following form
$$\bar{\omega}_1=\frac{3\varepsilon_1-\varepsilon_2-\varepsilon_3-\varepsilon_4}{4},\,\,
\bar{\omega}_2=\frac{\varepsilon_1+\varepsilon_2-\varepsilon_3-\varepsilon_4}{2},\,\,
\bar{\omega}_3=\frac{\varepsilon_1+\varepsilon_2+\varepsilon_3-3\varepsilon_4}{4}.$$

It is easy to see that
$$\alpha_1=2\bar{\omega}_1-\bar{\omega}_2,\quad\alpha_2=2\bar{\omega}_2-\bar{\omega}_1-\bar{\omega}_3,\quad
\alpha_3=2\bar{\omega}_3-\bar{\omega}_2,\quad\tilde{\alpha}=\bar{\omega}_1+\bar{\omega}_3,\quad\beta=\bar{\omega}_1+\bar{\omega}_2+\bar{\omega}_3.$$

4) Let
$\Lambda=\sum\limits_{i=1}^{3}\Lambda_i\bar{\omega}_i,$ where $\Lambda_i\in\mathbb{Z}$,
$\Lambda_i\geq 0$, $i=1,2,3$. Then
$$\Lambda+\beta=\sum\limits_{i=1}^{3}(\Lambda_i+1)\bar{\omega}_i=
\sum\limits_{i=1}^{3}\nu_i\bar{\omega}_i=$$
\begin{equation}
\label{lam1}
=\frac{1}{4}\left[(3\nu_1+2\nu_2+\nu_3)\varepsilon_1+(-\nu_1+2\nu_2+\nu_3)\varepsilon_2+
(-\nu_1-2\nu_2+\nu_3)\varepsilon_3-(\nu_1+2\nu_2+3\nu_3)\varepsilon_4\right],
\end{equation}
where
$$\nu_i=\Lambda_i+1,\quad\nu_i\in\mathbb{N},\quad i=1,2,3.$$

By formula (\ref{lam}), eigenvalue $\lambda(\Lambda),$ corresponding to highest weight
$\Lambda,$ is equal to
$$\lambda(\Lambda)=-\frac{1}{8\gamma}[\langle\Lambda+\beta,\Lambda+\beta\rangle-\langle\beta,\beta\rangle]=$$
$$=-\frac{1}{128\gamma}\left((3\nu_1+2\nu_2+\nu_3)^2+(-\nu_1+2\nu_2+\nu_3)^2+(-\nu_1-2\nu_2+\nu_3)^2+(\nu_1+2\nu_2+3\nu_3)^2-80\right).$$
After routine calculations we get that
\begin{equation}
\label{a1}
\lambda(\Lambda)=-\frac{1}{32\gamma} \left((\nu_1+2\nu_2+\nu_3)^2+2\nu_1^2+2\nu_3^2-20\right).
\end{equation}

Calculation by formula (\ref{dim}) of dimension $d(\Lambda+\beta)$ of representation
$\rho(\Lambda)$ corresponding to highest weight $\Lambda$ gives
$$d(\Lambda+\beta)=\frac{(\Lambda+\beta,\varepsilon_1-\varepsilon_2)}{(\beta,\varepsilon_1-\varepsilon_2)}\cdot
\frac{(\Lambda+\beta,\varepsilon_2-\varepsilon_3)}{(\beta,\varepsilon_2-\varepsilon_3)}\cdot
\frac{(\Lambda+\beta,\varepsilon_3-\varepsilon_4)}{(\beta,\varepsilon_3-\varepsilon_4)}\times$$
$$\times\frac{(\Lambda+\beta,\varepsilon_1-\varepsilon_3)}{(\beta,\varepsilon_1-\varepsilon_3)}
\cdot\frac{(\Lambda+\beta,\varepsilon_2-\varepsilon_4)}{(\beta,\varepsilon_2-\varepsilon_4)}\cdot\frac{(\Lambda+\beta,\varepsilon_1-\varepsilon_4)}{(\beta,\varepsilon_1-\varepsilon_4)}.$$
On the ground of (\ref{beta1}) and (\ref{lam1}),
$$\frac{(\Lambda+\beta,\varepsilon_1-\varepsilon_2)}{(\beta,\varepsilon_1-\varepsilon_2)}=\nu_1,\quad
\frac{(\Lambda+\beta,\varepsilon_2-\varepsilon_3)}{(\beta,\varepsilon_2-\varepsilon_3)}=\nu_2,\quad
\frac{(\Lambda+\beta,\varepsilon_3-\varepsilon_4)}{(\beta,\varepsilon_3-\varepsilon_4)}=\nu_3,$$
$$\frac{(\Lambda+\beta,\varepsilon_1-\varepsilon_3)}{(\beta,\varepsilon_1-\varepsilon_3)}=\frac{\nu_1+\nu_2}{2},\quad
\frac{(\Lambda+\beta,\varepsilon_2-\varepsilon_4)}{(\beta,\varepsilon_2-\varepsilon_4)}=\frac{\nu_2+\nu_3}{2},\quad
\frac{(\Lambda+\beta,\varepsilon_1-\varepsilon_4)}{(\beta,\varepsilon_1-\varepsilon_4)}=\frac{\nu_1+\nu_2+\nu_3}{3}.$$
Therefore
\begin{equation}
\label{a2}
d(\Lambda+\beta)=\frac{1}{12}\;\nu_1\nu_2\nu_3(\nu_1+\nu_2)(\nu_2+\nu_3)(\nu_1+\nu_2+\nu_3).
\end{equation}

\begin{remark}
It follows from (\ref{a1}) and (\ref{a2}) that $\lambda(\Lambda)=0 \Leftrightarrow \Lambda = 0\Leftrightarrow\nu_i=1$, $i=1,2,3$,
and hence, the multiplicity $\sigma(\lambda(0))=d^2(0+\beta)=1$, what corresponds to formula  (\ref{Eq:spec}).
In addition, on the ground of proposition \ref{Prop:spec_ad}, the highest weight of the adjoined representation $\Ad$ is the maximal root
$\tilde{\alpha}=\alpha_1+\alpha_2+\alpha_3=\bar{\omega}_1+\bar{\omega}_3$, for which
$\nu_1 = \nu_3 = 2$ и $\nu_2 = 1$. Therefore eigenvalue and dimension of representation $\Ad$ can be calculated as follows:
$$\lambda_{\Ad}=\lambda(\tilde{\alpha})= -\frac{1}{32\gamma}\left(36+8+8-20\right) = -\frac{1}{\gamma},$$
$$\dim\Ad=d(\tilde{\alpha}+\beta)=\frac{2\cdot 2\cdot3\cdot3\cdot5}{12}=15,$$
what corresponds to formula (\ref{Eq:spec_ad}) for $\gamma=1$ with taking into account the equality $\dim \SU(4)=15$ from Table \ref{Tab:list}.
\end{remark}

5) Simple roots and fundamental weights indicated above generate respective lattices
$\Lambda_0(A_3)$ and $\Lambda_1(A_3)$, i.e.
\begin{equation}
\label{a3}
\Lambda_0(A_3)=\left\{\sum\limits_{i=1}^{3}\Psi_i\alpha_i\,\Bigm|\,\Psi_1,\Psi_2,\Psi_3\in\mathbb{Z}\right\},\;\;
\Lambda_1(A_3)=\left\{\sum\limits_{i=1}^{3}\Lambda_i\bar{\omega}_i\,\Bigm|\,\Lambda_1,\Lambda_2,\Lambda_3\in\mathbb{Z}\right\}.
\end{equation}

After expressing roots via fundamental weights
$$\alpha_1=2\bar{\omega}_1-\bar{\omega}_2,\quad \alpha_2=-\bar{\omega}_1+2\bar{\omega}_2-\bar{\omega}_3,\quad \alpha_3=-\bar{\omega}_2+2\bar{\omega}_3$$
and the change of variables
$$\left\{
\begin{array}{l}
\Psi_1=\Omega_1+\Omega_3, \\
\Psi_2=\Omega_1+2\Omega_3, \\
\Psi_3=\Omega_1-\Omega_2+3\Omega_3
\end{array}\right.\quad\Leftrightarrow\quad
\left\{
\begin{array}{l}
\Omega_1=2\Psi_1-\Psi_2, \\
\Omega_2=-\Psi_1+2\Psi_2-\Psi_3, \\
\Omega_3=-\Psi_1+\Psi_2
\end{array}\right.$$
the lattice $\Lambda_0(A_3)$ takes the following view
\begin{equation}
\label{a4}
\Lambda_0(A_3)=\{\Omega_1(\bar{\omega}_1+\bar{\omega}_3)+\Omega_2(\bar{\omega}_2-2\bar{\omega}_3)+4\Omega_3\bar{\omega}_3\,\,\mid\,\Omega_1,\Omega_2,\Omega_3\in\mathbb{Z}\}.
\end{equation}
Also under the change of variables
$$ \Lambda_3^{'} = \Lambda_3 + 2\Lambda_2 - \Lambda_1 \quad\Leftrightarrow\quad \Lambda_3 = \Lambda_1 - 2\Lambda_2 + \Lambda_3^{'} $$
the lattice $\Lambda_1(A_3)$ takes the following form
\begin{equation}
\label{a5}
\Lambda_1(A_3)=\{\Lambda_1(\bar{\omega}_1+\bar{\omega}_3)+\Lambda_2(\bar{\omega}_2-2\bar{\omega}_3)+\Lambda_3^{'}\bar{\omega}_3\,\,\mid\,\Lambda_1,\Lambda_2,\Lambda_3^{'}\in\mathbb{Z}\}.
\end{equation}

It follows from (\ref{a4}) and (\ref{a5}) that $\Lambda_0(A_3)\subset\Lambda_1(A_3)$ and
$\Lambda_1(A_3)/\Lambda_0(A_3)\cong\mathbb{Z}_4$ and there exists a lattice $\Lambda(A_3)$, for which $\Lambda_0(A_3)\subsetneq\Lambda(A_3)\subsetneq\Lambda_1(A_3)$, of the following form
\begin{equation}
\label{a6}
\Lambda(A_3)=\{\Omega_1(\bar{\omega}_1+\bar{\omega}_3)+\Omega_2(\bar{\omega}_2-2\bar{\omega}_3)+2\Omega_3\bar{\omega}_3\,\,\mid\,\Omega_1,\Omega_2,\Omega_3\in\mathbb{Z}\}.
\end{equation}
Lie groups, corresponding to these characteristic lattices, are given in table \ref{Tab:list}.
The sets of highest weights, corresponding to these lattices, are depicted on figure \ref{fig:lat_a} above.
\begin{figure}
\includegraphics[width=0.9\textwidth]{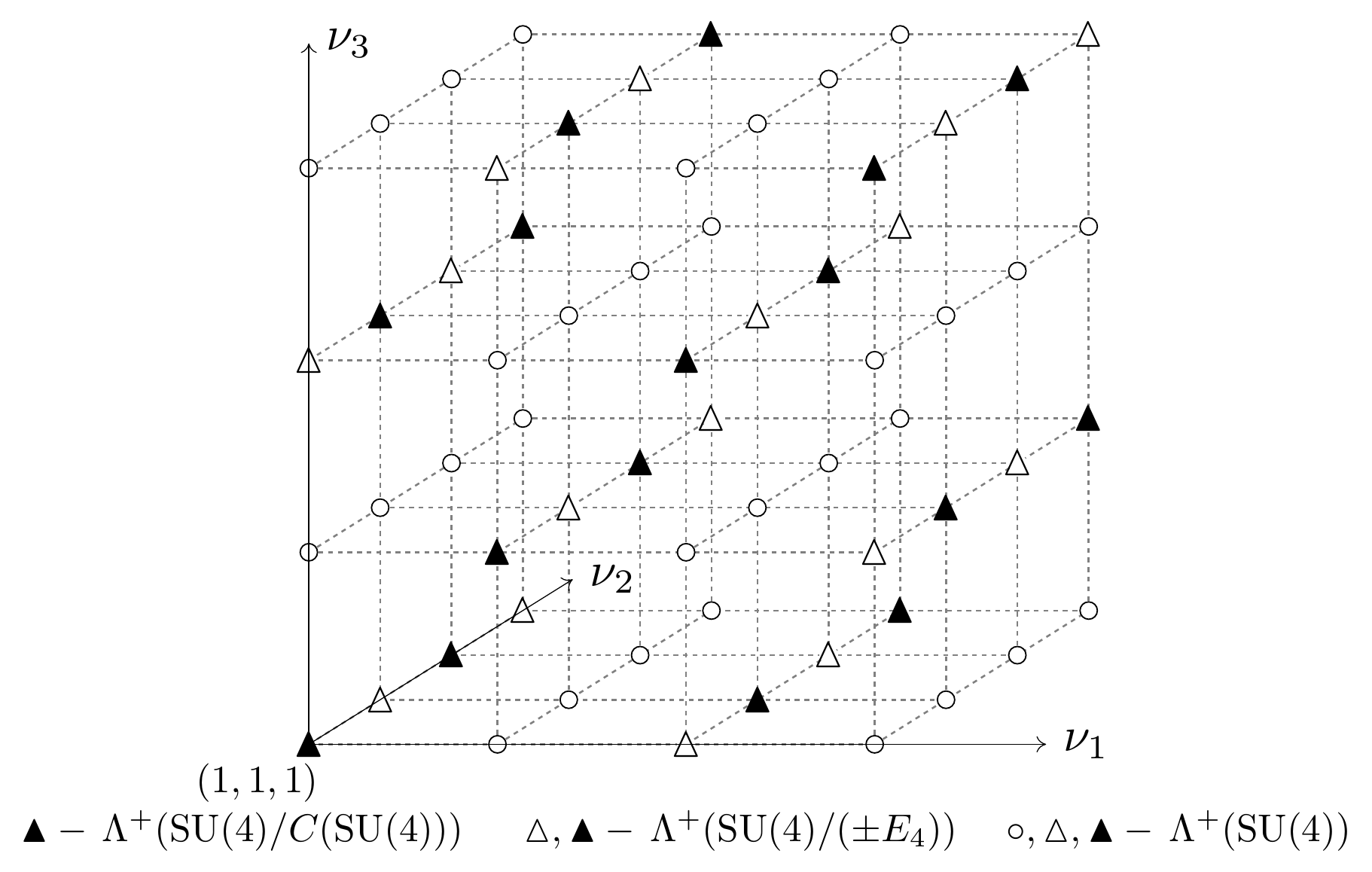}
\caption{The sets of highest weights of Lie groups, corresponding to the root system $A_3$}
\label{fig:lat_a}
\end{figure}

a) Let us present formulas for Laplacian spectrum of the Lie group $\SU(4)$.

6a) By formula (\ref{a3}) we get the following set of highest weights of Lie group $\SU(4)$
$$\Lambda^{+}(\SU(4))=\Lambda_1^{+}(A_3)=\left\{\sum\limits_{i=1}^{3}\Lambda_i\bar{\omega}_i\,\Bigm|\,\Lambda_i\in\mathbb{Z},\,\,\Lambda_i\geq 0,\,\,i=1,2,3\right\}.$$

7a) Set $\Lambda=\sum\limits_{i=1}^{3}\Lambda_i\bar{\omega}_i\in\Lambda^{+}(\SU(4))$, then by p.~4) eigenvalue $\lambda(\Lambda)$ and dimension $d(\Lambda+\beta)$ are calculated by formulas (\ref{a1}) and (\ref{a2}) respectively, where $\nu_i=\Lambda_i+1$, $\nu_i\in\mathbb{N}$, $i=1,2,3.$

8a) Applying formula (\ref{abc}) and results of preceding point, we get the following multiplicity of eigenvalue $\lambda(\Lambda)$
$$\sigma(\Lambda)=\frac{1}{144}\sum_{\;\Xi_1}
\nu_1^2\nu_2^2\nu_3^2(\nu_1+\nu_2)^2(\nu_2+\nu_3)^2(\nu_1+\nu_2+\nu_3)^2,$$
where
\begin{equation}
\label{xi1}
\Xi_1=\{(\nu_1,\nu_2,\nu_3)\in\mathbb{N}^3\,\mid\,(\nu_1+2\nu_2+\nu_3)^2+2\nu_1^2+2\nu_3^2=20-32\gamma\lambda\}.
\end{equation}

The least by modulus non-zero eigenvalue of Laplacian is equal to $-\frac{15}{32\gamma}$ and corresponds to irreducible complex representations of the Lie group $\SU(4)$ with highest weights  $\bar{\omega}_1$ and $\bar{\omega}_3$. Dimensions of these representations are equal to $4$. Therefore the multiplicity of the eigenvalue $-\frac{15}{32\gamma}$ is equal to $4^2+4^2=32$.

b) Let us give formulas for Laplacian spectrum of the Lie group $\SU(4)/(\pm E_4)$, where $E_4$
is unit matrix of the fourth order.

6b) By formula (\ref{a6}), we get the following set of highest weights of the Lie group
$\SU(4)/(\pm E_4)$
\begin{equation*}
\begin{array}{r c l}
\Lambda^{+}(\SU(4)/Z_2)&=&\Lambda(A_3)\cap\Lambda^{+}_1(A_3)=\{\Omega_1\bar{\omega}_1+\Omega_2\bar{\omega}_2+(\Omega_1-2\Omega_2+2\Omega_3)\bar{\omega}_3\mid\\
&&\Omega_1,\Omega_2,\Omega_3\in\mathbb{Z},\,\Omega_1\geq 0,\,\Omega_2\geq 0,\,\Omega_1-2\Omega_2+2\Omega_3\geq 0\}.
\end{array}
\end{equation*}

7b) Set $\Lambda=\Omega_1\bar{\omega}_1+\Omega_2\bar{\omega}_2+(\Omega_1-2\Omega_2+2\Omega_3)\bar{\omega}_3\in\Lambda^{+}(\SU(4)/(\pm E_4))$, then by p.~4) eigenvalue $\lambda(\Lambda)$ and dimension $d(\Lambda+\beta)$ are calculated by formulas (\ref{a1}) and (\ref{a2}) respectively, where
$\nu_1=\Omega_1+1$, $\nu_2=\Omega_2+1$, $\nu_3=\Omega_1-2\Omega_2+2\Omega_3+1$, $\nu_i\in\mathbb{N}$, $i=1,2,3,$ $\nu_3\equiv \nu_1(\Mod 2)$.

8b) Applying formula (\ref{abc}) and results of preceding point, we get the following multiplicity of the eigenvalue $\lambda(\Lambda)$
$$\sigma(\Lambda)=\frac{1}{144}\sum_{\;\Xi_2}
\nu_1^2\nu_2^2\nu_3^2(\nu_1+\nu_2)^2(\nu_2+\nu_3)^2(\nu_1+\nu_2+\nu_3)^2,$$
where
\begin{equation}
\label{xi2}
\Xi_2=\{(\nu_1,\nu_2,\nu_3)\in\mathbb{N}^3\mid\,(\nu_1+2\nu_2+\nu_3)^2+2\nu_1^2+2\nu_3^2=20-32\gamma\lambda,\,\,\nu_3\equiv \nu_1(\Mod 2)\}.
\end{equation}

The least by modulus non-zero eigenvalue of Laplacian is equal to $-\frac{5}{8\gamma}$ and corresponds to irreducible complex representation of the Lie group $\SU(4)/(\pm E_4)$ with highest weight $\bar{\omega}_2$. The dimension of this representation is equal to $6$. Then the multiplicity of the eigenvalue $-\frac{5}{8\gamma}$ is equal to $6^2=36$.

c) Let us give formulas for Laplacian spectrum of the Lie group $\SU(4)/C(\SU(4))$.

6c) By formula (\ref{a4}), we get the set of highest weights of the Lie group $\SU(4)/C(\SU(4))$\begin{equation*}
\begin{array}{r c l}
\Lambda^{+}(\SU(4)/C(\SU(4)))&=&\Lambda_0(A_3)\cap\Lambda^{+}_1(A_3)=\{\Omega_1\bar{\omega}_1+\Omega_2\bar{\omega}_2+(\Omega_1-2\Omega_2+4\Omega_3)\bar{\omega}_3\mid\\
&&\Omega_1,\Omega_2,\Omega_3\in\mathbb{Z},\,\Omega_1\geq 0,\,\Omega_2\geq 0,\,\Omega_1-2\Omega_2+4\Omega_3\geq 0\}.
\end{array}
\end{equation*}

7c) Set $\Lambda=\Omega_1\bar{\omega}_1+\Omega_2\bar{\omega}_2+(\Omega_1-2\Omega_2+4\Omega_3)\bar{\omega}_3\in\Lambda^{+}(\SU(4)/C(\SU(4)))$, then by p.~4) eigenvalue $\lambda(\Lambda)$ and
dimension $d(\Lambda+\beta)$ are calculated by formulas (\ref{a1}) and (\ref{a2}) respectively, where $\nu_1=\Omega_1+1$, $\nu_2=\Omega_2+1$, $\nu_3=\Omega_1-2\Omega_2+4\Omega_3+1$, $\nu_i\in\mathbb{N}$, $i=1,2,3,$ $\nu_3-\nu_1+2\nu_2\equiv 2(\Mod 4)$.

8c) Applying formula (\ref{abc}) and results of preceding point, we get the following multiplicity of the eigenvalue $\lambda(\Lambda)$
$$\sigma(\Lambda)=\frac{1}{144}\sum_{\;\Xi_3}
\nu_1^2\nu_2^2\nu_3^2(\nu_1+\nu_2)^2(\nu_2+\nu_3)^2(\nu_1+\nu_2+\nu_3)^2,$$
where
\begin{equation}
\label{xi3}
\Xi_3=\{(\nu_1,\nu_2,\nu_3)\in\mathbb{N}^3\mid\,(\nu_1+2\nu_2+\nu_3)^2+2\nu_1^2+2\nu_3^2=20-32\gamma\lambda,\,\,
\nu_3-\nu_1+2\nu_2\equiv 2(\Mod 4)\}.
 \end{equation}

The least by modulus non-zero eigenvalue of Laplacian is equal to $-\frac{1}{\gamma}$ and corresponds to irreducible complex representation of the Lie group $\SU(4)/C(\SU(4))$ with highest weight $\bar{\omega}_1+\bar{\omega}_3$. The dimension of this representation is equal to $15$. Consequently the multiplicity of the eigenvalue $-\frac{1}{\gamma}$ is equal to $15^2=225$.

\section{Calculation of Laplacian spectra for Lie groups $\Spin(7)$ and $\SO(7)$}\label{Sec:B3}

By Table \ref{Tab:list}, to Lie groups under consideration corresponds the root system $B_3$. Let us apply table~II from \cite{Burb}. Simple roots are
$\alpha_1=\varepsilon_1-\varepsilon_2,\,\,\alpha_2=\varepsilon_2-\varepsilon_3,\,\,\alpha_3=\varepsilon_3$; the maximal root is
$\tilde{\alpha}=\alpha_1+2\alpha_2+2\alpha_3=\varepsilon_1+\varepsilon_2$.
Positive roots are: $\varepsilon_i$, $i=1,2,3$; $\varepsilon_1\pm\varepsilon_2$, $\varepsilon_1\pm\varepsilon_3$, $\varepsilon_2\pm\varepsilon_3$.
The sum of positive roots is equal to
$2\beta=5\varepsilon_1+3\varepsilon_2+\varepsilon_3$, whence
\begin{equation}
\label{k2}
\beta=\frac{5\varepsilon_1+3\varepsilon_2+\varepsilon_3}{2},\quad \tilde{\alpha}+\beta=\frac{7\varepsilon_1+5\varepsilon_2+\varepsilon_3}{2}.
\end{equation}

We act according to algorithm given in theorem \ref{alg}.

1) $b=\langle\tilde{\alpha}+\beta,\tilde{\alpha}+\beta\rangle-\langle\beta,\beta\rangle=\frac{75}{4}-\frac{35}{4}=10.$

2) $\left(\cdot,\cdot\right)=\frac{1}{10}\langle\cdot,\cdot\rangle.$

3) Fundamental weights have the form
$$\bar{\omega}_1=\varepsilon_1,\quad
\bar{\omega}_2=\varepsilon_1+\varepsilon_2,\quad
\bar{\omega}_3=\frac{\varepsilon_1+\varepsilon_2+\varepsilon_3}{2}.$$
It is easy to see that
$$\tilde{\alpha}=\bar{\omega}_2,\quad\beta=\bar{\omega}_1+\bar{\omega}_2+\bar{\omega}_3.$$

4) Set
$\Lambda=\sum\limits_{i=1}^{3}\Lambda_i\bar{\omega}_i,$ where $\Lambda_i\in\mathbb{Z}$,
$\Lambda_i\geq 0$, $i=1,2,3$. Then
\begin{equation}
\label{k3}
\Lambda+\beta=\sum_{i=1}^{3}(\Lambda_i+1)\bar{\omega}_i=
\sum_{i=1}^{3}\nu_i\bar{\omega}_i
=\frac{\varepsilon_1}{2}\left(2\nu_1+2\nu_2+\nu_3\right)+
\frac{\varepsilon_2}{2}\left(2\nu_2+\nu_3\right)+\frac{\varepsilon_3}{2}\nu_3,
\end{equation}
where $\nu_i=\Lambda_i+1$, $\nu_i\in\mathbb{N}$, $i=1,2,3.$

By formula (\ref{lam}), eigenvalue $\lambda(\Lambda),$ corresponding to highest weight
$\Lambda,$ is equal to
\begin{equation}
\label{b1}
\lambda(\Lambda)=-\frac{1}{10\gamma}[\langle\Lambda+\beta,\Lambda+\beta\rangle-\langle\beta,\beta\rangle]
=-\frac{1}{40\gamma}\left((2\nu_1+2\nu_2+\nu_3)^2+(2\nu_2+\nu_3)^2+\nu_3^2-35\right).
\end{equation}

By formula (\ref{dim}), dimension $d(\Lambda+\beta)$ of representation
$\rho(\Lambda),$ correspomding to highest weight $\Lambda,$ is equal to
$$d(\Lambda+\beta)=\frac{(\Lambda+\beta,\varepsilon_1)}{(\beta,\varepsilon_1)}\cdot\frac{(\Lambda+\beta,\varepsilon_2)}{(\beta,\varepsilon_2)}\cdot
\frac{(\Lambda+\beta,\varepsilon_3)}{(\beta,\varepsilon_3)}\cdot\frac{(\Lambda+\beta,\varepsilon_1-\varepsilon_2)}{(\beta,\varepsilon_1-\varepsilon_2)}\cdot\frac{(\Lambda+\beta,\varepsilon_1-\varepsilon_3)}{(\beta,\varepsilon_1-\varepsilon_3)}\times$$
$$\times\frac{(\Lambda+\beta,\varepsilon_2-\varepsilon_3)}{(\beta,\varepsilon_2-\varepsilon_3}\cdot
\frac{(\Lambda+\beta,\varepsilon_1+\varepsilon_2)}{(\beta,\varepsilon_1+\varepsilon_2)}
\cdot\frac{(\Lambda+\beta,\varepsilon_1+\varepsilon_3)}{(\beta,\varepsilon_1+\varepsilon_3)}
\cdot\frac{(\Lambda+\beta,\varepsilon_2+\varepsilon_3)}{(\beta,\varepsilon_2+\varepsilon_3)}.$$

On the ground of (\ref{k2}) and (\ref{k3}),
$$\frac{(\Lambda+\beta,\varepsilon_1)}{(\beta,\varepsilon_1)}=
\frac{2\nu_1+2\nu_2+\nu_3}{5},\quad
\frac{(\Lambda+\beta,\varepsilon_2)}{(\beta,\varepsilon_2)}=
\frac{2\nu_2+\nu_3}{3},$$
$$\frac{(\Lambda+\beta,\varepsilon_3)}{(\beta,\varepsilon_3)}=
\nu_3,\quad
\frac{(\Lambda+\beta,\varepsilon_1-\varepsilon_2)}{(\beta,\varepsilon_1-\varepsilon_2)}=
\nu_1,\quad
\frac{(\Lambda+\beta,\varepsilon_1-\varepsilon_3)}{(\beta,\varepsilon_1-\varepsilon_3)}=
\frac{\nu_1+\nu_2}{2},$$
$$\frac{(\Lambda+\beta,\varepsilon_2-\varepsilon_3)}{(\beta,\varepsilon_2-\varepsilon_3)}=
\nu_2,\quad
\frac{(\Lambda+\beta,\varepsilon_1+\varepsilon_2)}{(\beta,\varepsilon_1+\varepsilon_2)}=
\frac{\nu_1+2\nu_2+\nu_3}{4},$$
$$\frac{(\Lambda+\beta,\varepsilon_1+\varepsilon_3)}{(\beta,\varepsilon_1+\varepsilon_3)}=
\frac{\nu_1+\nu_2+\nu_3}{3},\quad
\frac{(\Lambda+\beta,\varepsilon_2+\varepsilon_3)}{(\beta,\varepsilon_2+\varepsilon_3)}=
\frac{\nu_2+\nu_3}{2}.$$
Consequently
\begin{equation}
\label{b2}
d(\Lambda+\beta)=\frac{1}{720}\nu_1\nu_2\nu_3(\nu_1+\nu_2)(\nu_2+\nu_3)(2\nu_2+\nu_3)(\nu_1+\nu_2+\nu_3)(\nu_1+2\nu_2+\nu_3)(2\nu_1+2\nu_2+\nu_3).
\end{equation}

\begin{remark}
It follows from (\ref{b1}) and (\ref{b2}) that
$\lambda(\Lambda)=0 \Leftrightarrow \Lambda = 0\Leftrightarrow\nu_i=1$, $i=1,2,3$ and hence the multiplicity $\sigma(\lambda_0)=d^2(0+\beta)=1$, what corresponds to formula (\ref{Eq:spec}).
Moreover, on the base of proposition \ref{Prop:spec_ad} the highest weight of the adjoined representation $\Ad$ is the maximal root
$\tilde{\alpha}=\alpha_1+2\alpha_2+2\alpha_3=\bar{\omega}_2$,for whom $\nu_1 = \nu_3 = 1$ and
$\nu_2 = 2$. Therefore eigenvalue and dimension of representation $\Ad$ can be calculated as follows:
$$\lambda_{\Ad}=\lambda(\tilde{\alpha})= -\frac{1}{40\gamma}\left(49+25+1-35\right)= -\frac{1}{\gamma},$$
$$\dim\Ad=d(\tilde{\alpha}+\beta)=\frac{2\cdot 3\cdot 3\cdot 5\cdot 4\cdot 6 \cdot 7}{720}=21,$$
that corresponds to formula (\ref{Eq:spec_ad}) for $\gamma=1$ with taking into account the equality  $\dim \Spin(7)=21$ from Table \ref{Tab:list}.
\end{remark}

5) Simple roots and fundamental weights of the Lie algebra $\mathfrak{so}(7)$ indicated above assign respective lattices $\Lambda_0(B_3)$ and $\Lambda_1(B_3)$ of the form
\begin{equation}
\label{b3}
\Lambda_0(B_3)=\left\{\sum\limits_{i=1}^{3}\Psi_i\alpha_i\,\mid\,\Psi_1,\Psi_2,\Psi_3\in\mathbb{Z}\right\},\quad
\Lambda_1(B_3)=\left\{\sum\limits_{i=1}^{3}\Lambda_i\bar{\omega}_i\,\mid\,\Lambda_1,\Lambda_2,\Lambda_3\in\mathbb{Z}\right\}.
\end{equation}

After expressing roots via fundamental weights
$$\alpha_1=2\bar{\omega}_1-\bar{\omega}_2,\quad \alpha_2=-\bar{\omega}_1+2\bar{\omega}_2-2\bar{\omega}_3,\quad \alpha_3=-\bar{\omega}_2+2\bar{\omega}_3$$
and the change of variables
$$\left\{
\begin{array}{l}
\Psi_1=\Omega_1+\Omega_2+\Omega_3, \\
\Psi_2=\Omega_1+2\Omega_2+2\Omega_3, \\
\Psi_3=\Omega_1+2\Omega_2+3\Omega_3
\end{array}\right.\quad\Leftrightarrow\quad
\left\{
\begin{array}{l}
\Omega_1=2\Psi_1-\Psi_2, \\
\Omega_2=-\Psi_1+2\Psi_2-\Psi_3, \\
\Omega_3=-\Psi_2+\Psi_3
\end{array}\right.$$
the lattice $\Lambda_0(B_3)$ takes the following form
\begin{equation}
\label{b4}
\Lambda_0(B_3)=\{\Omega_1\bar{\omega}_1+\Omega_2\bar{\omega}_2+2\Omega_3\bar{\omega}_3\,\,\mid\,\Omega_1,\Omega_2,\Omega_3\in\mathbb{Z}\}.
\end{equation}

It follows from (\ref{b3}) and (\ref{b4}) that $\Lambda_0(B_3)\subset\Lambda_1(B_3)$ and
$\Lambda_1(B_3)/\Lambda_0(B_3)\cong\mathbb{Z}_2$, i.e. it has prime order. Hence on the ground of corollary \ref{Col:LCalc} there is no other lattices. Lie groups, associated with these characteristic lattices, are presented in table \ref{Tab:list}.
The sets of highest weights, corresponding to these lattices, are depicted on figure \ref{fig:lat_b} above.
\begin{figure}
\includegraphics[width=0.55\textwidth]{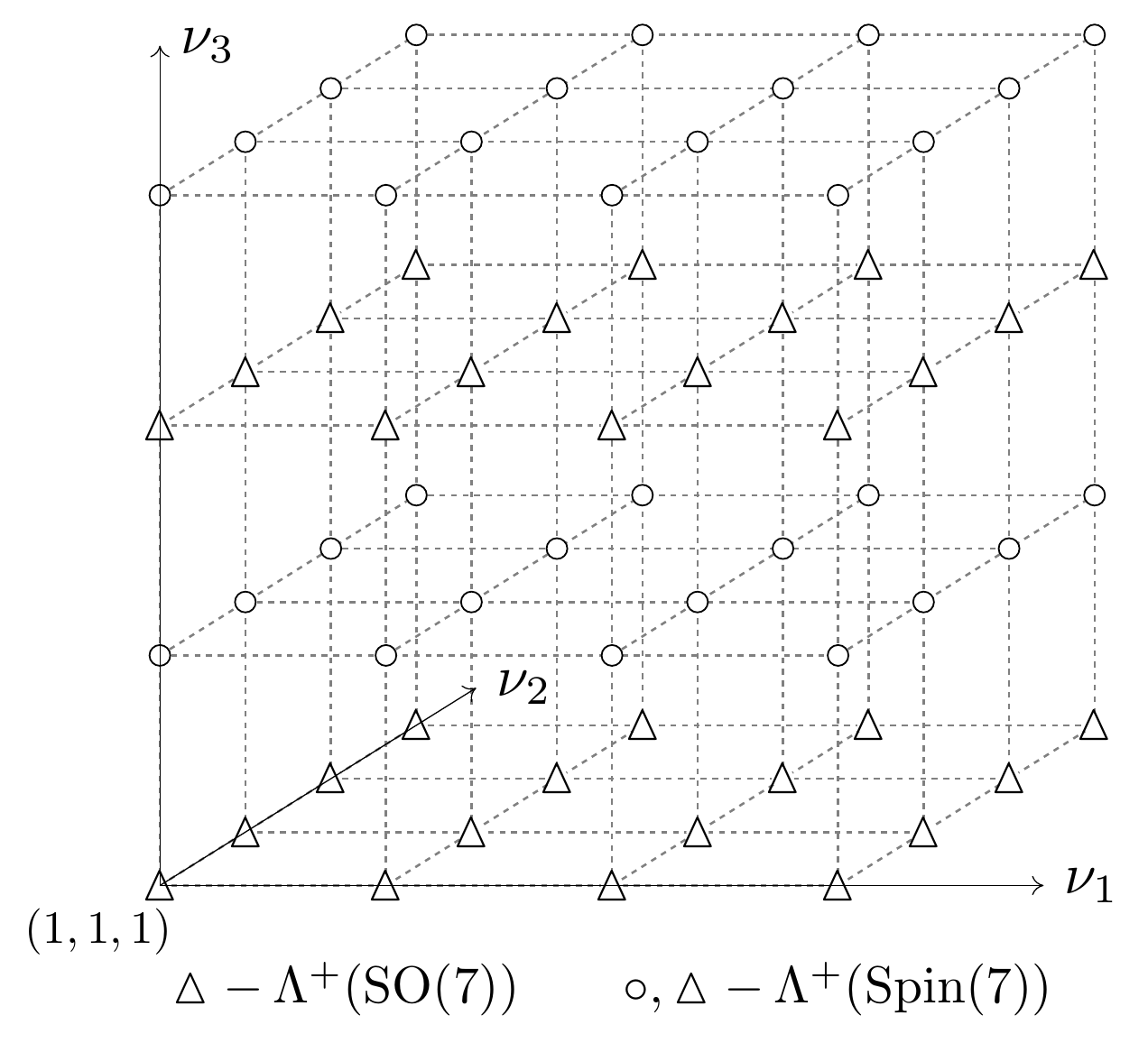}
\caption{The sets of highest weights of Lie groups, corresponding to the root system $B_3$}
\label{fig:lat_b}
\end{figure}

a) Let us give formulas defining Laplacian spectrum of the Lie group $\Spin(7)$.

6a) By formula (\ref{b3}), the set of highest weights of the Lie group $\Spin(7)$ is equal to
$$\Lambda^{+}(\Spin(7))=\left\{\sum\limits_{i=1}^{3}\Lambda_i\bar{\omega}_i\,\mid\,\Lambda_i\in\mathbb{Z},\,\,\Lambda_i\geq 0,\,\,i=1,2,3\right\}.$$

7a) Set $\Lambda=\sum\limits_{i=1}^{3}\Lambda_i\bar{\omega}_i\in\Lambda^{+}(\Spin(7))$, then by p.~4) eigenvalue $\lambda(\Lambda)$ and dimension $d(\Lambda+\beta)$ are calculated by formulas  (\ref{c1}) and (\ref{c2}) respectively, where $\nu_i=\Lambda_i+1$, $\nu_i\in\mathbb{N}$, $i=1,2,3.$

8a) Applying formula (\ref{abc}) and results of preceding point, we get the multiplicity of eigenvalue $\lambda(\Lambda)$:
$$\sigma(\Lambda)=\frac{1}{(720)^2}\sum_{\;\Xi_4}
\nu_1^2\nu_2^2\nu_3^2(\nu_1+\nu_2)^2(\nu_2+\nu_3)^2(2\nu_2+\nu_3)^2(\nu_1+\nu_2+\nu_3)^2(\nu_1+2\nu_2+\nu_3)^2(2\nu_1+2\nu_2+\nu_3)^2,$$
where
\begin{equation}
\label{xi4}
\Xi_4=\{(\nu_1,\nu_2,\nu_3)\in\mathbb{N}^3\,\mid\,(2\nu_1+2\nu_2+\nu_3)^2+(2\nu_2+\nu_3)^2+\nu_3^2=35-40\gamma\lambda\}.
\end{equation}

The least by modulus non-zero eigenvalue of Laplacian is equal to $-\frac{21}{40\gamma}$ and corresponds to irreducible complex representation of the Lie group $\Spin(7)$ with highest weight
$\bar{\omega}_3$. The dimension of this representation is equal to $8$. Therefore the multiplicity of the eigenvalue $-\frac{21}{40\gamma}$ is equal to $8^2=64$.

b) Let us give formulas defining Laplacian spectrum of the Lie group $\SO(7)$.

6b) By formula (\ref{b4}), the set of highest weights of the Lie group $\SO(7)$ is equal to
$$\Lambda^{+}(\SO(7))=\{\Omega_1\bar{\omega}_1+\Omega_2\bar{\omega}_2+2\Omega_3\bar{\omega}_3\,\,\mid\,\Omega_i\in\mathbb{Z},\,\,\Omega_i\geq 0,\,\,i=1,2,3\}.$$

7b) Set $\Lambda=\Omega_1\bar{\omega}_1+\Omega_2\bar{\omega}_2+2\Omega_3\bar{\omega}_3\in\Lambda^{+}(\SO(7))$, then by p.~4) eigenvalue $\lambda(\Lambda)$ and dimension $d(\Lambda+\beta)$ are calculated by formulas (\ref{b1}) and (\ref{b2}) respectively, where
$\nu_1=\Omega_1+1$, $\nu_2=\Omega_2+1$, $\nu_3=2\Omega_3+1$, $\nu_i\in\mathbb{N}$, $i=1,2,3,$ $\nu_3\equiv 1(\Mod 2)$.

8b) Applying formula (\ref{abc}) and results of preceding point, we get the following multiplicity of the eigenvalue $\lambda(\Lambda)$
$$\sigma(\Lambda)=\frac{1}{(720)^2}\sum_{\;\Xi_5}
\nu_1^2\nu_2^2\nu_3^2(\nu_1+\nu_2)^2(\nu_2+\nu_3)^2(2\nu_2+\nu_3)^2(\nu_1+\nu_2+\nu_3)^2(\nu_1+2\nu_2+\nu_3)^2(2\nu_1+2\nu_2+\nu_3)^2,$$
where
\begin{equation}
\label{xi5}
\Xi_5=\{(\nu_1,\nu_2,\nu_3)\in\mathbb{N}^3\,\mid\,(2\nu_1+2\nu_2+\nu_3)^2+(2\nu_2+\nu_3)^2+\nu_3^2=35-40\gamma\lambda,\,\,\nu_3\equiv 1(\Mod 2)\}.
\end{equation}

The least by modulus non-zero eigenvalue of Laplacian is equal to $-\frac{3}{5\gamma}$ and corresponds to irreducible complex representation of the Lie group $SO(7)$ with highest weight
$\bar{\omega}_1$. The dimension of this representation is equal to $7$. Consequently the multiplicity of the eigenvalue $-\frac{3}{5\gamma}$ is equal to $7^2=49$.

\section{Calculation of Laplacian spectrum for Lie groups $\Sp(3)$ and $\Sp(3)/C(\Sp(3))$}\label{Sec:C3}

\begin{remark}
Remind that $\Sp(2n,C)$ is the Lie group of complex matrices $g$ of order $2n$ with determinant $1$, satisfying condition
$$g^{T}\left(\begin{array}{cc}
0 & E_n \\
-E_n & 0
\end{array}\right)g=\left(\begin{array}{cc}
0 & E_n \\
-E_n & 0
\end{array}\right),$$
where $E_n$ is unit matrix of $n$th order. It is known that the Lie group $\Sp(n)$ is isomorphic to the Lie group $U(2n)\cap\Sp(2n,C)$.  Therefore $C(\Sp(n))$ is isomorphic to
$(\pm E_{2n})\subset U(2n)\cap\Sp(2n,C)$, while   $\Sp(n)/C(\Sp(n))$
is isomorphic to the Lie group $(U(2n)\cap\Sp(2n,C) )/(\pm E_{2n})$.
\end{remark}

By table \ref{Tab:list}, the Lie groups under considerations correspond to root system $C_3$. We apply table~III from \cite{Burb}. Simple roots are
$\alpha_1=\varepsilon_1-\varepsilon_2,\,\,\alpha_2=\varepsilon_2-\varepsilon_3,\,\,\alpha_3=2\varepsilon_3$; maximal root is
$\tilde{\alpha}=2\alpha_1+2\alpha_2+\alpha_3=2\varepsilon_1$.
Positive roots are $2\varepsilon_i$,  $i=1,2,3$, $\varepsilon_1\pm\varepsilon_2$, $\varepsilon_1\pm\varepsilon_3$, $\varepsilon_2\pm\varepsilon_3$.
The sum of positive roots is equal to
$2\beta=6\varepsilon_1+4\varepsilon_2+2\varepsilon_3$, whence
\begin{equation}
\label{p1}
\beta=3\varepsilon_1+2\varepsilon_2+\varepsilon_3,\quad \tilde{\alpha}+\beta=5\varepsilon_1+2\varepsilon_2+\varepsilon_3.
\end{equation}

We act according to algorithm presented in theorem \ref{alg}.

1) $b=\langle\tilde{\alpha}+\beta,\tilde{\alpha}+\beta\rangle-\langle\beta,\beta\rangle=30-14=16.$

2) $\left(\cdot,\cdot\right)=\frac{1}{16}\langle\cdot,\cdot\rangle.$

3) Fundamental weights have the following form
$$\bar{\omega}_1=\varepsilon_1,\quad
\bar{\omega}_2=\varepsilon_1+\varepsilon_2,\quad
\bar{\omega}_3=\varepsilon_1+\varepsilon_2+\varepsilon_3.$$
It is easy to see that
$$\tilde{\alpha}=2\bar{\omega}_1,\quad\beta=\bar{\omega}_1+\bar{\omega}_2+\bar{\omega}_3.$$

4) Set $\Lambda=\sum\limits_{i=1}^{3}\Lambda_i\bar{\omega}_i\in\Lambda^{+}(\mathfrak{sp}(3)),$ where $\Lambda_i\in\mathbb{Z}$, $\Lambda_i\geq 0$, $i=1,2,3$. Then
\begin{equation}
\label{p2}
\Lambda+\beta=\sum_{i=1}^{3}(\Lambda_i+1)\bar{\omega}_i=
\sum_{i=1}^{3}\nu_i\bar{\omega}_i
=\left(\nu_1+\nu_2+\nu_3\right)\varepsilon_1+
\left(\nu_2+\nu_3\right)\varepsilon_2+\nu_3\varepsilon_3,
\end{equation}
where $\nu_i=\Lambda_i+1$, $\nu_i\in\mathbb{N}$, $i=1,2,3.$

By formula (\ref{lam}), eigenvalue $\lambda(\Lambda),$ corresponding to highest weight
$\Lambda,$ is equal to
\begin{equation}
\label{c1}
\lambda(\Lambda)=-\frac{1}{16\gamma}[\langle\Lambda+\beta,\Lambda+\beta\rangle-\langle\beta,\beta\rangle]=
-\frac{1}{16\gamma}\left((\nu_1+\nu_2+\nu_3)^2+(\nu_2+\nu_3)^2+\nu_3^2-14\right).
\end{equation}

By formula (\ref{dim}), dimension $d(\Lambda+\beta)$ of representation
$\rho(\Lambda),$ associated with highest weight $\Lambda,$ is equal to
$$d(\Lambda+\beta)=\frac{(\Lambda+\beta,2\varepsilon_1)}{(\beta,2\varepsilon_1)}\cdot\frac{(\Lambda+\beta,2\varepsilon_2)}{(\beta,2\varepsilon_2)}\cdot
\frac{(\Lambda+\beta,2\varepsilon_3)}{(\beta,2\varepsilon_3)}\cdot\frac{(\Lambda+\beta,\varepsilon_1-\varepsilon_2)}{(\beta,\varepsilon_1-\varepsilon_2)}\times$$
$$\times\frac{(\Lambda+\beta,\varepsilon_1-\varepsilon_3)}{(\beta,\varepsilon_1-\varepsilon_3)}\cdot
\frac{(\Lambda+\beta,\varepsilon_2-\varepsilon_3)}{(\beta,\varepsilon_2-\varepsilon_3)}\cdot
\frac{(\Lambda+\beta,\varepsilon_1+\varepsilon_2)}{(\beta,\varepsilon_1+\varepsilon_2)}
\cdot\frac{(\Lambda+\beta,\varepsilon_1+\varepsilon_3)}{(\beta,\varepsilon_1+\varepsilon_3)}
\cdot\frac{(\Lambda+\beta,\varepsilon_2+\varepsilon_3)}{(\beta,\varepsilon_2+\varepsilon_3)}.$$

On the ground of (\ref{p1}) and (\ref{p2}),
$$\frac{(\Lambda+\beta,2\varepsilon_1)}{(\beta,2\varepsilon_1)}=
\frac{\nu_1+\nu_2+\nu_3}{3},\quad
\frac{(\Lambda+\beta,2\varepsilon_2)}{(\beta,2\varepsilon_2)}=
\frac{\nu_2+\nu_3}{2},$$
$$\frac{(\Lambda+\beta,2\varepsilon_3)}{(\beta,2\varepsilon_3)}=\nu_3,\quad
\frac{(\Lambda+\beta,\varepsilon_1-\varepsilon_2)}{(\beta,\varepsilon_1-\varepsilon_2)}=
\nu_1,\quad
\frac{(\Lambda+\beta,\varepsilon_1-\varepsilon_3)}{(\beta,\varepsilon_1-\varepsilon_3)}=
\frac{\nu_1+\nu_2}{2},$$
$$\frac{(\Lambda+\beta,\varepsilon_2-\varepsilon_3)}{(\beta,\varepsilon_2-\varepsilon_3)}=
\nu_2,\quad
\frac{(\Lambda+\beta,\varepsilon_1+\varepsilon_2)}{(\beta,\varepsilon_1+\varepsilon_2)}=
\frac{\nu_1+2\nu_2+2\nu_3}{5},$$
$$\frac{(\Lambda+\beta,\varepsilon_1+\varepsilon_3)}{(\beta,\varepsilon_1+\varepsilon_3)}=
\frac{\nu_1+\nu_2+2\nu_3}{4},\quad
\frac{(\Lambda+\beta,\varepsilon_2+\varepsilon_3)}{(\beta,\varepsilon_2+\varepsilon_3)}=
\frac{\nu_2+2\nu_3}{3}.$$
Consequently
\begin{equation}
\label{c2}
d(\Lambda+\beta)=\frac{1}{720}\nu_1\nu_2\nu_3(\nu_1+\nu_2)(\nu_2+\nu_3)(\nu_2+2\nu_3)(\nu_1+\nu_2+\nu_3)(\nu_1+\nu_2+2\nu_3)(\nu_1+2\nu_2+2\nu_3).
\end{equation}

\begin{remark}
It follows from (\ref{c1}) and (\ref{c2}) that
$\lambda(\Lambda)=0 \Leftrightarrow \Lambda = 0\Leftrightarrow\nu_i=1$, $i=1,2,3$ and hence the multiplicity $\sigma(\lambda(0))=d^2(0+\beta)=1$ what corresponds to formula (\ref{Eq:spec}).
Moreover, on the base of proposition \ref{Prop:spec_ad}, highest weight of the adjoined representation $\Ad$ is the maximal root
$\tilde{\alpha}=2\alpha_1+2\alpha_2+\alpha_3=2\bar{\omega}_1$ with $\nu_1 = 3 $ and
$\nu_2 = \nu_3 = 1$. Therefore eigenvalue and dimension of representation $\Ad$ can be calculated as follows
$$\lambda_{\Ad}=\lambda(\tilde{\alpha})= -\frac{1}{16\gamma}\left(25+4+1-14\right)= -\frac{1}{\gamma},$$
$$\dim\Ad=d(\tilde{\alpha}+\beta)=\frac{3\cdot 4\cdot 2\cdot 3\cdot 5\cdot 6\cdot 7}{720}=21,$$
what corresponds to formula (\ref{Eq:spec_ad}) for $\gamma=1$ with taking into account the equality  $\dim \Sp(3)=21$ from Table \ref{Tab:list}.
\end{remark}

5) Simple roots and fundamental weights of the Lie algebra $\mathfrak{sp}(3),$ indicated above, define respective lattices $\Lambda_0(C_3)$ and $\Lambda_1(C_3)$ in the following manner
\begin{equation}
\label{c3}
\Lambda_0(C_3)=\left\{\sum\limits_{i=1}^{3}\Psi_i\alpha_i\,\mid\,\Psi_1,\Psi_2,\Psi_3\in\mathbb{Z}\right\},\quad
\Lambda_1(C_3)=\left\{\sum\limits_{i=1}^{3}\Lambda_i\bar{\omega}_i\,\mid\,\Lambda_1,\Lambda_2,\Lambda_3\in\mathbb{Z}\right\}.
\end{equation}

After expressing roots via fundamental weights
$$\alpha_1=2\bar{\omega}_1-\bar{\omega}_2,\quad \alpha_2=-\bar{\omega}_1+2\bar{\omega}_2-\bar{\omega}_3,\quad \alpha_3=-2\bar{\omega}_2+2\bar{\omega}_3$$
and the change of variables
$$\left\{
\begin{array}{l}
\Psi_1=2\Omega_1+\Omega_2+2\Omega_3, \\
\Psi_2=2\Omega_1+2\Omega_2+3\Omega_3, \\
\Psi_3=\Omega_1+\Omega_2+2\Omega_3
\end{array}\right.\quad\Leftrightarrow\quad
\left\{
\begin{array}{l}
\Omega_1=\Psi_1-\Psi_3, \\
\Omega_2=-\Psi_1+2\Psi_2-2\Psi_3, \\
\Omega_3=-\Psi_2+2\Psi_3
\end{array}\right.$$
the lattice takes the following form
\begin{equation}
\label{c4}
\Lambda_0(C_3)=\{2\Omega_1\bar{\omega}_1+\Omega_2\bar{\omega}_2+\Omega_3(\bar{\omega}_1+\bar{\omega}_3)\,\,\mid\,\Omega_1,\Omega_2,\Omega_3\in\mathbb{Z}\}.
\end{equation}
Let us define lattice $\Lambda_1(C_3)$ in base
$\{\bar{\omega}_1,\bar{\omega}_2,\bar{\omega}_1+\bar{\omega}_3\}$ via the change of variables
$\{\Lambda_1=\Omega_1+\Omega_3,\,\Lambda_2=\Omega_2,\,\Lambda_3=\Omega_3\}$:
\begin{equation}
\label{c5}
\Lambda_1(C_3)=\{\Omega_1\bar{\omega}_1+\Omega_2\bar{\omega}_2+\Omega_3(\bar{\omega}_1+\bar{\omega}_3)\,\,\mid\,\Omega_1,\Omega_2,\Omega_3\in\mathbb{Z}\}.
\end{equation}

It follows from (\ref{c4}) and (\ref{c5}) that $\Lambda_0(C_3)\subset\Lambda_1(C_3)$ and
$\Lambda_1(C_3)/\Lambda_0(C_3)\cong\mathbb{Z}_2$, i.e. it has prime order. It follows from here and corollary \ref{Col:LCalc} that there is no other lattices. Lie groups, associated with these characteristic lattices, are presented in table \ref{Tab:list}. The sets of highest weights, corresponding to these lattices, are depicted in figure \ref{fig:lat_c} above.
\begin{figure}
\includegraphics[width=0.55\textwidth]{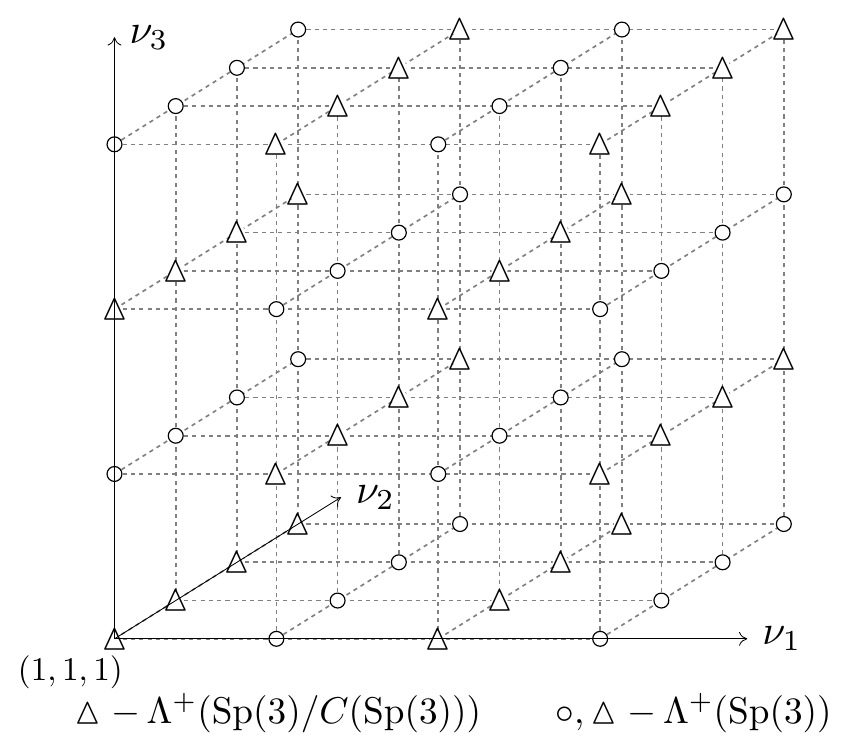}
\caption{The sets of highest weights of Lie groups, corresponding to the root system  $C_3$}
\label{fig:lat_c}
\end{figure}

a) Let us give formulas, defining Laplacian spectrum of the Lie group $\Sp(3)$.

6a) By formula (\ref{c3}), the set of highest weights of the Lie group $\Sp(3)$ is equal to
$$\Lambda^{+}(\Sp(3))=\Lambda^{+}_1(C_3)=\left\{\sum\limits_{i=1}^{3}\Lambda_i\bar{\omega}_i\,\Bigm|\,\Lambda_i\in\mathbb{Z},\,\,\Lambda_i\geq 0,\,\,i=1,2,3\right\}.$$

7a) Set $\Lambda=\sum\limits_{i=1}^{3}\Lambda_i\bar{\omega}_i\in\Lambda^{+}(\Sp(3))$, then by p.~4) eigenvalue $\lambda(\Lambda)$ and dimension $d(\Lambda+\beta)$ are calculated respectively by formulas (\ref{c1}) and (\ref{c2}), where $\nu_i=\Lambda_i+1$, $\nu_i\in\mathbb{N}$, $i=1,2,3.$

8a) Applying formula (\ref{abc}) and result of preceding point, we get the following multiplicity of eigenvalue $\lambda(\Lambda)$
$$\sigma(\Lambda)=\frac{1}{(720)^2}\sum_{\Xi_6}
\nu_1^2\nu_2^2\nu_3^2(\nu_1+\nu_2)^2(\nu_2+\nu_3)^2(\nu_2+2\nu_3)^2(\nu_1+\nu_2+\nu_3)^2(\nu_1+\nu_2+2\nu_3)^2(\nu_1+2\nu_2+2\nu_3)^2,$$
where
\begin{equation}
\label{xi6}
\Xi_6=\{(\nu_1,\nu_2,\nu_3)\in\mathbb{N}^3\,\mid\,(\nu_1+\nu_2+\nu_3)^2+(\nu_2+\nu_3)^2+\nu_3^2=14-16\gamma\lambda\}.
\end{equation}

The least by modulus non-zero eigenvalue of Laplacian is equal to $-\frac{7}{16\gamma}$ and corresponds to irreducible complex representation with highest weight $\bar{\omega}_1$. The dimension of this representation of the Lie group $\Sp(3)$ is equal to $6$. Therefore the multiplicity of eigenvalue
$-\frac{7}{16\gamma}$ is equal to $6^2=36$.

b) Let present formulas giving Laplacian spectrum of the Lie group $\Sp(3)/C(\Sp(3))$.

6b) By formula (\ref{c4}), the  set of highest weights of the Lie group is equal to
$\Sp(3)/C(Sp(3))$
\begin{equation*}
\begin{array}{r c l}
&&\Lambda^{+}(\Sp(3)/C(\Sp(3)))=\Lambda_0(C_3)\cap\Lambda^{+}_1(C_3)=\\
&&\{(2\Omega_1+\Omega_3)\bar{\omega}_1+\Omega_2\bar{\omega}_2+\Omega_3\bar{\omega}_3\mid\Omega_1,\Omega_2,\Omega_3\in\mathbb{Z},\,\Omega_2\geq 0,\,\Omega_3\geq 0,\,2\Omega_1+\Omega_3\geq 0\}.
\end{array}
\end{equation*}

7b) Set $\Lambda=(2\Omega_1+\Omega_3)\bar{\omega}_1+\Omega_2\bar{\omega}_2+\Omega_3\bar{\omega}_3\in\Lambda^{+}(\Sp(3)/C(\Sp(3)))$, then by p.~4) eigenvalue $\lambda(\Lambda)$ and dimension  $d(\Lambda+\beta)$ are calculated respectively by formulas (\ref{c1}) and (\ref{c2}), where
$\nu_1=2\Omega_1+\Omega_3+1$, $\nu_2=\Omega_2+1$, $\nu_3=\Omega_3+1$, $\nu_i\in\mathbb{N}$, $i=1,2,3,$ $\nu_1\equiv \nu_3(\Mod 2)$.

8b) Applying formula (\ref{abc}) and results of preceding point, we get the following multiplicity of eigenvalue $\lambda(\Lambda)$
$$\sigma(\Lambda)=\frac{1}{(720)^2}\sum_{\Xi_7}
\nu_1^2\nu_2^2\nu_3^2(\nu_1+\nu_2)^2(\nu_2+\nu_3)^2(\nu_2+2\nu_3)^2(\nu_1+\nu_2+\nu_3)^2(\nu_1+\nu_2+2\nu_3)^2(\nu_1+2\nu_2+2\nu_3)^2,$$
where
\begin{equation}
\label{xi7}
\Xi_7=\{(\nu_1,\nu_2,\nu_3)\in\mathbb{N}^3\,\mid\,(\nu_1+\nu_2+\nu_3)^2+(\nu_2+\nu_3)^2+\nu_3^2=14-16\gamma\lambda,\,\nu_1\equiv\nu_3(\Mod 2)\}.
\end{equation}

The least by modulus non-zero eigenvalue of Laplacian is equal to $-\frac{3}{4\gamma}$ and corresponds to irreducible complex representation of the Lie group $\Sp(3)/C(\Sp(3))$ with highest weight $\bar{\omega}_2$. The dimension of this representation is equal to $14$. Consequently the multiplicity of the eigenvalue $-\frac{3}{4\gamma}$ is equal to $14^2=196$.

\section{Necessary information from number theory}

In this section are presented all necessary information on classical solutions of presentation problem of natural numbers by values of some positively defined ternary (or binary) integer quadratic forms on integer three-dimensional (respectively, two-dimensional) vectors, applying in next sections.

\begin{theorem}\cite{Ven}, \cite{Dev}.
\label{sq}
A natural number $k$ can be presented in the form
\begin{equation}
\label{square1}
k=x^2+y^2,\quad x,\,y\in\mathbb{Z},
\end{equation}
if and only if $k$ has no prime factor $p$ with condition $p\equiv 3(\Mod 4)$, which occurs in odd
power into factorization of $k$ by prime factors.

In addition a number $N_2(k)$ of all solutions to the equation (\ref{square1}) is equal to quadruplicate difference of quantities of (natural) divisors $d$ of $k$ such that
$d\equiv 1(\Mod 4)$ and divisors $d$ of $k$ such that $d\equiv 3(\Mod 4)$.
\end{theorem}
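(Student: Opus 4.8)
The plan is to prove the classical two–squares theorem by working in the ring of Gaussian integers $\mathbb{Z}[i]$, where a representation $k = x^2 + y^2$ corresponds to a factorization $k = (x+iy)(x-iy) = N(x+iy)$ with $N$ the norm. First I would establish that $\mathbb{Z}[i]$ is a Euclidean domain (with respect to the norm $N(a+bi) = a^2+b^2$), hence a unique factorization domain, and record that the units are exactly $\{\pm 1, \pm i\}$. Then I would classify the Gaussian primes by examining rational primes $p$: the prime $2$ ramifies as $2 = -i(1+i)^2$; a prime $p \equiv 1 \pmod 4$ splits as $p = \pi\bar\pi$ with $\pi$ a Gaussian prime of norm $p$ (this uses that $-1$ is a quadratic residue mod $p$, so $p \mid m^2+1$ for some $m$, and then $\gcd(p, m+i)$ in $\mathbb{Z}[i]$ is a proper divisor); a prime $p \equiv 3 \pmod 4$ stays prime in $\mathbb{Z}[i]$ with norm $p^2$ (if it factored, it would be a norm, but $a^2+b^2 = p$ is impossible mod $4$).

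With the prime classification in hand, the characterization of which $k$ are sums of two squares follows by a multiplicativity argument. The key point is that $k = N(z)$ for some $z \in \mathbb{Z}[i]$ iff in the factorization $k = 2^a \prod_{p \equiv 1} p^{b_p} \prod_{q \equiv 3} q^{c_q}$ every exponent $c_q$ is even: the primes $2$ and $p \equiv 1$ are norms (of $1+i$ and $\pi$ respectively), so they contribute freely, while each $q \equiv 3$ can only appear through the prime $q$ itself, whose norm is $q^2$, forcing an even power of $q$ in $N(z)$. Conversely, if all $c_q$ are even, one assembles $z$ explicitly from $1+i$, the $\pi$'s, and the $q$'s, and the norm is multiplicative. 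Since $N$ being multiplicative is just the Brahmagupta–Fibonacci identity $(x^2+y^2)(u^2+v^2) = (xu-yv)^2 + (xv+yu)^2$, this half of the argument is elementary and could even be stated without Gaussian integers, but the integers make the bookkeeping transparent.

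For the counting formula I would set up a bijection between representations $k = x^2+y^2$ (ordered pairs, signs included) and elements $z \in \mathbb{Z}[i]$ with $N(z) = k$, then count the latter by unique factorization. Writing $k = 2^a \prod p_i^{b_i} \prod q_j^{2 c_j}$, every $z$ with $N(z)=k$ has the shape (up to a unit, $4$ choices) $(1+i)^a \prod \pi_i^{s_i} \bar\pi_i^{b_i - s_i} \prod q_j^{c_j}$ with $0 \le s_i \le b_i$, giving $4 \prod (b_i+1)$ elements, which vanishes precisely when some $b_i$ is forced inconsistent — but here one must be careful: the count is $4\prod(b_i+1)$ when all $c_q$ are even and $0$ otherwise. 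The final step is the purely number-theoretic identity transforming $4\prod_i (b_i+1)$ into $4\bigl(d_1(k) - d_3(k)\bigr)$, where $d_1, d_3$ count divisors $\equiv 1, 3 \pmod 4$; this is proved by noting $d_1 - d_3 = \sum_{d \mid k} \chi(d)$ for the non-principal character $\chi$ mod $4$, and that $\sum_{d \mid k}\chi(d)$ is multiplicative with value $b_i+1$ at $p_i^{b_i}$ ($p_i \equiv 1$), value $1$ at $2^a$, and value $1$ at $q_j^{2c_j}$ (alternating sum $1-1+1-\cdots+1$), hence value $0$ at any odd power of $q \equiv 3$.

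I expect the main obstacle to be the splitting of primes $p \equiv 1 \pmod 4$ — specifically, extracting a Gaussian prime of norm $p$ from the congruence $-1 \equiv m^2 \pmod p$, which requires knowing that $\mathbb{Z}[i]$ is a PID (or running a descent/pigeonhole argument à la Thue) so that $\gcd(p, m+i)$ is a genuine nontrivial divisor rather than a unit or an associate of $p$. Everything downstream — multiplicativity, the structure of norm-$k$ elements, and the character-sum identity for the divisor count — is routine once the local picture at each rational prime is pinned down, so I would spend the bulk of the argument on the prime classification and treat the rest as bookkeeping.
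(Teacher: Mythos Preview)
Your plan is correct and complete: the Gaussian-integer argument is the standard route, the prime classification in $\mathbb{Z}[i]$ is stated accurately, and the counting step via the non-principal character $\chi$ modulo $4$ (with $\sum_{d\mid k}\chi(d)$ evaluated multiplicatively at prime powers as you describe) does yield $N_2(k)=4\bigl(d_1(k)-d_3(k)\bigr)$.

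There is, however, nothing to compare against: the paper does not prove this theorem at all. It is quoted as a classical result with citations to Venkov and Davenport and then used as input for the later computations (Theorems~\ref{sq1}, \ref{3sq}, and the spectrum theorems in the Conclusion). So you have supplied a full proof where the paper simply invokes the literature. If anything, your write-up could be shortened for the purposes of this paper to a one-line citation, since the result is not the paper's contribution; but as a self-contained argument your outline is sound.
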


\begin{theorem}
\label{sq1}
Assume that a natural number $k$ can be presented in the form
\begin{equation}
\label{squared21}
k=x^2+y^2,\quad x<y,\,\,x,\,y\in\mathbb{N},
\end{equation}
$L_2(k)$ is the quantity of such presentations. Then

1. If $k\neq m^2$, $k\neq 2m^2$ for any $m\in\mathbb{N}$, then $L_2(k)=N_2(k)/8$.

2. If $k=m^2$ or $k=2m^2$ for some $m\in\mathbb{N}$, then $L_2(k)=\left(N_2(k)-4\right)/8$.

\end{theorem}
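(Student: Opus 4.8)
The approach will be orbit counting under the natural symmetry group of the equation $k=x^2+y^2$. Consider the dihedral group $G$ of order $8$ acting on $S(k):=\{(x,y)\in\mathbb{Z}^2\mid x^2+y^2=k\}$, the set of all integer solutions of (\ref{square1}), which has $N_2(k)$ elements by definition. This group is generated by the two sign changes $(x,y)\mapsto(-x,y)$, $(x,y)\mapsto(x,-y)$ and the transposition $(x,y)\mapsto(y,x)$, each of which maps $S(k)$ onto itself. First I would check that every $G$-orbit in $S(k)$ contains a unique \emph{reduced} representative, namely a pair $(x,y)$ with $0\le x\le y$: apply sign changes to make both coordinates nonnegative, then transpose if necessary; uniqueness follows because the group only permutes signs and swaps the two coordinates. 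Hence the number of $G$-orbits equals the number of reduced solutions of $k=x^2+y^2$. Since $k\ge 1$, these reduced solutions split into: those with $0<x<y$, which are exactly the $L_2(k)$ representations counted in (\ref{squared21}); at most one with $x=0$, which exists if and only if $k=m^2$ for some $m\in\mathbb{N}$ (then it is $(0,m)$); and at most one with $x=y$, which exists if and only if $k=2m^2$ for some $m\in\mathbb{N}$ (then it is $(m,m)$).

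Next I would compute the size of each orbit by the orbit--stabilizer theorem, $|Gv|=8/|G_v|$. A reduced solution with $0<x<y$ has trivial stabilizer in $G$, so its orbit has $8$ elements. The solution $(0,m)$ is fixed exactly by the identity and by $(x,y)\mapsto(-x,y)$, so $|G_{(0,m)}|=2$ and its orbit $\{(0,\pm m),(\pm m,0)\}$ has $4$ elements; likewise $(m,m)$ is fixed exactly by the identity and by the transposition, so its orbit $\{(\pm m,\pm m)\}$ has $4$ elements. Summing the orbit sizes over all orbits gives
\begin{equation}
N_2(k)=8\,L_2(k)+4\cdot\mathbf{1}[\,k=m^2\text{ for some }m\in\mathbb{N}\,]+4\cdot\mathbf{1}[\,k=2m^2\text{ for some }m\in\mathbb{N}\,].
\end{equation}
Since $m^2=2n^2$ has no solution in positive integers, the two indicator terms cannot both equal $1$. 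Therefore, if $k$ is neither a perfect square nor twice a perfect square both terms vanish and $L_2(k)=N_2(k)/8$, which is statement 1; and if $k=m^2$ or $k=2m^2$ (exactly one of the two), one term equals $1$ and the other $0$, so $N_2(k)=8\,L_2(k)+4$, that is, $L_2(k)=(N_2(k)-4)/8$, which is statement 2.

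The arithmetic here is minimal; the step that needs genuine care is the orbit bookkeeping: one must confirm that the only solutions with nontrivial $G$-stabilizer lie on the coordinate axes or on the lines $y=\pm x$, that on each of these loci the relevant solution (if it exists) is unique up to the $G$-action and belongs to exactly one of the two exceptional families, that no orbit is smaller than $4$ (which uses $k\ge 1$), and that the two families are disjoint. It is also worth checking the formula against small values such as $k=1$ and $k=2$, where $N_2=4$ and $L_2=0$, to make sure the conventions ``$x<y$'' and ``$x,y\in\mathbb{N}$'' in (\ref{squared21}) are being applied consistently with the count of integer solutions.
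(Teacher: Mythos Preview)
Your argument is correct and is essentially the same as the paper's: the paper directly enumerates, for each presentation $(x,y)$ with $0<x<y$, the eight integer solutions $(\pm x,\pm y),(\pm y,\pm x)$, and then accounts for the four extra solutions $(\pm m,0),(0,\pm m)$ when $k=m^2$ or $(\pm m,\pm m)$ when $k=2m^2$. Your framing via the dihedral group of order $8$ and the orbit--stabilizer theorem is just a more systematic packaging of the same count; the added remark that $m^2=2n^2$ has no positive integer solution (so the two exceptional cases are mutually exclusive) is implicit in the paper's case split.
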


\begin{proof}
1. Let $k\neq m^2$, $k\neq 2m^2$ for all $m\in\mathbb{N}$. To every presentation $(x,y)$ of the number $k$ in the form (\ref{squared21}) correspond exactly 8 of different presentations of $k$ of the form (\ref{square1}), namely, ordered pairs $(\pm x,\pm y)$, $(\pm y,\pm x)$.
Therefore $N_2(k)=8L_2(k)$, whence follows the required formula.

2. Let $k=m^2$ ($k=2m^2$) for some $m\in\mathbb{N}$. Then, besides presentations of $k$ in the form  (\ref{square1}), described in p.~1, there are also only 4 different presentations of the form (\ref{square1}), namely, ordered pairs $(\pm m,0)$, $(0,\pm m)$ (respectively, $(\pm m,\pm m)$). Therefore $N_2(k)=8L_2(k)+4$, whence follows the required formula.
\end{proof}

Theorems \ref{sq} and \ref{sq1} imply directly

\begin{corollary}
\label{cor}
A natural number $k$ can be presented in the form (\ref{squared21}) if and only if the following conditions are fulfilled.

1. $k\geq 5$ and $k$ has no prime divisor $p$ with condition $p\equiv 3(\Mod 4)$, which occurs in odd power into factorization of $k$ by prime factors.

2. If $k=m^2$ or $k=2m^2$ for some $m\in\mathbb{N}$ then the difference of quantities of (natural) divisors $d$ of $k$ such that $d\equiv 1(\Mod 4)$ and divisors $d$ of $k$ such that
$d\equiv 3(\Mod 4)$ is more than 1.
\end{corollary}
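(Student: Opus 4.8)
The plan is to read the corollary off directly from Theorems \ref{sq} and \ref{sq1}, using the tautology that $k$ admits a presentation of the form (\ref{squared21}) precisely when $L_2(k)\geq 1$. Throughout, I would denote by $D(k)$ the difference between the number of natural divisors of $k$ congruent to $1$ modulo $4$ and the number of those congruent to $3$ modulo $4$, so that Theorem \ref{sq} reads $N_2(k)=4D(k)$, with $N_2(k)>0$ if and only if $k$ has no prime divisor $p\equiv 3(\Mod 4)$ occurring in an odd power in its factorization. Since $L_2(k)$ is by construction a nonnegative integer, the formulas of Theorem \ref{sq1} carry the extra information that $8\mid N_2(k)$ in its first case and that $D(k)$ is odd (equivalently $N_2(k)\equiv 4(\Mod 8)$) in its second case.

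First I would dispose of the case in which $k\neq m^2$ and $k\neq 2m^2$ for every $m\in\mathbb{N}$. Here $L_2(k)=N_2(k)/8$, and since $8\mid N_2(k)$ this gives the chain $L_2(k)\geq 1\iff N_2(k)>0\iff k$ has no prime divisor $p\equiv 3(\Mod 4)$ in an odd power. Condition~2 is then vacuously satisfied, and the bound $k\geq 5$ of condition~1 comes for free: a solution of (\ref{squared21}) forces $k\geq 1^2+2^2=5$, while conversely, if $k$ is a sum of two squares and $k\leq 4$, then either $k\in\{1,2,4\}$, which equal $1^2$, $2\cdot 1^2$, $2^2$ and are thus excluded from this case, or $k=3$, excluded by the prime condition. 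Thus in this case (\ref{squared21}) is solvable if and only if conditions~1 and~2 hold.

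Next I would treat the case $k=m^2$ or $k=2m^2$ for some $m\in\mathbb{N}$. Now $L_2(k)=(N_2(k)-4)/8$, so $L_2(k)\geq 1\iff N_2(k)\geq 12\iff D(k)\geq 3$, and since $D(k)$ is odd the last condition is equivalent to $D(k)>1$, which is precisely condition~2. It remains to observe that here condition~2 implies condition~1: from $D(k)>1$ we get $N_2(k)>0$, hence the prime condition via Theorem \ref{sq}, while $D(k)\geq 3$ forces the existence of a divisor $d$ of $k$ with $d>1$ and $d\equiv 1(\Mod 4)$, so that $d\geq 5$ and therefore $k\geq 5$. Hence again (\ref{squared21}) is solvable if and only if conditions~1 and~2 hold, and combining the two cases proves the corollary.

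I do not anticipate a genuine obstacle: once Theorems \ref{sq} and \ref{sq1} are available the argument is pure bookkeeping. The only points requiring care are the finitely many small values $k\leq 4$ and the use of the divisibility facts $8\mid N_2(k)$, respectively $N_2(k)\equiv 4(\Mod 8)$ — both automatic since $L_2(k)$ counts something — in order to upgrade the strict positivity of $N_2(k)$, respectively of $N_2(k)-4$, to the sharp thresholds $N_2(k)\geq 8$ and $N_2(k)\geq 12$. It also seems worth noting that the requirement $k\geq 5$ in statement~1, although included for clarity, is in fact a consequence of the remaining hypotheses.
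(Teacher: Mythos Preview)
Your argument is correct and follows exactly the route the paper intends: the paper's own proof consists of the single line ``Theorems~\ref{sq} and~\ref{sq1} imply directly,'' and what you have written is precisely the bookkeeping that unpacks this implication. Your additional remark that the bound $k\geq 5$ is in fact redundant is a nice observation not made explicit in the paper.
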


\begin{theorem} \cite{Buh}
An odd natural number $k$ can be presented in the form
\begin{equation}
\label{square2}
k=x^2+2y^2,\quad x,\,y\in\mathbb{Z},
\end{equation}
where $\mbox{GCD}(x,y)=1$, if and only if the factorization of $k$ by prime factors does not contain prime numbers of the form $8n+5$ и $8n+7$.
\end{theorem}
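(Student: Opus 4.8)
The plan is to reduce the representability of an odd $k$ by $x^{2}+2y^{2}$ with $\gcd(x,y)=1$ to the solvability of the congruence $t^{2}\equiv -2\pmod{k}$, and then to translate that congruence into the stated divisibility condition. So the argument splits into two genuinely separate pieces: the equivalence ``primitive representation $\Longleftrightarrow$ congruence $t^{2}\equiv-2\pmod k$ solvable'', and the elementary number theory ``congruence solvable $\Longleftrightarrow$ no prime factor of $k$ is of the form $8n+5$ or $8n+7$''.

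First I would dispose of the easy reductions. For the necessity of the arithmetic condition, suppose $k=x^{2}+2y^{2}$ with $\gcd(x,y)=1$ and $k$ odd. If a prime $p$ divided both $y$ and $k$, then $p\mid x^{2}$, hence $p\mid x$, contradicting $\gcd(x,y)=1$; thus $y$ is invertible modulo $k$, and with $t\equiv xy^{-1}\pmod k$ we get $t^{2}y^{2}\equiv x^{2}\equiv -2y^{2}\pmod k$, so $t^{2}\equiv -2\pmod k$. Reducing modulo any prime $p\mid k$ gives $\left(\frac{-2}{p}\right)=1$, and the evaluation $\left(\frac{-2}{p}\right)=\left(\frac{-1}{p}\right)\left(\frac{2}{p}\right)=(-1)^{(p-1)/2}(-1)^{(p^{2}-1)/8}$ shows this holds precisely when $p\equiv 1$ or $3\pmod 8$, i.e. precisely when $p$ is not of the form $8n+5$ or $8n+7$. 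Conversely, if no prime divisor of $k$ lies in $\{5,7\}\pmod 8$, then for each prime power $p^{a}\,\|\,k$ the number $-2$ is a quadratic residue mod $p$, hence (as $p$ is odd and $p\nmid 2$) mod $p^{a}$ by Hensel's lemma, and by the Chinese remainder theorem $-2$ is a quadratic residue mod $k$; so $t^{2}\equiv -2\pmod k$ is solvable.

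It then remains to show: if $t^{2}\equiv-2\pmod k$ then $k$ is primitively represented by $x^{2}+2y^{2}$. Writing $t^{2}+2=km$ with $m\in\mathbb{Z}$, $m>0$, consider the integral binary quadratic form $f(x,y)=kx^{2}+2txy+my^{2}$, of discriminant $(2t)^{2}-4km=4(t^{2}-km)=-8$. It is positive definite (since $k>0$ and the discriminant is negative), primitive (a prime dividing all coefficients would have its square dividing $-8$, forcing $p=2$, impossible as $k$ is odd), and it represents $k$ primitively via $(x,y)=(1,0)$. Since the class number of discriminant $-8$ is $1$ — the only reduced positive definite form of discriminant $-8$ being $x^{2}+2y^{2}$, as one verifies directly by running over $(a,b,c)$ with $b^{2}-4ac=-8$ and $|b|\le a\le c$ — the form $f$ is $\mathrm{SL}_{2}(\mathbb{Z})$-equivalent to $x^{2}+2y^{2}$, and an equivalence carries the primitive representing vector $(1,0)$ over to a primitive representation of $k$ by $x^{2}+2y^{2}$. (A self-contained alternative for this step applies Minkowski's lattice-point theorem to the index-$k$ sublattice $\{(x,y)\in\mathbb{Z}^{2}: x\equiv ty\pmod k\}$, on which $x^{2}+2y^{2}\equiv(t^{2}+2)y^{2}\equiv 0\pmod k$, producing a nonzero lattice point with $0<x^{2}+2y^{2}<2k$, hence $=k$; primitivity is then recovered by dividing out the gcd and descending.)

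The main obstacle is exactly this sufficiency direction: it requires a genuine input from the reduction theory of binary quadratic forms, namely that $h(-8)=1$ together with the fact that $\mathrm{SL}_{2}(\mathbb{Z})$-equivalent forms represent the same integers with corresponding primitivity — or, in the geometry-of-numbers route, careful bookkeeping of the constant in Minkowski's theorem and of the primitivity condition. Everything else (the passage to $t^{2}\equiv-2\pmod k$, Hensel's lemma, the Chinese remainder theorem, and the Legendre-symbol computation pinning the admissible residues to $1,3\pmod 8$) is routine.
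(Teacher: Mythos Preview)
The paper does not prove this theorem at all: it is quoted from Buchshtab's textbook \cite{Buh} as background number theory, with no argument supplied. So there is nothing to compare against on the paper's side.

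Your proof is correct and is the standard one. The necessity direction is clean: from $k=x^{2}+2y^{2}$ with $\gcd(x,y)=1$ and $k$ odd you correctly deduce $\gcd(y,k)=1$, hence $t^{2}\equiv-2\pmod{k}$, and the Legendre-symbol computation $\left(\frac{-2}{p}\right)=(-1)^{(p-1)/2}(-1)^{(p^{2}-1)/8}$ pins the allowed primes to $p\equiv 1,3\pmod{8}$. For sufficiency, your use of Hensel and the Chinese remainder theorem to solve $t^{2}\equiv-2\pmod{k}$, followed by passing to the form $kx^{2}+2txy+my^{2}$ of discriminant $-8$ and invoking $h(-8)=1$, is exactly the classical route. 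The primitivity check is right (any common prime divisor of $k$, $2t$, $m$ is odd since $k$ is odd, and then its square divides $(2t)^{2}-4km=-8$, which is impossible), and the enumeration of reduced forms of discriminant $-8$ is immediate. One small cosmetic point: in your Minkowski alternative, the final clause ``primitivity is then recovered by dividing out the gcd and descending'' deserves a sentence more --- dividing out $d=\gcd(x,y)$ yields $k/d^{2}=x_{0}^{2}+2y_{0}^{2}$ with $\gcd(x_{0},y_{0})=1$, and since $k/d^{2}$ again has only prime factors $\equiv 1,3\pmod{8}$ you may iterate, but it is cleaner simply to stick with the quadratic-form argument, which delivers primitivity directly.
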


\begin{theorem} \cite{Ven}
For any $k\in\mathbb{N}$ the number $N_2^{\prime}(k)$ of all solutions to the equation (\ref{square2}) is equal to doubled difference of quantities (of natural) divisors $d$ of $k$ such that $d\equiv 1(\Mod 8)$ or $d\equiv 3(\Mod 8)$ and divisors $d$ of $k$ such that
$d\equiv 5(\Mod 8)$ or $d\equiv 7(\Mod 8)$.
\end{theorem}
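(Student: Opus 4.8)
The plan is to reduce the representation count to the arithmetic of the ring $R=\mathbb{Z}[\sqrt{-2}]$, whose norm form is precisely $N(x+y\sqrt{-2})=x^{2}+2y^{2}$; thus $N_{2}^{\prime}(k)$ equals the number of $\alpha\in R$ with $N(\alpha)=k$. The structural fact I would establish first is that $R$ is a Euclidean domain with respect to the norm, hence a principal ideal domain, with unit group $\{\pm 1\}$. Then the elements of norm $k$ fall into pairs $\{\alpha,-\alpha\}$, each pair generating one and the same ideal, while conversely every ideal $\mathfrak a\subseteq R$ with $N(\mathfrak a)=k$ is principal and arises this way; hence $N_{2}^{\prime}(k)=2a(k)$, where $a(k)$ denotes the number of ideals of $R$ of norm $k$.

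Next I would compute $a(k)$ using unique factorization of ideals, which makes $a(k)$ a multiplicative function of $k$, so it suffices to evaluate it on prime powers. Since $\mathbb{Q}(\sqrt{-2})$ has discriminant $-8$, I would record the splitting type of a rational prime $p$ in $R$: the prime $2$ ramifies, $(2)=(\sqrt{-2})^{2}$; an odd $p$ splits into two prime ideals of norm $p$ when the Kronecker symbol $\chi(p):=\left(\tfrac{-8}{p}\right)$ equals $1$, and is inert (so only $(p)^{a/2}$ has norm $p^{a}$) when $\chi(p)=-1$. The supplementary laws of quadratic reciprocity give $\chi(p)=1$ exactly for $p\equiv 1,3\ (\Mod 8)$ and $\chi(p)=-1$ exactly for $p\equiv 5,7\ (\Mod 8)$. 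Counting ideals of norm $p^{a}$ in each case yields $a(p^{a})=a+1$ when $p$ splits, $a(p^{a})=1$ for even $a$ and $0$ for odd $a$ when $p$ is inert, and $a(2^{a})=1$.

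On the divisor side, extend $\chi$ to a completely multiplicative arithmetic function by setting $\chi(d)=0$ for even $d$ (it is then the Kronecker symbol $\left(\tfrac{-8}{\cdot}\right)$, a Dirichlet character modulo $8$ with $\chi(1)=\chi(3)=1$, $\chi(5)=\chi(7)=-1$). The "doubled difference" in the statement is $2D(k)$ with $D(k)=\sum_{d\mid k}\chi(d)$, and $D$ is multiplicative, being the divisor sum of a multiplicative function. Evaluating on prime powers gives $\sum_{i=0}^{a}\chi(p)^{i}=a+1$ when $\chi(p)=1$, equals $1$ or $0$ according as $a$ is even or odd when $\chi(p)=-1$, and equals $1$ when $p=2$. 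These values coincide with the $a(p^{a})$ above, so by multiplicativity $a(k)=D(k)$ for every $k\in\mathbb{N}$, whence $N_{2}^{\prime}(k)=2a(k)=2D(k)$, which is the assertion.

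The step I expect to be the main obstacle, or at least the one demanding the most care, is the passage from counting elements of norm $k$ to counting ideals of norm $k$: it rests on $\mathbb{Z}[\sqrt{-2}]$ being a PID, which must itself be proved — most cheaply by verifying the Euclidean property of the norm directly — together with the check that the prime ideals lying over a split or ramified $p$ have norm exactly $p$. Everything after that is the routine bookkeeping of multiplicative functions together with the elementary evaluation of $\left(\tfrac{-2}{p}\right)$ by the supplementary laws, which is what pins down the residue classes $1,3,5,7\pmod 8$ featured in the statement.
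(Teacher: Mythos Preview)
Your argument is correct and complete in outline: the identification of $N_2'(k)$ with the number of elements of norm $k$ in $\mathbb{Z}[\sqrt{-2}]$, the passage to ideals via the unit group $\{\pm1\}$ and the PID property, the splitting law governed by $\left(\frac{-2}{p}\right)$, and the match with the multiplicative divisor sum $\sum_{d\mid k}\chi(d)$ for the Kronecker character modulo $8$ are all sound and fit together as you describe.

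There is, however, nothing to compare against: the paper does not prove this theorem but simply quotes it from Venkov's \emph{Elementary Number Theory}. So your write-up supplies a proof where the paper offers only a citation. For what it is worth, Venkov's own treatment proceeds in the classical language of reduced binary quadratic forms and genus/character theory rather than through the ideal theory of $\mathbb{Z}[\sqrt{-2}]$; your route is the modern equivalent and is arguably cleaner, since the Euclidean property of $\mathbb{Z}[\sqrt{-2}]$ replaces the form-reduction machinery, and the single nontrivial input---the evaluation of $\left(\frac{-2}{p}\right)$ via the supplementary laws---is exactly what produces the residue classes $1,3$ versus $5,7$ modulo $8$ in the statement.
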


\begin{theorem}
\cite{E}
If natural numbers $k_1,$ $k_2$ admit presentation of the form (\ref{square1}) (respectively, (\ref{square2})), then also the number $k=k_1\cdot k_2$ admits presentation of the form (\ref{square1}) (respectively, (\ref{square2})).
\end{theorem}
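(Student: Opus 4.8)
The plan is to prove both assertions by exhibiting an explicit algebraic identity, namely the classical Brahmagupta--Fibonacci identity in the first case and its twisted analogue in the second, which expresses the product of two numbers of the prescribed shape again in that shape. Equivalently, one may phrase the whole argument as multiplicativity of the norm form on a suitable quadratic ring.

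First I would treat the case of the form (\ref{square1}). Write $k_1 = a^2 + b^2$ and $k_2 = c^2 + d^2$ with $a,b,c,d \in \mathbb{Z}$, and verify by direct expansion the identity
$$(a^2+b^2)(c^2+d^2) = (ac-bd)^2 + (ad+bc)^2,$$
where on the right the mixed terms $\pm 2abcd$ cancel. Setting $x = ac-bd \in \mathbb{Z}$ and $y = ad+bc \in \mathbb{Z}$ yields $k = k_1 k_2 = x^2+y^2$, so $k$ admits a presentation of the form (\ref{square1}). Conceptually this is nothing but the multiplicativity of the norm $N(z) = z\bar z$ on the ring of Gaussian integers $\mathbb{Z}[i]$: if $k_1 = N(z_1)$ and $k_2 = N(z_2)$ with $z_1, z_2 \in \mathbb{Z}[i]$, then $k_1 k_2 = N(z_1 z_2)$, and $N$ takes exactly the values $x^2 + y^2$.

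For the form (\ref{square2}) I would argue in the same way using the quadratic ring $\mathbb{Z}[\sqrt{-2}]$, whose elements $a + b\sqrt{-2}$ have norm $a^2 + 2b^2$. Writing $k_1 = a^2 + 2b^2$ and $k_2 = c^2 + 2d^2$, the identity to check is
$$(a^2 + 2b^2)(c^2 + 2d^2) = (ac - 2bd)^2 + 2(ad + bc)^2,$$
again obtained by expanding both sides and observing that the terms $\pm 4abcd$ cancel. Putting $x = ac - 2bd \in \mathbb{Z}$ and $y = ad + bc \in \mathbb{Z}$ shows that $k = k_1 k_2$ has the form (\ref{square2}).

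Since in each case the statement reduces to a single polynomial identity, there is essentially no obstacle; the only thing to keep in mind is to write the identity so that the new coordinates $x$ and $y$ are manifestly integers, which the formulas above do. (I note that the cited result for $x^2 + 2y^2$ in \cite{Buh} also carries a coprimality hypothesis $\mathrm{GCD}(x,y)=1$; tracking that through the product would require a small additional argument, but for the bare representability statement asserted here the identities suffice.)
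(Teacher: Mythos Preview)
Your proof is correct: the Brahmagupta--Fibonacci identity and its $x^2+2y^2$ analogue are exactly the classical arguments, and the computations check out. Note, however, that the paper does not give its own proof of this theorem at all---it is merely cited from \cite{E} (Euler's unpublished materials on number theory) as a known result---so there is no in-paper proof to compare against; your argument supplies the standard justification that the paper omits.
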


\begin{theorem} \cite{Dev}
A natural number $k$ can be presented in the form
\begin{equation}
\label{three}
k=x^2+y^2+z^2,\quad x,\,y,\,z\in\mathbb{Z},
\end{equation}
if and only if $k$ cannot be presented in the form $4^{m}(8l+7)$, where $m,$ $l\in\mathbb{Z}_{+}$.
\end{theorem}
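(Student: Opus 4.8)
The plan is to prove the two implications separately; necessity is elementary, while sufficiency is the substantive classical theorem of Legendre and Gauss.

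\emph{Necessity.} Suppose $k=x^2+y^2+z^2$. Since every square is congruent to $0$, $1$ or $4$ modulo $8$, a sum of three squares is never $\equiv 7(\Mod 8)$, which settles the case $m=0$ of the excluded form $4^{m}(8l+7)$. For $m\geq 1$ I would argue by descent on $m$: if $4^{m}(8l+7)=x^2+y^2+z^2$, then, reading the equation modulo $4$ and using that a square is $\equiv 0$ or $1(\Mod 4)$ while the left side is $\equiv 0(\Mod 4)$, all three of $x^2,y^2,z^2$ must be $\equiv 0(\Mod 4)$, so $x,y,z$ are even and $4^{m-1}(8l+7)=(x/2)^2+(y/2)^2+(z/2)^2$; iterating $m$ times reduces to the already-excluded case $m=0$. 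Hence no number of the form $4^{m}(8l+7)$ is a sum of three squares.

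\emph{Sufficiency.} Assume $k$ is not of the form $4^{m}(8l+7)$ and write $k=4^{m}t$ with $4\nmid t$. Since $t=x^2+y^2+z^2$ implies $k=(2^{m}x)^2+(2^{m}y)^2+(2^{m}z)^2$, it suffices to represent $t$, and because $4\nmid t$ and $k$ is not of the excluded form one checks $t\not\equiv 0,4,7(\Mod 8)$. Writing $t=u^2a$ with $a$ squarefree, $u$ must be odd (otherwise $4\mid t$), so $a\equiv t(\Mod 8)$ and the problem reduces to showing: every squarefree $a\geq 1$ with $a\not\equiv 7(\Mod 8)$ is a sum of three squares. The key step is rational representability. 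By the Hasse--Minkowski theorem the quaternary form $x^2+y^2+z^2-aw^2$ has a nontrivial rational zero if and only if it has one over $\mathbb{R}$ and over each $\mathbb{Q}_p$; positivity of $a$ handles the real place, a short Hilbert-symbol computation using that $a$ is squarefree handles every odd $p$, and at $p=2$ the form is isotropic over $\mathbb{Q}_2$ precisely because $a\not\equiv 7(\Mod 8)$. One arranges the resulting zero to have $w\neq 0$, obtaining $a=(x/w)^2+(y/w)^2+(z/w)^2$.

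Finally I would pass from a rational representation to an integral one via the Davenport--Cassels lemma: the form $f(x,y,z)=x^2+y^2+z^2$ is positive definite and satisfies the property that every point of $\mathbb{Q}^3$ lies at $f$-distance strictly less than $1$ from some lattice point, hence any natural number it represents over $\mathbb{Q}$ it already represents over $\mathbb{Z}$. Applying this to $a$, then multiplying the representation by $u^2$ and by $4^{m}$, yields the required representation of $k$. The main obstacle is the sufficiency direction --- concretely, the $2$-adic analysis identifying the local obstruction with exactly $a\equiv 7(\Mod 8)$ and the rational-to-integral passage; a more elementary alternative to Hasse--Minkowski combines Dirichlet's theorem on primes in arithmetic progressions with the two-square theorem (Theorem~\ref{sq}) to produce an auxiliary prime, at the cost of a delicate residue-class case analysis.
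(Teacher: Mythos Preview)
The paper does not supply a proof of this theorem at all: it is quoted as a classical result with a bare citation to Davenport's textbook \cite{Dev}, and is used only as input to the subsequent counting arguments. So there is nothing to compare your argument against on the paper's side.

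Your outline itself is sound. The necessity direction is the standard residue argument and descent, and is complete as written. For sufficiency you give the modern proof (essentially the one in Serre's \emph{A Course in Arithmetic}): reduce to squarefree $a\not\equiv 7\ (\Mod 8)$, invoke Hasse--Minkowski for the quaternary form $x^2+y^2+z^2-aw^2$ to get a rational representation (the only nontrivial local check is at $p=2$, where the condition $a\not\equiv 7\ (\Mod 8)$ is exactly what is needed), and then apply the Davenport--Cassels lemma, which applies because the nearest-lattice-point distance for $x^2+y^2+z^2$ is at most $3/4<1$. One small point worth making explicit: over $\mathbb{Q}$ any nontrivial zero of $x^2+y^2+z^2-aw^2$ automatically has $w\neq 0$, since $x^2+y^2+z^2$ is anisotropic over $\mathbb{Q}$; you assert this but do not say why. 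The alternative route you mention at the end --- Dirichlet on primes in arithmetic progressions plus the two-square theorem --- is in fact closer to the argument actually given in Davenport's book, so if you wanted to match the cited source rather than give the slickest proof, that would be the one to flesh out.
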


\begin{proposition}\label{AAA}
1. If a natural number $k$ can be presented in the form (\ref{square1}), where $x\equiv y\equiv 1(\Mod 2)$, then $k\equiv 2(\Mod 8)$.
Conversely, if $k\equiv 2(\Mod 8)$ and $k$ can be presented in the form (\ref{square1}), then $x\equiv y\equiv 1(\Mod 2)$.

2. If a natural number $k$ can be presented in the form (\ref{three}), where $x\equiv y\equiv z\equiv 1(\Mod 2)$, then $k\equiv 3(\Mod 8)$.
Conversely, if $k\equiv 3(\Mod 8)$ and $k$ can be presented in the form (\ref{three}), then $x\equiv y\equiv z\equiv 1(\Mod 2)$.
\end{proposition}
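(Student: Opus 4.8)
The plan is to reduce both parts to a single elementary fact about squares modulo $8$, and then to run through the possible parities of the summands. First I would record the observation that an odd integer $x=2m+1$ satisfies $x^2=4m(m+1)+1\equiv 1(\Mod 8)$ (since $m(m+1)$ is even), while an even integer $x=2m$ satisfies $x^2=4m^2\equiv 0(\Mod 8)$ if $m$ is even and $x^2\equiv 4(\Mod 8)$ if $m$ is odd; in particular the square of an even integer lies in $\{0,4\}$ modulo $8$. Everything else is bookkeeping with these residues.

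For assertion 1, the direct implication is immediate: if $x\equiv y\equiv 1(\Mod 2)$ then $x^2+y^2\equiv 1+1=2(\Mod 8)$, so $k\equiv 2(\Mod 8)$. For the converse, suppose $k\equiv 2(\Mod 8)$ and $k=x^2+y^2$. Then $k$ is even, hence $x$ and $y$ have the same parity; they cannot both be even, for then $k=4(a^2+b^2)$ would be divisible by $4$, contradicting $k\equiv 2(\Mod 4)$. Therefore both $x$ and $y$ are odd.

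For assertion 2, the direct implication is again immediate: if $x\equiv y\equiv z\equiv 1(\Mod 2)$ then $x^2+y^2+z^2\equiv 1+1+1=3(\Mod 8)$. For the converse, suppose $k\equiv 3(\Mod 8)$ and $k=x^2+y^2+z^2$. Since $k$ is odd, the number of odd terms among $x,y,z$ is odd, i.e. either one or three. If exactly one of them, say $x$, is odd and $y,z$ are even, then $x^2\equiv 1(\Mod 8)$ while $y^2,z^2\in\{0,4\}$ modulo $8$, so $x^2+y^2+z^2$ is congruent to $1$, $5$, or $9\equiv 1$ modulo $8$ — never to $3$. This contradiction forces all three of $x,y,z$ to be odd.

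There is no serious obstacle here; the argument is pure modular arithmetic. The only point deserving a little care is making the case analysis on parities exhaustive — in part 1, ruling out the "both even" case via divisibility by $4$, and in part 2, first noting that an odd value of $x^2+y^2+z^2$ forces exactly one or three odd summands, and then eliminating the "one odd" case by checking that the even squares contribute only $0$ or $4$ modulo $8$.
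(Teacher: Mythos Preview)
Your proof is correct and follows exactly the same approach as the paper: the paper's proof consists of the single sentence that odd squares are $\equiv 1(\Mod 8)$ while even squares are $\equiv 0$ or $4(\Mod 8)$, and leaves the case analysis implicit. You have simply spelled out those cases explicitly, which is fine.
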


\begin{proof}
These statements follow from the fact that the square of any odd number gives residue $1$ under division by $8$, while the square of any even number
gives residue $0$ or $4$ under division by $8$.
\end{proof}

Later we shall need the notion of \textit{the Legendre--Jacobi symbol}. There are different definitions of this symbol. Here is given the most simple its definition, taken from \cite{C} and belonging to Russian mathematician E.I.~Zolotarev (1847--1878).

\begin{definition}
Let $n>1$ be an odd natural number, $a$ be an integer number, coprime with $n.$ The Legendre--Jacobi
symbol $\left(\frac{a}{n}\right)$ is the sign of permutation on residue ring $\Mod n$ obtained by multiplication of this ring by $a\Mod n.$
\end{definition}

\begin{theorem} \cite{Ven}
1. Let $k\in\mathbb{N}$, $k=1, 2(\Mod\,\,4)$, $k\neq 1$. Then the number $\psi(k)$ of proper presentations of $k$ in the form (\ref{three}) is finite and equal to $12h(k)$, where
\begin{equation}
\label{h}
h(k)=\sum\left(-\frac{k}{a}\right),
\end{equation}
and the summation is taken for all $a$ such that $a\in\mathbb{N}$, $0<a<k$, $a$ is coprime with
$2k$; $\left(-\frac{k}{a}\right)$ is the Legendre--Jacobi symbol.

2. Let $k\in\mathbb{N}$, $k=3(\Mod\,\,8)$, $k\neq 3$. Then the number $\psi(k)$ of proper presentations of $k$ in the form (\ref{three}) is finite and equal to $24h^{\prime}(k)$, where
\begin{equation}
\label{hprime}
h^{\prime}(k)=\frac{1}{3}\sum\left(\frac{b}{k}\right),
\end{equation}
and the summation is taken for all $b$ such that $b\in\mathbb{N}$, $0<b<k$, $b$ is coprime with  $2k$; $\left(\frac{b}{k}\right)$ is the Legendre--Jacobi symbol.
\end{theorem}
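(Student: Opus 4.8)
Since this is Gauss's classical theorem on proper representations by $x^2+y^2+z^2$ (recast analytically by Dirichlet, and given a quaternion-arithmetic proof by Venkov), the plan is: attach to each proper representation a positive-definite binary quadratic form, show this induces an essentially bijective correspondence with a full genus of such forms, count the fibers, sum over classes, and finally convert the resulting class number into a finite character sum. Finiteness of $\psi(k)$ is immediate, since proper representations are integer points on the sphere of radius $\sqrt{k}$.

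Write $Q(\mathbf{x})=x^2+y^2+z^2$ on $\mathbb{Z}^3$. Given a primitive $\mathbf{v}\in\mathbb{Z}^3$ with $Q(\mathbf{v})=k$, set $L_{\mathbf{v}}=\mathbf{v}^{\perp}\cap\mathbb{Z}^3$ and let $q_{\mathbf{v}}=Q|_{L_{\mathbf{v}}}$, a positive-definite integral binary quadratic form. Comparing Gram determinants through $[\mathbb{Z}^3:\mathbb{Z}\mathbf{v}\oplus L_{\mathbf{v}}]^2=k\cdot\operatorname{disc}q_{\mathbf{v}}$ and computing that index at the prime $2$, one finds $q_{\mathbf{v}}$ has discriminant $-4k$ when $k\equiv 1,2\ (\Mod 4)$, while for $k\equiv 3\ (\Mod 8)$ one passes to the odd-index overlattice and gets a primitive form of discriminant $-k$; this is exactly the case split in the statement, and the congruence of $k$ modulo $8$ is forced by the $2$-adic structure of $\mathbb{Z}^3$. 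The group $\mathrm{O}_3(\mathbb{Z})$ of signed permutation matrices (order $48$) acts on proper representations, and its orbits, together with the automorphism group of $q_{\mathbf{v}}$, account for the combinatorial overcounting that produces the universal constants $12$ and $24$.

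I would then show that, after passage to $\mathrm{SL}_2(\mathbb{Z})$-classes of binary forms, the correspondence $\mathbf{v}\mapsto q_{\mathbf{v}}$ is onto a full genus and has constant fiber size. Surjectivity comes from the reverse construction --- realizing a given binary form of the correct discriminant as $\mathbf{v}^{\perp}\cap\mathbb{Z}^3$ for a suitable primitive $\mathbf{v}$ --- which works because the genus of $Q$ contains only the class of $Q$ itself (so the ambient ternary lattice is always $\mathbb{Z}^3$), combined with the Hasse--Minkowski principle for ternary forms; the latter also re-derives the exclusion of $k=4^{m}(8\ell+7)$ established in the three-squares theorem above, since those $k$ are precisely the ones failing the $2$-adic solvability condition. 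Summing the constant fiber contribution over the classes yields $\psi(k)=12\,h(-4k)$ for $k\equiv 1,2\ (\Mod 4)$ and $\psi(k)=24\,h(-k)$ for $k\equiv 3\ (\Mod 8)$. It then remains to rewrite the class number as the stated finite sum: this is a form of Dirichlet's class number formula, and, for odd $a$ coprime to $k$, the Kronecker symbol $\left(\frac{-4k}{a}\right)$ coincides with the Legendre--Jacobi symbol $\left(-\frac{k}{a}\right)$ of the theorem, so restricting the sum to residues coprime to $2k$ and reorganizing via quadratic reciprocity produces $h(k)=\sum_{0<a<k,\ \gcd(a,2k)=1}\left(-\frac{k}{a}\right)$, and likewise $h'(k)=\frac{1}{3}\sum_{0<b<k,\ \gcd(b,2k)=1}\left(\frac{b}{k}\right)$.

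The real obstacle is the $2$-adic bookkeeping: pinning down the exact discriminant and genus of $q_{\mathbf{v}}$ as a function of $k\bmod 8$, verifying that the correspondence meets every class of the principal genus with the same multiplicity, and checking that the stabilizer-versus-automorphism contributions combine precisely to $12$ and $24$ rather than to a $k$-dependent quantity. In Venkov's approach this is replaced by --- though it is essentially equivalent to --- the arithmetic of the Hurwitz order in the Hamilton quaternions and its reduction modulo $2$; in either case a uniform argument requires case-splitting on $k\bmod 8$ and carrying out the count over $\mathbb{Z}_2$. (A different route is the Siegel--Weil mass formula, identifying the weight-$3/2$ theta series of $Q$ with the genus Eisenstein series whose Fourier coefficients carry the class numbers; but for a rank-three form this identity lies at the edge of convergence and still relies on the single-class-genus fact, so it does not avoid the hard point.)
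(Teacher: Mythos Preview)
The paper does not prove this theorem at all: it is stated with the citation \cite{Ven} and no argument is given, because the authors are simply importing a classical result from Venkov's textbook for later use in counting highest weights. So there is nothing in the paper to compare your proposal against.

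That said, your sketch is a faithful outline of the classical Gauss--Dirichlet--Venkov argument (attach to each primitive representation the binary form on the orthogonal complement, identify the resulting genus and discriminant by a $2$-adic case split on $k\bmod 8$, count fibers via the action of $\mathrm{O}_3(\mathbb{Z})$ and the automorphisms of the binary form, and then invoke Dirichlet's class-number formula to rewrite the class number as the stated character sum). The points you flag as delicate --- the exact constants $12$ and $24$ from the stabilizer bookkeeping, surjectivity onto the full genus, and the identification of the Kronecker/Jacobi symbols in the final sum --- are indeed where the work lies, and your sketch does not actually carry them out; but as an outline it is correct in spirit. If you intend to supply a self-contained proof, you would need to fill in precisely those computations (or, as you note, follow Venkov's quaternion-order route); for the purposes of this paper, however, a citation suffices.
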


Let us define function $F(k)$, $k\in\mathbb{N}$, by the following rule

1) If  $k=(2m-1)^2$ for some $m\in\mathbb{N}$, then
$$F(k)=\sum h(k/\delta^2)-\frac{1}{2};$$

2) If $k\neq (2m-1)^2$ for any $m\in\mathbb{N}$, then
$$F(k)=\sum h(k/\delta^2).$$
In both cases the summation is taken by all square divisors $\delta^2$ of the number $k$.

\begin{theorem} \cite{Ven}
Let $k\in\mathbb{N}$, $N_3(k)$ be a number of all presentations of the number $k$ in the form (\ref{three}). Then the following statements are valid.

1. If $k=1, 2(\Mod\,\,4)$, $k\neq 1$, then $N_3(k)=12F(k)$.

2. If $k=3(\Mod\,\,8)$, $k\neq 3$, then  $N_3(k)=8F(k)$.

3. If $k=7(\Mod\,\,8)$ then $N_3(k)=0$.

4. If $k=0(\Mod\,\,4)$ then $N_3(k)=N_3(k/4)$.
\end{theorem}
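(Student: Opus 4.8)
The plan is to reduce the counting of all representations to the counting of \emph{proper} (primitive) ones, for which the preceding theorem supplies closed formulas, and then to sum those formulas over the square divisors of $k$. The basic device is the standard bijection: if $x^2+y^2+z^2=k$ and $\delta=\gcd(x,y,z)$, then $\delta^2\mid k$ and $(x/\delta,y/\delta,z/\delta)$ is a proper representation of $k/\delta^2$, while conversely scaling any proper representation of $k/\delta^2$ by $\delta$ produces a representation of $k$ of gcd exactly $\delta$. Grouping the representations of $k$ by their gcd yields $N_3(k)=\sum_{\delta^2\mid k}\psi(k/\delta^2)$, where for the two values excluded from the hypothesis of the preceding theorem one inserts $\psi(1)=6$ and $\psi(3)=8$ directly.

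For parts 1 and 2 I would substitute the formulas $\psi(m)=12h(m)$ (valid for $m\equiv1,2\pmod4$, $m\neq1$) and $\psi(m)=24h'(m)$ (valid for $m\equiv3\pmod8$, $m\neq3$). The elementary point that makes this go through is that dividing by an odd square changes nothing modulo $8$: in both cases $k$ is odd, so every $\delta$ with $\delta^2\mid k$ is odd, $\delta^2\equiv1\pmod8$, whence $k/\delta^2\equiv k\pmod8$; and if $k\equiv2\pmod4$ then $\delta$ is again odd and $k/\delta^2\equiv2\pmod4$. Consequently, in case 1 every summand is $12h(k/\delta^2)$, with the single exception of the summand $k/\delta^2=1$, which occurs exactly when $k=(2m-1)^2$: there the correct contribution is $\psi(1)=6$, i.e.\ effectively $12\cdot\tfrac12$, whereas the bare sum $\sum h(k/\delta^2)$ appearing in the definition of $F$ assigns that spot the value $h(1)=1$; this discrepancy is precisely the origin of the $-\tfrac12$ in the definition of $F(k)$, and one obtains $N_3(k)=12F(k)$. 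In case 2 every summand is $24h'(k/\delta^2)$; invoking the classical class-number identity $h(m)=3h'(m)$ on the residue class $m\equiv3\pmod8$ (a reciprocity / genus-theory fact, which in any case accompanies the $\psi$-formula in Venkov's book) converts $\sum24h'(k/\delta^2)$ into $8\sum h(k/\delta^2)=8F(k)$; here no square correction is needed, since an odd square is $\equiv1$, never $\equiv3$, modulo $8$.

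Part 3 is immediate from the earlier characterization of sums of three squares: $k\equiv7\pmod8$ has the form $4^0(8l+7)$, so $k$ admits no representation at all and $N_3(k)=0$. Part 4 is the familiar $2$-adic descent: if $4\mid k$ and $x^2+y^2+z^2=k$, then reducing modulo $4$ and using that a square is $\equiv0$ or $1\pmod4$ forces $x\equiv y\equiv z\equiv0\pmod2$, so $(x,y,z)\mapsto(x/2,y/2,z/2)$ is a bijection between the representations of $k$ and those of $k/4$; hence $N_3(k)=N_3(k/4)$.

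The step I expect to be the main obstacle is case 2: the arithmetic itself is routine, but expressing \emph{both} parts through the single function $F$ forces one to pass from the primitive count $24h'$ to $h$ via the nontrivial identity $h=3h'$ on the class $3\pmod8$; and one must verify that the boundary conventions ($\psi(1)=6$, $\psi(3)=8$, together with the value $h(1)=1$ tacitly used inside $F$) are matched exactly by the $-\tfrac12$ correction. Everything else is bookkeeping.
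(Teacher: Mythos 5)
The paper does not prove this statement at all: it is quoted verbatim from Venkov's book (the citation \cite{Ven}), so there is no internal argument to compare yours against. Judged on its own, your reconstruction follows the standard route and its skeleton is sound: the grouping of representations by $\gcd$ giving $N_3(k)=\sum_{\delta^2\mid k}\psi(k/\delta^2)$, the observation that dividing by an odd square preserves the residue class modulo $8$ (so every quotient stays in the residue class where the relevant $\psi$-formula applies), the mod $8$ obstruction for part 3, and the $2$-adic descent $x\equiv y\equiv z\equiv 0\ (\Mod 2)$ for part 4 are all correct.

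Two caveats keep this from being a complete proof from the material quoted in the paper. First, as you yourself flag, case 2 rests on the identity $h(m)=3h'(m)$ for $m\equiv 3\ (\Mod 8)$ (equivalently $h(-4m)=3h(-m)$ in class-number language). This is a genuine classical fact, but it is nowhere among the results quoted in the paper, so at this point your argument is an appeal to external theory rather than a derivation; to be self-contained one must either prove it or cite it explicitly alongside the $\psi$-formulas. Second, the boundary bookkeeping is only half carried out: you resolve the term $k/\delta^2=1$ (showing $\psi(1)=6=12\bigl(h(1)-\tfrac12\bigr)$, which also requires the convention $h(1)=1$, since the literal empty sum in formula (\ref{h}) would give $0$), but in case 2 the term $k/\delta^2=3$, arising whenever $k=3\delta^2$, is likewise excluded from the quoted $\psi$-theorem and must be checked separately; it works because $\psi(3)=8$ and $h(3)=1$, so $\psi(3)=8h(3)$ and no correction to $F$ is needed there. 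With these two points supplied, the derivation is correct.
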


\begin{theorem}
\label{3sq}
Assume that a natural number $k$ can be presented in the form
\begin{equation}
\label{squared3}
k=x^2+y^2+z^2,\quad x<y<z,\,\,x,\,y,\,z\in\mathbb{N},
\end{equation}
$L_3(k)$ is the quantity of such presentations. Then

1. If $k\neq\alpha m^2$ for any $m\in\mathbb{N}$, $\alpha=1,2,3$, then
$L_3(k)=\left(N_3(k)-3N_2(k)-6N_2^{\prime}(k)\right)/48.$

2. If $k=m^2$ for some $m\in\mathbb{N}$ then
$L_3(k)=\left(N_3(k)-3N_2(k)-6N_2^{\prime}(k)+18\right)/48.$

3. If $k=2m^2$ for some $m\in\mathbb{N}$ then
$L_3(k)=\left(N_3(k)-3N_2(k)-6N_2^{\prime}(k)+12\right)/48.$

4. If $k=3m^2$ for some $m\in\mathbb{N}$ then
$L_3(k)=\left(N_3(k)-6N_2^{\prime}(k)+16\right)/48.$
\end{theorem}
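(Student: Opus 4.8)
The statement is a counting identity relating $L_3(k)$ — the number of representations of $k$ as an ordered triple $x<y<z$ of positive integers — to the unrestricted count $N_3(k)$ of integer solutions $(x,y,z)\in\mathbb{Z}^3$, corrected by the binary counts $N_2(k)$ and $N_2'(k)$. The approach is a straightforward orbit-counting (inclusion–exclusion) argument on the action of the signed permutation group on the set of integer solutions. First I would fix $k$ admitting a representation of the form (\ref{squared3}) and stratify the set $S=\{(x,y,z)\in\mathbb{Z}^3 : x^2+y^2+z^2=k\}$, which has cardinality $N_3(k)$, according to how many coordinates are zero and how many distinct absolute values occur. A solution with all $|x|,|y|,|z|$ distinct and all nonzero contributes an orbit of size $48 = 2^3\cdot 3!$ under sign changes and permutations, and exactly one representative with $0<x<y<z$; so the "generic" part of $S$ has size $48 L_3(k)$.

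**The correction terms.** Next I would account for the degenerate strata. A solution with exactly one coordinate zero and the two nonzero ones of distinct absolute value corresponds to a representation $k=u^2+v^2$ with $0<u<v$; each such gives a $\mathbb Z^3$-orbit, and summing over the three choices of which coordinate vanishes shows these contribute $3N_2(k)$ when counted as in Theorem \ref{sq} (the factor reconciling $N_2$, which counts ordered signed pairs including zero and equal entries, with the orbit sizes is exactly what the $3$ absorbs after the special cases are separated). A solution with two equal nonzero absolute values, say $|x|=|y|\ne|z|$, $z\ne0$, corresponds to $k=2u^2+v^2$ with $u,v>0$; summing over the three positions gives the $6N_2'(k)$ term. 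Thus for $k$ not of the special shapes one gets $N_3(k) = 48L_3(k) + 3N_2(k) + 6N_2'(k)$, which is part 1. For parts 2–4 one must further subtract the contributions of the fully-degenerate solutions: if $k=m^2$ the six points $(\pm m,0,0)$ and permutations lie in $S$ but are already (over)counted inside $N_2(k)$ in a way that forces the $+18$; if $k=2m^2$ the points of type $(\pm m,\pm m,0)$ force the $+12$; and if $k=3m^2$ the eight points $(\pm m,\pm m,\pm m)$ are not seen by $N_2$ at all (a sum of three odd equal squares is $\equiv3\pmod8$ and not a sum of two squares with the relevant congruence), which both kills the $3N_2(k)$ term — replaced by nothing — and forces the $+16$.

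**Main obstacle.** The genuine bookkeeping difficulty is not the generic orbit count but the precise reconciliation of the ambient counts $N_2(k)$, $N_2'(k)$, $N_3(k)$ — which by the cited theorems of Venkov already include signs, zeros, and coincident entries with their natural multiplicities — against the strata I have carved out, so that no solution is counted with the wrong weight. I would handle this by writing $N_3(k) = \sum_{\text{strata}} |\text{stratum}|$ explicitly, computing the orbit size in each stratum ($48$, $24$, $12$, $8$, $6$, etc.), and expressing $N_2(k)$ and $N_2'(k)$ through the same stratification of the corresponding binary solution sets via Theorem \ref{sq1} and Proposition \ref{AAA}; then the claimed formulas fall out by linear algebra on the resulting finite system. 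The one subtle point requiring Proposition \ref{AAA} is the case $k=3m^2$: one must verify that when $x=y=z$ the number $k$ cannot simultaneously be represented with a zero coordinate in a way that would reintroduce an $N_2(k)$ term, which is exactly the parity/congruence statement already proved there.
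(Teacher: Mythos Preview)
Your approach is essentially identical to the paper's: both stratify the solution set $\{(x,y,z)\in\mathbb{Z}^3:x^2+y^2+z^2=k\}$ by orbit type under the signed permutation group, identify the generic orbit size as $48$, and absorb the degenerate strata (one coordinate zero; two coordinates of equal absolute value) into $3N_2(k)$ and $6N_2'(k)$ respectively, with the extra constants in cases 2--4 coming from the fully degenerate orbits $(\pm m,0,0)$, $(\pm m,\pm m,0)$, $(\pm m,\pm m,\pm m)$ together with the matching degenerate contributions already sitting inside $N_2$ and $N_2'$.

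One small correction in case~4: your congruence argument (``a sum of three odd equal squares is $\equiv 3\pmod 8$'') only covers odd $m$, and Proposition~\ref{AAA} is not the right tool here. The paper instead invokes Theorem~\ref{sq} directly: in $3m^2$ the prime $3\equiv 3\pmod 4$ occurs to the odd power $2\nu_3(m)+1$, so $N_2(3m^2)=0$ for every $m\in\mathbb{N}$, which is what makes the $3N_2(k)$ term disappear from the formula.
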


\begin{proof}
1. Let $k\neq\alpha m^2$ for any $m\in\mathbb{N}$, $\alpha=1,2,3$. Then $k$ has $L_3(k)$ presentations of the form (\ref{squared3}), $N_3(k)$ presentations of the form (\ref{three}), $N_2(k)$ presentations of the form (\ref{square1}), and $N_2^{\prime}(k)$ presentations of the form  (\ref{square2}).

To every presentation $(x,y,z)$ of $k$ of the form (\ref{squared3}) correspond exactly 48 different presentation of $k$ of the form (\ref{three}), namely, ordered triples $(\pm x,\pm y, \pm z)$,
$(\pm y,\pm x,\pm z)$, $(\pm x,\pm z,\pm y)$, $(\pm y,\pm z,\pm x)$, $(\pm z,\pm x,\pm y)$,
$(\pm z,\pm y,\pm x)$.

If $(x,y)$ is a presentation of $k$ of the form (\ref{square1}) then it follows from conditions of p.~1, imposed on $k$, that $x\neq y$, $x\neq 0$, $y\neq 0$. Therefore to every octuple of different
presentations $(\pm x,\pm y)$, $(\pm y,\pm x)$ of $k$ in the form (\ref{square1}) correspond exactly 24 different presentations of $k$ in the form (\ref{three}), namely, ordered triples
$(0,\pm x,\pm y)$, $(\pm x,0,\pm y)$, $(\pm x,\pm y,0)$, $(0,\pm y,\pm x)$, $(\pm y,0,\pm x)$,
$(\pm y,\pm x,0)$.

If $(x,y)$ is a presentation of $k$ in the form (\ref{square2}) then it follows from conditions in
p.~1, imposed on $k$, that $x\neq y$, $x\neq 0$, $y\neq 0$. Therefore to any quadruple of different presentations $(\pm x,\pm y)$ of $k$ in the form (\ref{square2}) correspond exactly 24 different presentations of $k$ in the form (\ref{three}), namely, ordered triples  $(\pm x,\pm y,\pm y$,
 $(\pm y,\pm x,\pm y)$, $(\pm y,\pm y,\pm x)$.

Thus $N_3(k)=48L_3(k)+3N_2(k)+6N^{\prime}_2(k)$, whence it follows the required formula.

2. Let $k=m^2$ for some $m\in\mathbb{N}$. Then, besides presentations of $k$ in the form  (\ref{three}), described in p.~1, there are also only 6 different presentations of $k$ in the form (\ref{three}), namely, ordered triples $(0,\pm m,0)$, $(\pm m,0,0)$, $(0,0,\pm m)$. Besides presentations $(x,y)$, $x\neq y$, $x\neq 0$, $y\neq 0$ of $k$ in the form (\ref{square1}) there are also 4 of different presentations of $k$ in the form (\ref{square1}), namely, ordered pairs
$(0,\pm m)$, $(\pm m,0)$. Except presentations $(x,y)$, $x\neq y$, $x\neq 0$, $y\neq 0$ of $k$ in the form (\ref{square2}) there are also 2 different presentations of $k$ in the form  (\ref{square2}), namely, ordered pairs $(\pm m,0)$. Then on the ground of p.~1
$$N_3(k)-6=48L_3(k)+3(N_2(k)-4)+6(N^{\prime}_2(k)-2),$$
whence it follows the required formula.

3. Let $k=2m^2$ for some $m\in\mathbb{N}$. Then, besides presentations of $k$ in the form (\ref{three}), described in p.~1, there are also only 12 different presentations of $k$ in the form  (\ref{three}), namely, ordered triples $(0,\pm m,\pm m)$, $(\pm m,0,\pm m)$, $(\pm m,\pm m,0)$. Except presentations  $(x,y)$, $x\neq y$, $x\neq 0$, $y\neq 0$ of $k$ in the form (\ref{square1}) there exist also 4 different presentations of $k$ in the form (\ref{square1}), namely, ordered pairs $(\pm m,\pm m)$. Besides presentations $(x,y)$ of $k$ with conditions $x\neq y$, $x\neq 0$,
$y\neq 0$ of the form (\ref{square2}) there exist yet 2 different presentations of $k$ in the form
(\ref{square2}), namely, ordered pairs $(0,\pm m)$. Then on the base of p.~1
$$N_3(k)-12=48L_3(k)+3(N_2(k)-4)+6(N^{\prime}_2(k)-2),$$
whence it follows the required formula.

4. Let $k=3m^2$ for some $m\in\mathbb{N}$. Then, besides presentation of $k$ in the form (\ref{three}), described in p.~1, there are also 8 different presentations of $k$ in the form
(\ref{three}), namely, ordered triples $(\pm m,\pm m,\pm m)$. On the ground of theorem \ref{sq}, $N_2(3m^2)=0$. Except presentations $(x,y)$ of $k$ with conditions $x\neq y$, $x\neq 0$, $y\neq 0$, in the form (\ref{square2}) there exist yet 4 different presentations in the form (\ref{square2}), namely, ordered pairs $(\pm m,\pm m)$. Then on the base of p.~1
$$N_3(k)-8=48L_3(k)+6(N^{\prime}_2(k)-4),$$
whence it follows the required formula.
\end{proof}

Let $\Gamma\subset \mathbb{R}^n$ be a lattice, i.e. additive discrete subgroup such that the quotient group $\mathbb{R}^n/\Gamma$ is compact; $N_{\Gamma}(m)$ is the number of vectors
$x\in \Gamma$ with the square of norm $m,$ i.e. such that $x\cdot x=m.$ The calculation of the number $N_{\Gamma}(m)$ is facilitated by introducing \textit{theta-series} (or
\textit{theta-function}) of the lattice $\Gamma$ \cite{CS} which is defined as
\begin{equation}
\label{theta}
\Theta_{\Gamma}(q)=\sum_{x\in \Gamma}q^{x\cdot x}=\sum_{m=0}^{\infty}N_{\Gamma}(m)q^{m}.
\end{equation}

For example, theta-series of integer lattice $\mathbb{Z}$ is equal to
\begin{equation}
\label{zeta}
\Theta_{\mathbb{Z}}(q)=\sum_{z\in \mathbb{Z}}q^{z^2}=1+2q+2q^4+2q^9+2q^{16}+ \dots,
\end{equation}
\begin{equation}
\label{zetak}
\Theta_{\sqrt{2}\mathbb{Z}}(q)=\sum_{z\in \mathbb{Z}}q^{2z^2}=1+2q^2+2q^8+2q^{18}+2q^{32}+ \dots.
\end{equation}
It is easy to see that if $\Gamma=\Gamma_1\oplus \Gamma_2$ is direct orthogonal sum of lattices, then
\begin{equation}
\label{suma}
\Theta_{\Gamma}(q)=\Theta_{\Gamma_1}(q)\cdot \Theta_{\Gamma_2}(q).
\end{equation}
In particular,
\begin{equation}
\label{zetan}
\Theta_{\mathbb{Z}^n}(q)= \Theta_{\mathbb{Z}}(q)^n,
\end{equation}
\begin{equation}
\label{zetaz}
\Theta_{\mathbb{Z}\oplus \sqrt{2}\mathbb{Z}}(q)=\Theta_{\mathbb{Z}}(q)\cdot \Theta_{\sqrt{2}\mathbb{Z}}(q).
\end{equation}
In previous notation, $N_{\mathbb{Z}^2}(k)=N_2(k),$ $N_{\mathbb{Z}^3}(k)=N_3(k),$
$N_{\mathbb{Z}\oplus \sqrt{2}\mathbb{Z}}(k)=N'_2(k).$ Formulas (\ref{zeta}) -- (\ref{zetaz}) permit to calculate these numbers. So,
$$
\Theta_{\mathbb{Z}^2}(q)= \Theta_{\mathbb{Z}}(q)^2=(1+2q+2q^4+2q^9+2q^{16}+ \dots)(1+2q+2q^4+2q^9+2q^{16}+ \dots)=
$$
$$1+4q+ 4q^2+ 4q^4+ 8q^5+ 4q^8+ 4q^9+ 8q^{10}+ 8q^{13}+ 4q^{16}+ 8q^{17}+ 4q^{18}+ 8q^{20}+ \dots,$$

$$
\Theta_{\mathbb{Z}^3}(q)= \Theta_{\mathbb{Z}}(q)^3=(1+2q+2q^4+2q^9+2q^{16}+ \dots)\times
$$
$$(1+4q+ 4q^2+ 4q^4+ 8q^5+ 4q^8+ 4q^9+ 8q^{10}+ 8q^{13}+ 4q^{16}+ 8q^{17}+ 4q^{18}+ 8q^{20}+ \dots)=$$
$$1+6q+12q^2+8q^3+6q^4+24q^5+24q^6+12q^8+30q^9+24q^{10}+24q^{11}+8q^{12}+$$
$$24q^{13}+48q^{14}+6q^{16}+48q^{17}+36q^{18}+24q^{19}+
24q^{20}+48q^{21}+24q^{22}+24q^{24}+\dots,$$

$$\Theta_{\mathbb{Z}\oplus \sqrt{2}\mathbb{Z}}(q)=(1+2q+2q^4+2q^9+2q^{16}+ \dots)(1+2q^2+2q^8+2q^{18}+ \dots)=$$
$$1+2q+2q^2+4q^3+2q^4+4q^6+2q^8+6q^9+4q^{11}+4q^{12}+2q^{16}+4q^{17}+6q^{18}+4q^{19}+4q^{22}+4q^{24}+\dots.$$

Calculations, using equalities, obtained here, and theorems \ref{sq1}, \ref{3sq} give

\begin{proposition}
\label{LL}
1) $L_2(5)=L_2(10)=L_2(13)=L_2(17)=L_2(20)=1;$ $L_2(k)=0$ for all the rest natural numbers
$k\leq 24.$

2) $L_3(14)=L_3(21)=1;$ $L_3(k)=0$ for all the rest natural numbers $k\leq 24.$
\end{proposition}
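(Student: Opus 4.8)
The plan is to extract, for every $k\le 24$, the three representation counts $N_2(k)=N_{\mathbb{Z}^2}(k)$, $N_3(k)=N_{\mathbb{Z}^3}(k)$ and $N_2'(k)=N_{\mathbb{Z}\oplus\sqrt{2}\mathbb{Z}}(k)$ simply as the coefficients of $q^k$ in the three theta-series $\Theta_{\mathbb{Z}^2}(q)$, $\Theta_{\mathbb{Z}^3}(q)$ and $\Theta_{\mathbb{Z}\oplus\sqrt{2}\mathbb{Z}}(q)$ computed just above (with the convention that an omitted coefficient is $0$ — for $N_2$ this is also confirmed by the criterion of Theorem~\ref{sq}, and for $N_3$ by the rule $N_3(k)=0$ whenever $k\equiv 7\pmod 8$), and then to feed these numbers into the closed formulas of Theorems~\ref{sq1} and~\ref{3sq}.

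For part~1 I would first list those $k\le 24$ of the special shape $m^2$ or $2m^2$, namely $k\in\{1,2,4,8,9,16,18\}$; for these Theorem~\ref{sq1}(2) gives $L_2(k)=(N_2(k)-4)/8$, while for all remaining $k\le 24$ Theorem~\ref{sq1}(1) gives $L_2(k)=N_2(k)/8$. Reading off the coefficients shows $N_2(k)=8$ exactly for $k\in\{5,10,13,17,20\}$ (none of which is a square or twice a square), hence $L_2(k)=1$ there; every other $k\le 24$ either has $N_2(k)=0$, or is of the special shape with $N_2(k)=4$, and in both cases $L_2(k)=0$. This is precisely the asserted list.

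For part~2 I would partition $\{1,\dots,24\}$ according to which of the shapes $m^2$, $2m^2$, $3m^2$ (if any) the number $k$ has, so as to invoke the correct one of the four branches of Theorem~\ref{3sq}, and then substitute $N_3(k)$, $N_2(k)$, $N_2'(k)$. The generic branch gives, for the interesting values, $L_3(14)=(48-3\cdot 0-6\cdot 0)/48=1$ and $L_3(21)=(48-0-0)/48=1$; for the special-shape numbers the built-in cancellations force $0$ (for instance $k=9=3^2$ yields $L_3(9)=(30-3\cdot 4-6\cdot 6+18)/48=0$), and for $k\equiv 7\pmod 8$ one has $N_3(k)=0$ hence $L_3(k)=0$. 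Collecting the values gives $L_3(k)=1$ only for $k=14,21$ and $0$ otherwise.

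The argument is entirely computational, so the only real care needed is organizational: correctly identifying which $k\le 24$ carry the shapes $m^2,2m^2,3m^2$ (to select the right branch of Theorems~\ref{sq1} and~\ref{3sq}) and making sure the theta-series are known through order $q^{24}$. As an independent sanity check, both lists can be reproduced directly — by enumerating $1\le x<y$ with $x^2+y^2\le 24$ for part~1, and $1\le x<y<z$ with $x^2+y^2+z^2\le 24$ for part~2 — and the two methods agree, so no genuine obstacle is expected.
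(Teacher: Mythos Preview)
Your proposal is correct and follows exactly the route the paper itself takes: the paper's proof is the single sentence ``Calculations, using equalities, obtained here, and theorems~\ref{sq1}, \ref{3sq} give,'' i.e.\ read off $N_2(k),N_3(k),N_2'(k)$ from the three theta-series expansions displayed just before the proposition and plug them into the branch formulas of Theorems~\ref{sq1} and~\ref{3sq}. Your write-up simply spells out this computation in more detail (and adds a harmless direct-enumeration sanity check), so nothing further is needed.
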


\section{Conclusion}

\begin{theorem}
\label{qq}
Let $G=\SU(4)$ is supplied by biinvariant Riemannian metric $\nu$ such that $\nu(e)=-k_{ad}$. A number $\lambda < 0$ is eigenvalue of Laplacian on $(G,\nu)$ in one and only one of the following cases (I or II)

I. 1) $-32\lambda\in\mathbb{N}$, $-32\lambda\equiv 7(\Mod 8)$;

2) natural number $k=20-32\lambda$ is a sum of squares of three mutually different natural numbers.

In addition the number of highest weights $\Lambda$, such that $\lambda(\Lambda)=\lambda$, is equal to $2L_3(20-32\lambda)$.

II. 1) $-8\lambda\in\mathbb{N}$;

2) natural number $k=5-8\lambda$ is a sum of squares of three mutually different natural numbers.

In addition the number of highest weights $\Lambda$, such that $\lambda(\Lambda)=\lambda$, is equal to $2L_3(5-8\lambda)+L_2(5-8\lambda)$.
\end{theorem}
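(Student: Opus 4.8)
The plan is to turn the assertion into a counting problem for the ternary quadratic form occurring in formula (\ref{a1}) and then to invoke the number-theoretic results recalled above. Since $\nu(e)=-k_{ad}$ we are in the setting of Theorem~\ref{alg} with $\gamma=1$, and by the analysis of $\SU(4)$ in Section~\ref{Sec:A3} (formulas~(\ref{a1}) and~(\ref{a2}) and the description of $\Lambda^{+}(\SU(4))$) the eigenvalues of the Laplacian on $(\SU(4),\nu)$ are exactly the numbers $\lambda(\Lambda)=-\frac{1}{32}\bigl(Q(\nu)-20\bigr)$, where $Q(\nu_1,\nu_2,\nu_3)=(\nu_1+2\nu_2+\nu_3)^2+2\nu_1^2+2\nu_3^2$ and $\nu=(\nu_1,\nu_2,\nu_3)$ runs over $\mathbb{N}^3$ via $\nu_i=\Lambda_i+1$; moreover $\lambda(\Lambda)<0$ iff $Q(\nu)>20$, and the number of highest weights $\Lambda$ with $\lambda(\Lambda)=\lambda$ equals $\#\{\nu\in\mathbb{N}^3 : Q(\nu)=20-32\lambda\}$. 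So the whole theorem reduces to deciding, for $k=20-32\lambda$, when $Q(\nu)=k$ is solvable with $\nu_i\geq 1$ and counting the solutions.

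First I would diagonalize $Q$: putting $u=\nu_1+\nu_3$, $v=\nu_1-\nu_3$, $w=\nu_1+2\nu_2+\nu_3=u+2\nu_2$ one gets $2\nu_1^2+2\nu_3^2=u^2+v^2$, hence $Q(\nu)=w^2+u^2+v^2$, and $\nu\mapsto(w,u,v)$ is a bijection of $\mathbb{N}^3$ onto the integer triples with $u\equiv v\equiv w\pmod 2$, $u\geq|v|+2$, $w\geq u+2$ (in particular $w>u>|v|\geq 0$; the inverse is $\nu_1=(u+v)/2$, $\nu_3=(u-v)/2$, $\nu_2=(w-u)/2$). A short computation modulo $8$ (odd squares are $\equiv 1$, even squares $\equiv 0$ or $4$) then shows that $Q(\nu)\equiv 3\pmod 8$ exactly when $u,v,w$ are all odd, i.e. exactly when one of $\nu_1,\nu_3$ is odd and the other even, whereas $Q(\nu)\equiv 0$ or $4\pmod 8$ exactly when $u,v,w$ are all even, i.e. $\nu_1\equiv\nu_3\pmod 2$. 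Since $20\equiv 4\pmod 8$, this means that an eigenvalue $\lambda<0$ satisfies either $-32\lambda\equiv 7\pmod 8$ (the all-odd case, which will be case~I) or $-32\lambda\equiv 0\pmod 4$, equivalently $-8\lambda\in\mathbb{N}$ (the all-even case, which will be case~II); the two alternatives are mutually exclusive and, together with $\lambda<0$, cover all eigenvalues. This settles conditions~I.1) and~II.1) and the ``one and only one'' clause.

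Next, in case~I one has $k\equiv 3\pmod 8$ and $v\neq 0$: a solution $\nu$ gives a representation $k=w^2+u^2+v^2$ with $w>u>|v|>0$ all odd, and conversely. By Proposition~\ref{AAA}(2) every representation $k=a^2+b^2+c^2$ has $a,b,c$ odd, and since $k\equiv 3\pmod 8$ is not a sum of two squares all such representations already have positive entries; those with a repeated entry cannot arise from a triple $w>u>|v|$. Passing to the decreasing triple $a>b>c\geq 1$ and keeping track of the sign of $v$ should give $\#\{\nu:Q(\nu)=k\}=2L_3(k)$, which is nonzero precisely when $k$ is a sum of three mutually distinct positive squares --- statement~I. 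In case~II, $u,v,w$ are even; writing $w=2w'$, $u=2u'$, $v=2v'$ gives $Q(\nu)=4(w'^2+u'^2+v'^2)$ and $k=4k'$ with $k'=5-8\lambda\in\mathbb{N}$, the constraints becoming $w'>u'>|v'|\geq 0$. Splitting the solutions according to whether $v'=0$: those with $v'\neq 0$ match (via the decreasing triple and the two signs of $v'$) the representations of $k'$ as three distinct positive squares, contributing $2L_3(k')$, while those with $v'=0$ match the representations $k'=w'^2+u'^2$ with $w'>u'\geq 1$, contributing $L_2(k')$. Hence $\#\{\nu:Q(\nu)=k\}=2L_3(k')+L_2(k')$ --- statement~II, where ``a sum of squares of three mutually different natural numbers'' is to be read so as to allow one of them to be $0$, i.e. as ``$L_3(k')>0$ or $L_2(k')>0$''.

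The algebra of the substitution and the residue bookkeeping are routine; the real work is in the final step, namely matching $\#\{\nu\in\mathbb{N}^3:Q(\nu)=k\}$ to the arithmetic quantities $L_2$, $L_3$ of Theorems~\ref{sq1} and~\ref{3sq} with exactly the right multiplicities. The main obstacle will be the boundary phenomena: a vanishing coordinate $v'$, which is what makes a \emph{two}-square condition enter case~II and produces the $L_2(k')$ term, and representations with repeated summands, which must be excluded and are precisely the ones filtered out by the strict inequalities $w>u>|v|$ forced by $\nu_i\geq 1$; one must also verify, using Proposition~\ref{AAA}, that in case~I no such boundary representations occur at all, so that only $L_3(k)$ survives and the two-square term is absent.
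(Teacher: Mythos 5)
Your proposal is correct and follows essentially the same route as the paper's proof: the same change of variables ($x=\nu_1-\nu_3$, $y=\nu_1+\nu_3$, $z=\nu_1+2\nu_2+\nu_3$, halved in the all-even case), the same parity split modulo $8$ via Proposition \ref{AAA}, and the same identification of the solution counts with $2L_3(20-32\lambda)$ and $2L_3(5-8\lambda)+L_2(5-8\lambda)$; keeping the sign of $v$ and exhibiting an explicit bijection instead of assuming $\nu_1>\nu_3$ and doubling is only a bookkeeping variant. Your remark that condition II.2) must be read so as to allow a zero summand (this is exactly what the $L_2$ term records, e.g. $\lambda=-1$ with $5-8\lambda=13=0^2+2^2+3^2$) is a fair gloss on the statement rather than a gap in the argument.
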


\begin{proof}
Following to (\ref{xi1}), let us consider Diophantine equation
\begin{equation}
\label{s1}
(\nu_1+2\nu_2+\nu_3)^2+(\nu_1-\nu_3)^2+(\nu_1+\nu_3)^2=20-32\lambda,\quad\nu_1,\,\,\nu_2,\,\,\nu_3\in\mathbb{N}.
\end{equation}
It is clear that if the equation (\ref{s1}) is solvable, then $-32\lambda\in\mathbb{N}$.

Assume at first that the equation (\ref{s1}) has a solution $(\nu_1,\,\nu_2,\,\nu_3)$ such that
$\nu_1+\nu_3$ is an odd number. Since $\nu_1$ and $\nu_3$ occur symmetrically in (\ref{s1}), then
we can suppose for definiteness that $\nu_1>\nu_3$. Let us introduce the following notation
$$x=\nu_1-\nu_3,\quad y=\nu_1+\nu_3,\quad z=\nu_1+2\nu_2+\nu_3.$$
Then the equation (\ref{s1}) will written down in the form
\begin{equation}
\label{as2}
x^2+y^2+z^2=20-32\lambda,
\end{equation}
furthermore
\begin{equation}
\label{cond}
x,\,y,\,z\in\mathbb{N},\,\,x\equiv 1(\Mod 2),\,y\equiv 1(\Mod 2),\,z\equiv 1(\Mod 2),\quad x<y<z.
\end{equation}
It is easy to see that the number of solutions $(\nu_1,\,\nu_2,\,\nu_3)\in\mathbb{N}^3$ to equation (\ref{s1}), satisfying condition $\nu_1+\nu_3\equiv 1(\Mod 2)$, is equal to doubled number of solutions to equation (\ref{as2}) with condition (\ref{cond}).
Therefore, on the ground of proposition \ref{AAA}, if the equation (\ref{as2}) is solvable under
condition (\ref{cond}), then $20-32\lambda\equiv 3(\Mod 8)$ what is equivalent to relation  $-32\lambda\equiv 7(\Mod 8)$.

Thus, if the equation (\ref{s1}) is solvable and at least one its solution satisfies condition
$\nu_1+\nu_3\equiv 1(\Mod 2)$, then $-32\lambda\equiv 7(\Mod 8)$ and the number of solutions, satisfying next to the last condition, is equal to $2L_3(20-32\lambda)$.

Let us suppose now that the equation (\ref{s1}) has a solution $(\nu_1,\,\nu_2,\,\nu_3)$, where
$\nu_1\neq\nu_3$ are natural numbers of the same evenness. Then $-8\lambda\in\mathbb{Z}_{+}$. Let us set for definiteness that $\nu_1>\nu_3$ and introduce the following notation
\begin{equation}
\label{not}
x=\frac{\nu_1-\nu_3}{2},\quad y=\frac{\nu_1+\nu_3}{2},\quad z=\frac{\nu_1+\nu_3}{2}+\nu_2.
\end{equation}
Then the equation (\ref{s1}) will written down in the form
\begin{equation}
\label{s3}
x^2+y^2+z^2=5-8\lambda,
\end{equation}
in addition
\begin{equation}
\label{cond1}
x,\,y,\,z\in\mathbb{N},\,\, x<y<z.
\end{equation}
Therefore the number of solutions $(\nu_1,\,\nu_2,\,\nu_3)\in\mathbb{N}^3$ of equation (\ref{s1}), satisfying the condition that the numbers $\nu_1\neq\nu_3$ have the same evenness, is equal to  $2L_3(5-8\lambda)$.

Let suppose at the end that the equation (\ref{s1}) has a solution $(\nu_1,\,\nu_2,\,\nu_3)$, where $\nu_1=\nu_3$. After the change of variables $x=\nu_1$, $y=\nu_1+\nu_2$ the equation (\ref{s1}) will written down in the form
\begin{equation}
\label{s4}
x^2+y^2=5-8\lambda,
\end{equation}
in addition $x,\,y\in\mathbb{N},\,\, x<y.$ Therefore the number of solutions
$(\nu_1,\,\nu_2,\,\nu_3)\in\mathbb{N}^3$ to equation (\ref{s1}), satisfying condition
$\nu_1=\nu_3$, is equal to $L_2(5-8\lambda)$.
\end{proof}

\begin{remark}
On the ground of theorem \ref{sq} and corollary \ref{cor}, $L_2(k)=0$ for $k\in N$ if and only if it is satisfied one of the following conditions

a) the number $k$ has at least one prime divisor $p$ with condition $p\equiv 3(\Mod 4)$, appearing in decomposition of $k$ in an odd power;

b) $k=m^2$ or $k=2m^2$ for some $m\in\mathbb{N}$ and the difference of quantities of (natural) divisors $d$ of $k$ such that $d\equiv 1(\Mod 4)$ and divisors $d$ of $k$ such that
$d\equiv 3(\Mod 4),$ is equal to 1.
\end{remark}

\begin{example}
1) If $-32\lambda=15,$ then $-32\lambda\equiv 7(\Mod 8),$ $k=20-32\lambda=35=1^2+3^2+5^2.$ Moreover $\lambda=-15/32$ is the least by modulus non-zero eigenvalue of Laplacian, considered earlier, and the number of highest weights $\Lambda,$ such that $\lambda(\Lambda)=-15/32,$ is equal to $2L_3(35)=2.$

2) If $-8\lambda=9,$ then $k=5-8\lambda=14=1^2+2^2+3^2.$ Moreover $\lambda=-9/8,$
$\Lambda=2\overline{\omega}_1$ and the number of highest weights $\Lambda',$ such that
$\lambda(\Lambda')=-9/8,$ is equal to $2L_3(14)+L_2(14)=2.$

3) If $-8\lambda=16,$ then $5-8\lambda=21=1^2+2^2+4^2.$ In addition $\lambda=-2,$
$\Lambda=2\overline{\omega}_1+\overline{\omega}_2$ and the number of highest weights $\Lambda',$ such that $\lambda(\Lambda')=-2,$ is equal to $2L_3(21)+ L_2(21)=2.$
\end{example}

\begin{theorem}
\label{oo}
Suppose that $G=\SU(4)/(\pm E_4)$, where $E_4$ is unit matrix of the fourth order, is supplied with biinvariant Riemannian metric $\nu$ such that $\nu(e)=-k_{ad}$. A number $\lambda < 0$ is an eigenvalue of Laplacian on $(G,\nu)$ if and only if the following conditions are satisfied

1) $-8\lambda\in\mathbb{N}$;

2) natural number $k=5-8\lambda$ is a sum of squares of three mutually different natural numbers.

In addition the number of highest weights $\Lambda$, such that $\lambda(\Lambda)=\lambda$, is equal to $2L_3(5-8\lambda)+L_2(5-8\lambda)$.
\end{theorem}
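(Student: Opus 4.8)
The plan is to obtain Theorem~\ref{oo} as the restriction of case~II of Theorem~\ref{qq} to the quotient group, since $\Lambda^{+}(\SU(4)/(\pm E_4))$ is exactly the subset of $\Lambda^{+}(\SU(4))$ picked out by a congruence. Indeed, by p.~6b) the highest weights of $\SU(4)/(\pm E_4)$ are those $\Lambda=\sum_{i=1}^{3}\Lambda_i\bar{\omega}_i$ with $\Lambda_i\in\mathbb{Z}_{\geq 0}$ for which the numbers $\nu_i=\Lambda_i+1$ satisfy $\nu_3\equiv\nu_1(\Mod 2)$, and for $\nu(e)=-k_{ad}$ (so $\gamma=1$) a negative number $\lambda$ is an eigenvalue of $(G,\nu)$ precisely when the set $\Xi_2$ of (\ref{xi2}) is nonempty, the number of highest weights $\Lambda$ with $\lambda(\Lambda)=\lambda$ being $|\Xi_2|$. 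So the whole argument amounts to analysing $\Xi_2$, which is the set $\Xi_1$ of (\ref{xi1}) cut down by the condition $\nu_1\equiv\nu_3(\Mod 2)$.

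First I would rewrite the defining relation exactly as in the proof of Theorem~\ref{qq}, using $2\nu_1^2+2\nu_3^2=(\nu_1-\nu_3)^2+(\nu_1+\nu_3)^2$: a triple $(\nu_1,\nu_2,\nu_3)\in\mathbb{N}^3$ lies in $\Xi_2$ iff
$$(\nu_1+2\nu_2+\nu_3)^2+(\nu_1-\nu_3)^2+(\nu_1+\nu_3)^2=20-32\lambda,\qquad \nu_1\equiv\nu_3(\Mod 2).$$
Since $\nu_1-\nu_3$ and $\nu_1+\nu_3$ always have the same parity, the extra condition $\nu_1\equiv\nu_3(\Mod 2)$ is equivalent to discarding the solutions with $\nu_1+\nu_3$ odd. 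These are precisely the ones that in the proof of Theorem~\ref{qq} produced case~I; the surviving solutions are exactly those treated in the last two paragraphs of that proof, namely those with $\nu_1\neq\nu_3$ of equal parity, and those with $\nu_1=\nu_3$. As there, a solution of the first kind yields, via the substitution (\ref{not}), an integer solution of $x^2+y^2+z^2=5-8\lambda$; hence $\Xi_2\neq\emptyset$ forces $-8\lambda\in\mathbb{N}$, which is condition~1), and conversely a representation $5-8\lambda=x^2+y^2+z^2$ with $x<y<z$ natural, on reversing (\ref{not}), yields a solution in $\Xi_2$, matching condition~2).

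For the multiplicity I would simply re-run the two bijections already established in the proof of Theorem~\ref{qq}. Solutions of $\Xi_2$ with $\nu_1\neq\nu_3$ correspond two-to-one to ordered triples $x<y<z$ in $\mathbb{N}$ with $x^2+y^2+z^2=5-8\lambda$ --- the factor $2$ being the choice $\nu_1>\nu_3$ versus $\nu_1<\nu_3$ --- hence contribute $2L_3(5-8\lambda)$; solutions with $\nu_1=\nu_3$ correspond bijectively, via $x=\nu_1$, $y=\nu_1+\nu_2$, to ordered pairs $x<y$ in $\mathbb{N}$ with $x^2+y^2=5-8\lambda$, hence contribute $L_2(5-8\lambda)$. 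Summing gives the asserted total $2L_3(5-8\lambda)+L_2(5-8\lambda)$.

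The only delicate point --- and essentially the only thing that needs to be written carefully --- is the parity bookkeeping: that the constraint $\nu_1\equiv\nu_3(\Mod 2)$ really excludes all and only the case~I solutions; that the maps $(\nu_1,\nu_2,\nu_3)\mapsto(x,y,z)$ and $(\nu_1,\nu_2,\nu_3)\mapsto(x,y)$ are well defined and bijective onto the stated index sets, with the strict inequalities $x<y<z$ (respectively $x<y$) automatic from $\nu_i\in\mathbb{N}$; and that the two families of surviving solutions are disjoint and exhaust $\Xi_2$. All of this repeats verbatim the computation already carried out for Theorem~\ref{qq}, so no new idea is required; in fact one may phrase the conclusion as saying that the highest weights $\Lambda$ of $\SU(4)/(\pm E_4)$ with $\lambda(\Lambda)<0$ coincide, with the same eigenvalue assignment, with the case~II highest weights of $\SU(4)$.
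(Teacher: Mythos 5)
Your proposal is correct and follows exactly the paper's route: the paper's proof of Theorem~\ref{oo} is literally ``it follows from (\ref{xi2}) and the proof of theorem \ref{qq}'', i.e.\ the parity constraint $\nu_1\equiv\nu_3(\Mod 2)$ in $\Xi_2$ discards precisely the case~I solutions of $\Xi_1$, and the remaining two families (same parity $\nu_1\neq\nu_3$, and $\nu_1=\nu_3$) are counted by the same substitutions, giving $2L_3(5-8\lambda)+L_2(5-8\lambda)$. You have simply written out in detail the bookkeeping the paper leaves implicit.
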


\begin{proof}
It follows from (\ref{xi2}) and proof of theorem \ref{qq}.
\end{proof}

\begin{theorem}
\label{qq3}
Assume that $G=\SU(4)/C(\SU(4))$ is supplied by biinvariant Riemannian metric $\nu$ such that
$\nu(e)=-k_{ad}$. A number $\lambda < 0$ is an eigenvalue of Laplacian on $(G,\nu)$ if and only if
the following conditions are satisfied

1) $-8\lambda\in\mathbb{N}$, in addition either $-4\lambda\equiv 3(\Mod 4)$, or $-4\lambda\equiv 2(\Mod 4)$, or $-\lambda\in\mathbb{N}$;

2) natural number $k=5-8\lambda$ is a sum of squares of three mutually different numbers.

Moreover the number of highest weights $\Lambda$, such that $\lambda(\Lambda)=\lambda$, is equal to $2L_3(5-8\lambda)+L_2(5-8\lambda)$.
\end{theorem}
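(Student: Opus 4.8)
The plan is to repeat, almost verbatim, the three--case analysis from the proof of Theorem~\ref{qq}, but now dragging along the extra congruence $\nu_3-\nu_1+2\nu_2\equiv 2\,(\Mod 4)$ appearing in~(\ref{xi3}), which is exactly the datum that separates the characteristic lattice $\Lambda_0(A_3)$ from $\Lambda_1(A_3)$. Since $\nu(e)=-k_{ad}$ we take $\gamma=1$, so by~(\ref{xi3}) a number $\lambda<0$ is an eigenvalue precisely when the system $(\nu_1+2\nu_2+\nu_3)^2+2\nu_1^2+2\nu_3^2=20-32\lambda$, $\nu_3-\nu_1+2\nu_2\equiv 2\,(\Mod 4)$, $(\nu_1,\nu_2,\nu_3)\in\mathbb{N}^3$ is solvable, and the number of corresponding highest weights equals the number of such solutions (this bijection is the one set up in section~\ref{Sec:A3}). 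Using $2\nu_1^2+2\nu_3^2=(\nu_1-\nu_3)^2+(\nu_1+\nu_3)^2$, the first equation is identically~(\ref{s1}); so I would split the solution set by the parity of $\nu_1+\nu_3$ and by whether $\nu_1=\nu_3$, exactly as there, and then decide in each case which solutions also obey the congruence.

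First I would dispose of the case $\nu_1+\nu_3$ odd (``part I'' of Theorem~\ref{qq}): then $\nu_3-\nu_1$ is odd, hence $\nu_3-\nu_1+2\nu_2$ is odd and can never be $\equiv 2\,(\Mod 4)$, so this case produces no highest weights at all --- which is why the present statement keeps only the ``part~II''--type alternative. Next, for $\nu_1\neq\nu_3$ of the same parity I would apply substitution~(\ref{not}), turning the equation into $x^2+y^2+z^2=5-8\lambda$ with $x<y<z$ natural; here $\nu_3-\nu_1+2\nu_2=2(z-x-y)$, so the congruence amounts to $x+y+z$ being odd, and since $x+y+z\equiv x^2+y^2+z^2=5-8\lambda\,(\Mod 2)$ this happens exactly when $-4\lambda\in\mathbb{Z}$. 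Thus if $-4\lambda\in\mathbb{N}$ all $2L_3(5-8\lambda)$ solutions found in Theorem~\ref{qq} survive, and if $-4\lambda\notin\mathbb{Z}$ none do. Finally, for $\nu_1=\nu_3$ I would put $x=\nu_1$, $y=\nu_1+\nu_2$, getting $x^2+y^2=5-8\lambda$ with $x<y$ natural; now $\nu_3-\nu_1+2\nu_2=2(y-x)$, so the congruence means $y-x$ odd, i.e. $5-8\lambda$ odd, i.e. again $-4\lambda\in\mathbb{Z}$. Hence when $-4\lambda\in\mathbb{N}$ all $L_2(5-8\lambda)$ solutions survive, otherwise none.

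Combining the three cases, the system is solvable iff $-4\lambda\in\mathbb{N}$ and $5-8\lambda$ is a sum of three pairwise distinct squares of non-negative integers, the possibility of a zero entry being precisely the $L_2$--term --- this is condition~2); and then the number of highest weights is $2L_3(5-8\lambda)+L_2(5-8\lambda)$, the asserted count. It remains to recast ``$-4\lambda\in\mathbb{N}$'' as condition~1). For this I would invoke the three-squares theorem recalled earlier: condition~2) forces $5-8\lambda\not\equiv 7\,(\Mod 8)$, equivalently $-8\lambda\not\equiv 2\,(\Mod 8)$, equivalently $-4\lambda\not\equiv 1\,(\Mod 4)$; together with $-4\lambda\in\mathbb{N}$ this leaves exactly $-4\lambda\equiv 0,2,3\,(\Mod 4)$, that is, ``$-\lambda\in\mathbb{N}$, or $-4\lambda\equiv 2\,(\Mod 4)$, or $-4\lambda\equiv 3\,(\Mod 4)$'' --- condition~1). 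Conversely, condition~1) yields $-4\lambda\in\mathbb{N}$, which with condition~2) feeds back into Case~B or Case~C to produce a genuine solution.

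The work here is essentially bookkeeping rather than new ideas; the one point that needs care is that the extra congruence is constant on each fibre of the maps $(\nu_1,\nu_2,\nu_3)\mapsto(x,y,z)$ of Cases~B and~C --- it depends only on the parity of $5-8\lambda$ --- so it is an all-or-nothing restriction, and that the clause ``$-4\lambda\not\equiv 1\,(\Mod 4)$'' implicit in condition~1) is a \emph{consequence} of condition~2) rather than an independent hypothesis, so that the stated equivalence is sharp and the count $2L_3(5-8\lambda)+L_2(5-8\lambda)$ is asserted only for actual eigenvalues.
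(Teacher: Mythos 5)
Your proposal is correct and follows essentially the same route as the paper's own proof: the same split into the cases $\nu_1\neq\nu_3$ (substitution (\ref{not}), giving $x^2+y^2+z^2=5-8\lambda$ with $x+y+z$ odd and count $2L_3$) and $\nu_1=\nu_3$ (giving $x^2+y^2=5-8\lambda$ with $x+y$ odd and count $L_2$), with the congruence from (\ref{xi3}) acting as an all-or-nothing parity condition equivalent to $-4\lambda\in\mathbb{N}$. Your only deviations are cosmetic and in fact slightly more careful than the paper: you explicitly note that the congruence excludes the case $\nu_1+\nu_3$ odd (which the paper leaves implicit when applying (\ref{not})), and you recover condition 1) via the three-squares exclusion of $5-8\lambda\equiv 7\,(\Mod 8)$ rather than the paper's direct enumeration of the residues $1,3,5\,(\Mod 8)$.
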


\begin{proof}
Following to (\ref{xi3}), consider Diophantine equation
\begin{equation}
\label{st}
(\nu_1+2\nu_2+\nu_3)^2+(\nu_1-\nu_3)^2+(\nu_1+\nu_3)^2=20-32\lambda,\quad\nu_1,\,\nu_2,\,\nu_3\in\mathbb{N},\,\,
\nu_3-\nu_1+2\nu_2\equiv 2(\Mod 4).
\end{equation}

Assume at first that the equation (\ref{st}) has a solution $(\nu_1,\,\nu_2,\,\nu_3)$, where
$\nu_1\neq\nu_3$. Since $\nu_1$ and $\nu_3$ occur symmetrically in (\ref{st}), then we can suppose without loss of generality that $\nu_1>\nu_3$. It is easy to see that after the change of variables (\ref{not}) (\ref{st}) will written down in the form (\ref{s3}), moreover
 \begin{equation}
\label{st2}
x,\,y,\,z\in\mathbb{N},\,\, x<y<z,\,\,x+y+z\equiv 1(\Mod 2).
\end{equation}
Condition $x+y+z\equiv 1(\Mod 2)$ is equivalent to the fact that either all numbers $x$, $y$, $z$ are odd or there is only one odd number among them.
It follows from the proof of proposition \ref{AAA} that if the equation (\ref{s3}) with condition (\ref{st2}) is solvable, then natural number $5-8\lambda$ can give only residue $1$, $3$ or $5$
under division by $8$, what is equivalent to $-4\lambda\equiv 2(\Mod 4)$,
$-4\lambda\equiv 3(\Mod 4)$ or $-\lambda\in\mathbb{N}$ respectively. It is easy to see that in this case the oddness condition for the quantity of odd numbers among $x$, $y$, $z$ in decomposition (\ref{s3}) is satisfied automatically, and the number of solutions to equation (\ref{s3}) with
condition (\ref{st2}) is equal to $L_3(5-8\lambda)$.

Suppose now that the equation (\ref{st}) has a solution $(\nu_1,\,\nu_2,\,\nu_3)$, where
$\nu_1=\nu_3$. After the change of variables $x=\nu_1$, $y=\nu_1+\nu_2$ the equation (\ref{st}) will written down in the form (\ref{s4}), moreover
 \begin{equation}
\label{sond}
x,\,y\in\mathbb{N},\,\,x<y,\,\,x+y\equiv 1(\Mod 2).
\end{equation}
It is easy to see that condition $x+y\equiv 1(\Mod 2)$ is equivalent to relation
$-4\lambda\in\mathbb{N}$; in addition the number of solutions to equation (\ref{s4}) with condition (\ref{sond}) is equal to $L_2(5-8\lambda).$
\end{proof}

\begin{theorem}
\label{spin7}
Let $G=\Spin(7)$ is supplied by biinvariant Riemannian metric $\nu$ such that $\nu(e)=-k_{ad}$. A number $\lambda < 0$ is an eigenvalue of Laplacian on $(G,\nu)$ in one and one of the following cases (I or II)

I. 1) $-5\lambda\in\mathbb{N}$;

2) natural number $k=35-40\lambda$ is a sum of squares of three mutually different natural numbers.

In addition the number of highest weights $\Lambda$, such that $\lambda(\Lambda)=\lambda$, is equal to $L_3(35-40\lambda)$.

II. 1) $-40\lambda\in\mathbb{N}$, moreover $-40\lambda\equiv 1(\Mod 4)$;

2) natural number $k=\frac{35-40\lambda}{4}$ is a sum of squares of three mutually different natural numbers.

Moreover the number of highest weights $\Lambda$, such that $\lambda(\Lambda)=\lambda$, is equal to  $L_3\left(\frac{35-40\lambda}{4}\right)$.
\end{theorem}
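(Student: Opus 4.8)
The plan is to mirror the proof of Theorem~\ref{qq}, working with the set $\Xi_4$ of (\ref{xi4}), which for $\gamma=1$ consists of the triples $(\nu_1,\nu_2,\nu_3)\in\mathbb{N}^3$ with $(2\nu_1+2\nu_2+\nu_3)^2+(2\nu_2+\nu_3)^2+\nu_3^2=35-40\lambda$. First I would note that, since $\lambda<0$, solvability of this equation forces $35-40\lambda$ to be a natural number, hence $-40\lambda\in\mathbb{N}$. The decisive step is the change of variables $x=\nu_3$, $y=2\nu_2+\nu_3$, $z=2\nu_1+2\nu_2+\nu_3$: since $y-x=2\nu_2$ and $z-y=2\nu_1$ are positive even integers, this is a bijection between the solutions $(\nu_1,\nu_2,\nu_3)\in\mathbb{N}^3$ of the equation and the triples $(x,y,z)$ with $x<y<z$, $x\equiv y\equiv z\pmod 2$, and $x^2+y^2+z^2=35-40\lambda$. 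As every admissible $(\nu_1,\nu_2,\nu_3)$ determines exactly one highest weight $\Lambda=\sum_{i=1}^{3}(\nu_i-1)\bar{\omega}_i$, counting highest weights reduces to counting such triples $(x,y,z)$.

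Next I would split according to the common parity of $x,y,z$. If all three are odd, Proposition~\ref{AAA} gives $x^2+y^2+z^2\equiv 3\pmod 8$, i.e. $35-40\lambda\equiv 3\pmod 8$, which is equivalent to $-5\lambda\in\mathbb{N}$; conversely, when $35-40\lambda\equiv 3\pmod 8$ the converse half of Proposition~\ref{AAA} shows that \emph{every} representation of $35-40\lambda$ as a sum of three squares has all summands odd, so the parity condition $x\equiv y\equiv z\pmod 2$ becomes vacuous and the number of triples is exactly $L_3(35-40\lambda)$; this is case~I. If instead $x,y,z$ are all even, then $x=2x'$, $y=2y'$, $z=2z'$ transforms the equation into $x'^2+y'^2+z'^2=(35-40\lambda)/4$ with $x'<y'<z'$ natural, which requires $35-40\lambda\equiv 0\pmod 4$, equivalently $-40\lambda\in\mathbb{N}$ with $-40\lambda\equiv 1\pmod 4$, and the number of triples is $L_3\bigl((35-40\lambda)/4\bigr)$; this is case~II. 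Finally, since case~I forces $-40\lambda\equiv 0\pmod 8$ while case~II forces $-40\lambda$ odd, the two cases are mutually exclusive; and since in case~I an all-even triple would give $35-40\lambda\equiv 0\pmod 4$ (impossible) while in case~II an all-odd triple would give $35-40\lambda\equiv 3\pmod 4$ (impossible), every eigenvalue $\lambda<0$ falls under exactly one of the two cases, with exactly the stated number of highest weights.

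The passage from $\Xi_4$ to the Diophantine equation and all the modular bookkeeping are routine. The step I expect to need the most care, and would write out explicitly, is the twofold use of Proposition~\ref{AAA} in case~I: not only that all-odd summands force $35-40\lambda\equiv 3\pmod 8$, but the converse implication, that this congruence forces every three-square representation of $35-40\lambda$ to have all summands odd. This is exactly what makes the parity restriction vacuous in case~I and collapses the count to the unrestricted $L_3(35-40\lambda)$ rather than a restricted sub-count, precisely as in the proof of Theorem~\ref{qq}.
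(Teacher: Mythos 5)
Your proposal is correct and follows essentially the same route as the paper: the same substitution $x=\nu_3$, $y=2\nu_2+\nu_3$, $z=2\nu_1+2\nu_2+\nu_3$, the same split according to the parity of $\nu_3$ (equivalently the common parity of $x,y,z$), with case I handled via Proposition~\ref{AAA} (the paper cites the proof of Theorem~\ref{qq}, which uses exactly that) and case II by halving to get $x^2+y^2+z^2=(35-40\lambda)/4$ counted by $L_3$. Your explicit verification that the two cases are mutually exclusive and that the parity restriction is vacuous in each case is only a more careful spelling-out of what the paper leaves implicit.
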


\begin{proof}
Following (\ref{xi4}), consider equation
\begin{equation}
\label{s2}
(2\nu_1+2\nu_2+\nu_3)^2+(2\nu_2+\nu_3)^2+\nu_3^2=35-40\lambda,\quad\nu_1,\,\,\nu_2,\,\,\nu_3\in\mathbb{N}.
\end{equation}
It is clear that if the equation (\ref{s2}) is solvable, then $-40\lambda\in\mathbb{N}$.

Assume at first that the equation (\ref{s2}) has a solution $(\nu_1,\,\nu_2,\,\nu_3)$ such that
$\nu_3$ is an odd number. Introduce the following notation
$$x=\nu_3,\quad y=2\nu_2+\nu_3,\quad z=2\nu_1+2\nu_2+\nu_3.$$
Then the equation (\ref{s2}) will written down as
\begin{equation}
\label{s5}
x^2+y^2+z^2=35-40\lambda,
\end{equation}
moreover the condition (\ref{cond}) is fulfilled.

Then it follows from proof of theorem \ref{qq} that if the equation (\ref{s2}) is solvable, and at least one solution has odd $\nu_3$, then $35-40\lambda\equiv 3(\Mod 8)$, what is equivalent to condition $-5\lambda\in\mathbb{N}$, and the number of solutions, satisfying next to the last condition, is equal to  $L_3(35-40\lambda)$.

Suppose now that the equation (\ref{s2}) has a solution $(\nu_1,\,\nu_2,\,\nu_3)$ such that
$\nu_3$ is an even number. It is clear then that
$35-40\lambda\equiv 0(\Mod 4)$, what is equivalent to relation $-40\lambda\equiv 1(\Mod 4)$.
Introduce the following notation
$$x=\frac{\nu_3}{2},\quad y=\nu_2+\frac{\nu_3}{2},\quad z=\nu_1+\nu_2+\frac{\nu_3}{2}.$$
Then the equation (\ref{s2}) will written as
\begin{equation}
\label{s6}
x^2+y^2+z^2=\frac{35-40\lambda}{4},
\end{equation}
where (\ref{cond1}). Therefore the number of solutions to equation (\ref{s2}) with even $\nu_3$ is equal to $L_3\left(\frac{35-40\lambda}{4}\right)$.
\end{proof}

\begin{example}
1) If $-40\lambda=21,$ then $-40\lambda\equiv 1(\Mod 4),$ $k=(35-40\lambda)/4=14=1^2+2^2+3^2.$ In addition $\lambda=-21/40$ is the least by modulus non-zero number eigenvalue of Laplacian, considered above and the number of highest weights $\Lambda,$ such that $\lambda(\Lambda)=-21/40,$ is equal to $L_3(14)=1.$

2) If $-5\lambda=3,$ then $k=35-40\lambda=59=1^2+3^2+7^2.$ Moreover $\lambda=-3/5,$
$\Lambda=\overline{\omega}_1$ and the number of highest weights $\Lambda',$ such that
$\lambda(\Lambda')=-3/5,$ is equal to $L_3(59)=1.$
\end{example}

\begin{theorem}
Let $G=\SO(7)$ is supplied by biinvariant Riemannian metric $\nu$ such that $\nu(e)=-k_{ad}$. A number $\lambda < 0$ is an eigenvalue of Laplacian on $(G,\nu)$ if and only if the following conditions are fulfilled

1) $-5\lambda\in\mathbb{N}$;

2) natural number $k=35-40\lambda$ is a sum of squares of three mutually different numbers.

In addition the number of highest weights $\Lambda$, such that $\lambda(\Lambda)=\lambda$, is equal to $L_3(35-40\lambda)$.

\end{theorem}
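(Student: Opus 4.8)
The plan is to read the result straight off the multiplicity data already assembled for $\SO(7)$ and to recognise that the relevant Diophantine problem is precisely the first case treated in the proof of Theorem~\ref{spin7}. Since $\nu(e)=-k_{ad}$ forces $\gamma=1$, a number $\lambda<0$ is an eigenvalue of the Laplacian on $(\SO(7),\nu)$ exactly when the set
$$\Xi_5=\{(\nu_1,\nu_2,\nu_3)\in\mathbb{N}^3\mid(2\nu_1+2\nu_2+\nu_3)^2+(2\nu_2+\nu_3)^2+\nu_3^2=35-40\lambda,\ \nu_3\equiv 1(\Mod 2)\}$$
from (\ref{xi5}) is nonempty, and the number of highest weights $\Lambda$ with $\lambda(\Lambda)=\lambda$ equals $|\Xi_5|$. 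So the whole theorem reduces to analysing this single equation under the standing constraint that $\nu_3$ be odd.

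First I would introduce the substitution $x=\nu_3$, $y=2\nu_2+\nu_3$, $z=2\nu_1+2\nu_2+\nu_3$, exactly as in the proof of Theorem~\ref{spin7}. One checks it is a bijection between $\Xi_5$ and the set of triples $(x,y,z)$ with $x,y,z\in\mathbb{N}$, $x<y<z$, all three odd, and $x^2+y^2+z^2=35-40\lambda$: the inverse is $\nu_3=x$, $\nu_2=(y-x)/2$, $\nu_1=(z-y)/2$, which lands in $\mathbb{N}^3$ precisely because $x<y<z$ have the common (odd) parity, while the strict inequalities come from $\nu_1,\nu_2\geq 1$ and the oddness of $x,y,z$ from that of $\nu_3$.

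Next I would invoke Proposition~\ref{AAA}(2): an integer written as a sum of three odd squares is $\equiv 3(\Mod 8)$, and conversely any $k\equiv 3(\Mod 8)$ that is a sum of three squares is automatically a sum of three odd squares. Since the left-hand side of the transformed equation is a positive integer, solvability forces $35-40\lambda\in\mathbb{N}$, hence $40\lambda\in\mathbb{Z}$; the congruence $35-40\lambda\equiv 3(\Mod 8)$ is then equivalent to $40\lambda\equiv 0(\Mod 8)$, i.e. to $5\lambda\in\mathbb{Z}$, which together with $\lambda<0$ is condition~1), $-5\lambda\in\mathbb{N}$. Thus whenever $-5\lambda\in\mathbb{N}$ every representation $35-40\lambda=x^2+y^2+z^2$ with $x<y<z$ has all three summands odd, so the bijection identifies $|\Xi_5|$ with the number of such representations, which is by definition $L_3(35-40\lambda)$; and $\Xi_5\neq\emptyset$ iff $35-40\lambda$ is a sum of three mutually different natural squares, which is condition~2).

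There is essentially no hard step: the only things to verify with care are that the substitution is genuinely a bijection (parities and strict inequalities) and that "sum of three odd squares" coincides with "$\equiv 3(\Mod 8)$ and a sum of three squares", both supplied by Proposition~\ref{AAA}. The one point worth an explicit word is why the second family of Theorem~\ref{spin7} (even $\nu_3$, contributing the eigenvalues with $L_3((35-40\lambda)/4)$) does not appear here: the characteristic lattice of $\SO(7)$ is $\Lambda_0(B_3)$, and it is exactly this lattice that puts the condition $\nu_3\equiv 1(\Mod 2)$ into (\ref{xi5}), so only the odd branch survives and the $\SO(7)$ spectrum is the $\Spin(7)$ spectrum with the even-$\nu_3$ eigenvalues removed.
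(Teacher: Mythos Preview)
Your argument is correct and is exactly the approach the paper takes: its proof reads in full ``It follows from (\ref{xi5}) and proof of theorem \ref{spin7},'' and you have simply spelled out what that entails, namely that the constraint $\nu_3\equiv 1(\Mod 2)$ in $\Xi_5$ selects precisely Case~I of the $\Spin(7)$ analysis via the same substitution and Proposition~\ref{AAA}(2). There is nothing to add or correct.
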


\begin{proof}
It follows from (\ref{xi5}) and proof of theorem \ref{spin7}.
\end{proof}

\begin{theorem}
Let $G=\Sp(3)$ is supplied by biinvariant Riemannian metric $\nu$ such that $\nu(e)=-k_{ad}$. A number $\lambda< 0$ is an eigenvalue of Laplacian on $(G,\nu)$ if and only if the following conditions are satisfied

1) $-16\lambda\in\mathbb{N}$;

2) natural number $k=14-16\lambda$ is a sum of squares of three mutually different numbers.

Moreover the number of highest weights $\Lambda,$ such that $\lambda(\Lambda)=\lambda$, is equal to  $L_3\left(14-16\lambda\right)$.
\end{theorem}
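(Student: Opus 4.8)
The plan is to reuse the machinery already set up in Section~\ref{Sec:C3}. Since $\nu(e)=-k_{ad}$ we have $\gamma=1$, so by formula~(\ref{c1}) a number $\lambda<0$ is an eigenvalue of the Laplacian on $(\Sp(3),\nu)$ precisely when the set $\Xi_6$ from~(\ref{xi6}) is non-empty, i.e. when the equation
\[
(\nu_1+\nu_2+\nu_3)^2+(\nu_2+\nu_3)^2+\nu_3^2=14-16\lambda,\qquad\nu_1,\nu_2,\nu_3\in\mathbb{N},
\]
is solvable; moreover, since $\Lambda^{+}(\Sp(3))=\Lambda^{+}_1(C_3)$, each solution corresponds via $\Lambda_i=\nu_i-1$ to exactly one highest weight $\Lambda$ with $\lambda(\Lambda)=\lambda$, so the number of such highest weights equals $|\Xi_6|$. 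In particular the left-hand side is a positive integer, which forces $14-16\lambda\in\mathbb{N}$, hence $-16\lambda\in\mathbb{N}$; this is condition~1).

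Next I would perform the triangular change of variables
\[
x=\nu_3,\qquad y=\nu_2+\nu_3,\qquad z=\nu_1+\nu_2+\nu_3,
\]
which turns the equation into $x^2+y^2+z^2=14-16\lambda$. The essential point is that $\nu_1,\nu_2,\nu_3\in\mathbb{N}$ is equivalent to $x,y,z\in\mathbb{N}$ together with the strict inequalities $x<y<z$, the inverse map being $\nu_3=x$, $\nu_2=y-x$, $\nu_1=z-y$; hence this correspondence is a bijection. Consequently the number of highest weights $\Lambda$ with $\lambda(\Lambda)=\lambda$ equals the number of presentations of $k=14-16\lambda$ as a sum of squares of three mutually different natural numbers, that is, $L_3(14-16\lambda)$, and $\lambda$ is an eigenvalue if and only if such a presentation exists, which is condition~2).

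I expect no serious obstacle here: the quadratic form in~(\ref{xi6}) is already triangular in $\nu_1,\nu_2,\nu_3$, and — unlike $\SU(4)/(\pm E_4)$, $\SU(4)/C(\SU(4))$ or $\Sp(3)/C(\Sp(3))$, where the characteristic lattice is a proper sublattice of $\Lambda_1(C_3)$ and forces an extra congruence on the $\nu_i$, or unlike $\SU(4)$ and $\Spin(7)$, where a hidden symmetry in the form splits the analysis into cases — here no congruence and no case split is needed. The only thing to verify carefully is that the substitution above is exactly a bijection onto the ordered triples $x<y<z$ in $\mathbb{N}^3$ (no solutions collapsed or created); the remaining bookkeeping is identical to that carried out for Theorems~\ref{qq} and \ref{spin7}, which may be cited directly.
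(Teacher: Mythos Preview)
Your proposal is correct and follows essentially the same approach as the paper: the paper also starts from the Diophantine equation in $\Xi_6$, makes the identical substitution $x=\nu_3$, $y=\nu_2+\nu_3$, $z=\nu_1+\nu_2+\nu_3$, and reads off condition~(\ref{cond1}) to conclude that the number of solutions is $L_3(14-16\lambda)$. Your write-up is in fact slightly more careful than the paper's in spelling out the inverse map and the bijection with ordered triples $x<y<z$.
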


\begin{proof}
Following (\ref{xi6}), consider Diophantine equation
\begin{equation}
\label{s55}
(\nu_1+\nu_2+\nu_3)^2+(\nu_2+\nu_3)^2+\nu_3^2=14-16\gamma\lambda,\quad\nu_1,\,\,\nu_2,\,\,\nu_3\in\mathbb{N}.
\end{equation}
It is clear that if the equation (\ref{s55}) is solvable, then $-16\lambda\in\mathbb{N}$.
Introduce the following notation
\begin{equation}
\label{xyz}
x=\nu_3,\quad y=\nu_2+\nu_3,\quad z=\nu_1+\nu_2+\nu_3.
\end{equation}
Then the equation (\ref{s55}) will written as
\begin{equation}
\label{s66}
x^2+y^2+z^2=14-16\lambda,
\end{equation}
where (\ref{cond1}). Therefore the number of solutions to equation (\ref{s55}) is equal to  $L_3(14-16\lambda)$.
\end{proof}

\begin{example}
1) If $-16\lambda=7,$ then $k=14-16\lambda=21=1^2+2^2+4^2.$ Moreover $\lambda=-7/16$ is the least by modulus non-zero eigenvalue of Laplacian, considered earlier, and the number of highest weights
$\Lambda,$ such that $\lambda(\Lambda)=-7/16,$ is equal to $L_3(21)=1.$

2) If $-16\lambda=12,$ then $k=14-16\lambda=26=1^2+3^2+4^2.$ Moreover $\lambda=-3/4,$
$\Lambda=\overline{\omega}_2$ and the number of highest weights $\Lambda',$ such that
$\lambda(\Lambda')=-3/4,$ is equal to $L_3(26)=1.$
\end{example}

\begin{theorem}
Let $\Sp(3)/C(\Sp(3))$ is supplied by biinvariant Riemannian metric $\nu$ such that
$\nu(e)=-k_{ad}$. A number $\lambda< 0$ is an eigenvalue of Laplacian on $(G,\nu)$ if and only if the following conditions are fulfilled

1) $-8\lambda\in\mathbb{N}$;

2) natural number $k=14-16\lambda$ is a sum of squares of three mutually different natural numbers.

Moreover the number of highest weights $\Lambda,$ such that $\lambda(\Lambda)=\lambda$, is equal to  $L_3\left(14-16\lambda\right)$.
\end{theorem}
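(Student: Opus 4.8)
The plan is to mirror the proof of the preceding theorem for $\Sp(3)$, inserting one extra parity analysis to handle the congruence $\nu_1\equiv\nu_3\,(\Mod 2)$ singling out the highest weights of $\Sp(3)/C(\Sp(3))$ among those of $\Sp(3)$. Concretely, following (\ref{xi7}) with $\gamma=1$, I would start from the constrained Diophantine equation $(\nu_1+\nu_2+\nu_3)^2+(\nu_2+\nu_3)^2+\nu_3^2=14-16\lambda$ in $\nu_1,\nu_2,\nu_3\in\mathbb{N}$ with $\nu_1\equiv\nu_3\,(\Mod 2)$, and note at once that solvability forces $14-16\lambda\in\mathbb{N}$.

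Next I would apply the same substitution (\ref{xyz}) as for $\Sp(3)$, namely $x=\nu_3$, $y=\nu_2+\nu_3$, $z=\nu_1+\nu_2+\nu_3$, which is a bijection onto the triples $x<y<z$ of natural numbers with $x^2+y^2+z^2=14-16\lambda$: the inverse sends such a triple to $\nu_3=x$, $\nu_2=y-x$, $\nu_1=z-y$. Because $\nu_3=x$ and $\nu_1=z-y$, the restriction $\nu_1\equiv\nu_3\,(\Mod 2)$ becomes exactly $x+y+z\equiv 0\,(\Mod 2)$. So the task reduces to counting the representations $k=x^2+y^2+z^2$ with $x<y<z$ natural and $x+y+z$ even, where $k:=14-16\lambda$.

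The one genuinely new point — and the step I expect to be the main (though modest) obstacle — is to show that for this $k$ the parity condition is either automatic or impossible: automatic precisely when $k$ is even, impossible when $k$ is odd. Here I would reuse the elementary observation from proposition \ref{AAA} that an odd square is $\equiv 1\,(\Mod 4)$ and an even square $\equiv 0\,(\Mod 4)$. If $k$ is odd, every way of writing $k$ as a sum of three squares has an odd number of odd summands, hence $x+y+z$ is odd and the constrained equation has no solution; thus $\lambda$ fails to be an eigenvalue, which forces $-16\lambda$ even, i.e. $-8\lambda\in\mathbb{N}$. If instead $k\equiv 0\,(\Mod 4)$ then all of $x,y,z$ must be even, while if $k\equiv 2\,(\Mod 4)$ exactly two of them are odd; in both cases $x+y+z$ is even, so the constraint is vacuous and the constrained count coincides with the full count $L_3(k)$.

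Assembling these observations yields the statement: $\lambda<0$ lies in the spectrum iff $-8\lambda\in\mathbb{N}$ and $k=14-16\lambda$ is a sum of squares of three pairwise distinct natural numbers, and then the number of highest weights $\Lambda$ with $\lambda(\Lambda)=\lambda$ equals the total number of such unordered representations of $k$, that is $L_3(14-16\lambda)$; one could then use proposition \ref{LL} to exhibit the least such eigenvalues explicitly, as in the examples for the other groups. Everything except the mod-$4$ paragraph is already carried out verbatim in the $\Sp(3)$ case, so the only real work is that short parity bookkeeping, which also explains why the spectrum of $\Sp(3)/C(\Sp(3))$ is exactly the even-$k$ part of that of $\Sp(3)$.
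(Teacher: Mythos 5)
Your argument is correct and follows essentially the same route as the paper: the substitution $x=\nu_3$, $y=\nu_2+\nu_3$, $z=\nu_1+\nu_2+\nu_3$ turns the congruence $\nu_1\equiv\nu_3\ (\Mod 2)$ into $z\equiv x+y\ (\Mod 2)$ (your ``$x+y+z$ even''), and the parity bookkeeping showing this constraint is automatic exactly when $k=14-16\lambda$ is even is the same observation the paper makes, only phrased mod $4$ instead of mod $2$. Nothing is missing.
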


\begin{proof}
Following (\ref{xi7}), consider Diophantine equation
\begin{equation}
\label{s50}
(\nu_1+\nu_2+\nu_3)^2+(\nu_2+\nu_3)^2+\nu_3^2=14-16\lambda,\quad\nu_1,\,\nu_2,\,\nu_3\in\mathbb{N},\,\,\nu_1\equiv\nu_3(\Mod 2).
\end{equation}
Under notation (\ref{xyz}) the equation (\ref{s50}) will written in the form (\ref{s66}),
moreover
$$x,\,y,\,z\in\mathbb{N},\,\,x<y<z,\,\,z\equiv x+y(\Mod 2).$$
If $z\equiv x+y(\Mod 2)$, then $z^2\equiv x^2+y^2(\Mod 2)$. Then it follows from (\ref{s66}) that natural number $14-16\lambda$ is even, what is equivalent (for $\lambda<0$) to condition
$-8\lambda\in\mathbb{N}$. Conversely, if $-8\lambda\in\mathbb{N}$, then it is easy to see that it follows from equation (\ref{s66}), where $x,\,y,\,z\in\mathbb{N}$, that $z\equiv x+y(\Mod 2)$. Therefore the number of solutions to equation (\ref{s50}) is equal to $L_3(14-16\lambda)$.
\end{proof}

\section{Laplacian spectrum for groups $\Spin(5)$ and $\SO(5)$}

In work \cite{Svir3} is calculated the Laplace spectrum of Lie groups $\Spin(5)$ and $\SO(5)$, where instead of $\bar{\omega}_1$ was used $\omega=\bar{\omega}_1-\bar{\omega}_2$. But such questions as whether given number is an eigenvalue of Laplacian and how much highest weights correspond to one and the same eigenvalue of Laplacian, remained open. In this section we shall calculate Laplacian spectrum without usage of $\omega$ and will give an answer to these questions.

To Lie groups $\Spin(5)$ и $\SO(5)$ corresponds Lie algebra $\mathfrak{so}(5)$ with root system   $B_2$. We apply Table~II. Simple roots are
$\{\alpha_1=\varepsilon_1-\varepsilon_2,\,\,\alpha_2\}$.
Positive roots are $\varepsilon_i$, $i=1,2$; $\varepsilon_1\pm\varepsilon_2$.
The sum of positive roots is equal to $2\beta=3\varepsilon_1+\varepsilon_2$, whence
\begin{equation}
\label{k2}
\beta=\frac{3\varepsilon_1+\varepsilon_2}{2},\quad \tilde{\alpha}+\beta=\frac{5\varepsilon_1+3\varepsilon_2}{2}.
\end{equation}

1) $b=\langle\tilde{\alpha}+\beta,\tilde{\alpha}+\beta\rangle-\langle\beta,\beta\rangle=\frac{34}{4}-\frac{10}{4}=6.$

2) $\left(\cdot,\cdot\right)=\frac{1}{6}\langle\cdot,\cdot\rangle.$

3) Fundamental weights have the form
$\{\bar{\omega}_1=\varepsilon_1,\,\,
\bar{\omega}_2=\frac{1}{2}(\varepsilon_1+\varepsilon_2)\}.$

4) Set $\Lambda=\Lambda_1\bar{\omega}_1+\Lambda_2\bar{\omega}_2\in\Lambda^{+}(\mathfrak{so}(5)),$ where $\Lambda_i\in\mathbb{Z}$, $\Lambda_i\geq 0$, $i=1,2$. Then $\Lambda+\beta=\nu_1\bar{\omega}_1+\nu_2\bar{\omega}_2$, where $\nu_i=\Lambda_i+1$, $\nu_i\in\mathbb{N}$, $i=1,2.$

By formula (\ref{lam}), eigenvalue $\lambda(\Lambda),$ corresponding to highest weight $\Lambda$,  is equal to
\begin{equation}
\label{x1}
\lambda(\Lambda)=-\frac{1}{6\gamma}[\langle\Lambda+\beta,\Lambda+\beta\rangle-\langle\beta,\beta\rangle]=-\frac{1}{24\gamma}\left((2\nu_1+\nu_2)^2+\nu_2^2-10\right).
\end{equation}

By formula  (\ref{dim}), dimension $d(\Lambda+\beta)$ of representation
$\rho(\Lambda),$ corresponding to highest weight $\Lambda$, is equal to
$$d(\Lambda+\beta)=\frac{(\Lambda+\beta,\varepsilon_1)}{(\beta,\varepsilon_1)}\cdot\frac{(\Lambda+\beta,\varepsilon_2)}{(\beta,\varepsilon_2)}\cdot
\frac{(\Lambda+\beta,\varepsilon_1-\varepsilon_2)}{(\beta,\varepsilon_1-\varepsilon_2)}\cdot
\frac{(\Lambda+\beta,\varepsilon_1+\varepsilon_2)}{(\beta,\varepsilon_1+\varepsilon_2)}.$$
We have
$$\frac{(\Lambda+\beta,\varepsilon_1)}{(\beta,\varepsilon_1)}=
\frac{2\nu_1+\nu_2}{3},\quad
\frac{(\Lambda+\beta,\varepsilon_2)}{(\beta,\varepsilon_2)}=\nu_2,$$
$$\frac{(\Lambda+\beta,\varepsilon_1+\varepsilon_2)}{(\beta,\varepsilon_1+\varepsilon_2)}=
\frac{\nu_1+\nu_2}{2},\quad
\frac{(\Lambda+\beta,\varepsilon_1-\varepsilon_2)}{(\beta,\varepsilon_1-\varepsilon_2)}=
\nu_1.$$
Consequently
\begin{equation}
\label{x2}
d(\Lambda+\beta)=\frac{1}{6}\nu_1\nu_2(\nu_1+\nu_2)(2\nu_1+\nu_2).
\end{equation}

5) Simple roots and fundamental weights of Lie algebra $\mathfrak{so}(5),$ indicated above, define respective lattices $\Lambda_0(B_2)$ and $\Lambda_1(B_2)$ in the following way
\begin{equation}
\label{x3}
\Lambda_0(B_2)=\left\{\Psi_1\alpha_1+\Psi_2\alpha_2\,\mid\,\Psi_1,\Psi_2\in\mathbb{Z}\right\},\quad
\Lambda_1(B_2)=\left\{\Lambda_1\bar{\omega}_1+\Lambda_2\bar{\omega}_2\,\mid\,\Lambda_1,\Lambda_2\in\mathbb{Z}\right\}.
\end{equation}

After expressing roots via fundamental weights
$$\alpha_1=2\bar{\omega}_1-2\bar{\omega}_2,\quad \alpha_2=-\bar{\omega}_1+2\bar{\omega}_2$$
and changing variables
$$\left\{
\begin{array}{l}
\Psi_1=\Omega_1+\Omega_2, \\
\Psi_2=\Omega_1+2\Omega_2
\end{array}\right.\quad\Leftrightarrow\quad
\left\{\begin{array}{l}
\Omega_1=2\Psi_1-\Psi_2, \\
\Omega_2=-\Psi_1+\Psi_2
\end{array}\right.$$
the lattice $\Lambda_0(B_2)$ will have the following form
\begin{equation}
\label{x4}
\Lambda_0(B_2)=\{\Omega_1\bar{\omega}_1+2\Omega_2\bar{\omega}_2\,\,\mid\,\Omega_1,\Omega_2\in\mathbb{Z}\}.
\end{equation}

It follows from formulas (\ref{x3}) and (\ref{x4}) that $\Lambda_0(B_2)\subset\Lambda_1(B_2)$ and
$\Lambda_1(B_2)/\Lambda_0(B_2)\cong\mathbb{Z}_2$, i.e. it has prime order. This and corollary \ref{Col:LCalc} imply that there is no other lattice. The lattice $\Lambda_1(B_2)$ corresponds to compact simply connected Lie group with Lie algebra $\mathfrak{so}(5)$, i.e. to Lie group
$\Spin(5)$, while the lattice $\Lambda_0(B_2)$ corresponds to compact non simply connected Lie group with Lie algebra $\mathfrak{so}(5)$, i.e. to Lie group $\SO(5)$.

a) Let us present formulas, giving Laplacian spectrum of the Lie group $\Spin(5)$.

6a) By formula (\ref{x3}), the set of highest weights of the Lie group $\Spin(5)$ is equal to
$$\Lambda^{+}(\Spin(5))=\left\{\Lambda_1\bar{\omega}_1+\Lambda_2\bar{\omega}_2\,\mid\,\Lambda_i\in\mathbb{Z},\,\,\Lambda_i\geq 0,\,\,i=1,2\right\}.$$

7a) Set $\Lambda=\Lambda_1\bar{\omega}_1+\Lambda_2\bar{\omega}_2\in\Lambda^{+}(\Spin(5))$, then
$\Lambda+\beta=\nu_1\bar{\omega}_1+\nu_2\bar{\omega}_2$, where $\nu_i=\Lambda_i+1$, $\nu_i\in\mathbb{N}$, $i=1,2.$

8a) Applying formula (\ref{abc}) and results of preceding point, we get the following multiplicity of eigenvalue $\lambda(\Lambda)$
$$\sigma(\Lambda)=\frac{1}{36}\sum_{\Omega_1}
\nu_1^2\nu_2^2(\nu_1+\nu_2)^2(2\nu_1+\nu_2)^2,$$
where
\begin{equation}
\label{Om1}
\Omega_1=\{(\nu_1,\nu_2)\in\mathbb{N}^2\,\mid\,(2\nu_1+\nu_2)^2+\nu_2^2=10-24\gamma\lambda\}.
\end{equation}

The least by modulus non-zero eigenvalue of Laplacian is equal to $-\frac{5}{12\gamma}$ and corresponds to irreducible complex representation of the Lie group $\Spin(5)$ with highest weight
$\bar{\omega}_2$. Dimension of this representation is equal to $4$. Consequently the multiplicity of the eigenvalue $-\frac{5}{12\gamma}$ is equal to $4^2=16$.

b) Present formulas, giving Laplacian spectrum of the Lie group $\SO(5)$.

6b) By formula (\ref{x4}), the set of highest weights of Lie group $\SO(5)$ is equal to
$$\Lambda^{+}(\SO(5))=\{\Omega_1\bar{\omega}_1+2\Omega_2\bar{\omega}_2\,\,\mid\,\Omega_i\in\mathbb{Z},\,\,\Omega_i\geq 0,\,\,i=1,2\}.$$

7b) Set $\Lambda=\Omega_1\bar{\omega}_1+2\Omega_2\bar{\omega}_2\in\Lambda^{+}(\SO(5))$, then by p.~4) eigenvalue $\lambda(\Lambda)$ and dimension $d(\Lambda+\beta)$ are calculated respectively
by formulas  (\ref{x1}) and (\ref{x2}), where
$\nu_1=\Omega_1+1$, $\nu_2=2\Omega_2+1$, $\nu_i\in\mathbb{N}$, $i=1,2,$ $\nu_2\equiv 1(\Mod 2)$.

8b) Applying formula (\ref{abc}) and results of preceding point, we get the following multiplicity of the eigenvalue $\lambda(\Lambda)$
$$\sigma(\Lambda)=\frac{1}{36}\sum_{\Omega_2}
\nu_1^2\nu_2^2(\nu_1+\nu_2)^2(2\nu_1+\nu_2)^2,$$
where
\begin{equation}
\label{Om2}
\Omega_2=\{(\nu_1,\nu_2)\in\mathbb{N}^2\,\mid\,(2\nu_1+\nu_2)^2+\nu_2^2=10-24\gamma\lambda,\,\,\nu_2\equiv 1(\Mod 2)\}.
\end{equation}

The least by modulus non-zero eigenvalue of Laplacian is equal to $-\frac{2}{3\gamma}$ and corresponds to irreducible complex representation of the Lie group $\SO(5)$ with highest weight
$\bar{\omega}_1$. Dimension of this representation is equal to $5$. Consequently the multiplicity of the eigenvalue $-\frac{2}{3\gamma}$ is equal to $5^2=25$.

\begin{theorem}
\label{spin5}
Let $G=\Spin(5)$ is supplied by biinvariant metric $\nu$ such that $\nu(e)=-k_{ad}$. A number
$\lambda < 0$ is an eigenvalue of Laplacian on $(G,\nu)$ for one and only one of the following cases (I or II)

I. 1) $-3\lambda\in\mathbb{N}$;

2) natural number $k=10-24\lambda$ is a sum of squares of two mutually different natural numbers.

Moreover the set of highest weights $\Lambda,$ such that $\lambda(\Lambda)=\lambda$, is equal to $L_2(10-24\lambda)$.

II. 1) $-12\lambda\in\mathbb{N}$, $-12\lambda\geq 5$, $-12\lambda\equiv 1(\Mod 2)$;

2) natural number $k=\frac{5-12\lambda}{2}$ is a sum of squares of two different natural numbers.

In addition the number of highest weights $\Lambda,$ such that $\lambda(\Lambda)=\lambda$, is equal to $L_2\left(\frac{5-12\lambda}{2}\right)$.
\end{theorem}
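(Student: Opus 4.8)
The plan is to follow the scheme of the proof of Theorem~\ref{qq}, replacing the ternary analysis there by a binary one. Since $\nu(e)=-k_{ad}$ we have $\gamma=1$, and by step~8a) — that is, by formula (\ref{x1}) together with the description of $\Omega_1$ in (\ref{Om1}) — a number $\lambda<0$ is an eigenvalue of $\Delta$ on $(G,\nu)$ if and only if the equation $(2\nu_1+\nu_2)^2+\nu_2^2=10-24\lambda$ with $\nu_1,\nu_2\in\mathbb{N}$ has a solution, and the number of highest weights $\Lambda$ with $\lambda(\Lambda)=\lambda$ equals the number of its solutions $(\nu_1,\nu_2)$ (via $\nu_i=\Lambda_i+1$). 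In particular $-24\lambda\in\mathbb{N}$ whenever $\lambda$ is an eigenvalue. Putting $x=\nu_2$ and $z=2\nu_1+\nu_2$ gives a bijection between solutions $(\nu_1,\nu_2)\in\mathbb{N}^2$ and pairs $(x,z)\in\mathbb{N}^2$ with $x^2+z^2=10-24\lambda$, $x<z$, and $z\equiv x(\Mod 2)$; here $x<z$ encodes $\nu_1=\tfrac{z-x}{2}\ge 1$, and since $z-x=2\nu_1$ the parity condition is automatic, never an extra restriction. As $x$ and $z$ then have the same parity, I split the count according to the parity of $\nu_2$.

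Suppose first that there is a solution with $\nu_2$ odd. Then $x$ and $z$ are both odd, so $x^2+z^2\equiv 2(\Mod 8)$ by the elementary remark used in the proof of Proposition~\ref{AAA}; hence $10-24\lambda\equiv 2(\Mod 8)$, which is equivalent to $-3\lambda\in\mathbb{N}$. Conversely, if $-3\lambda\in\mathbb{N}$ then $10-24\lambda\equiv 2(\Mod 8)$, and by the converse part of Proposition~\ref{AAA}(1) every representation of $10-24\lambda$ as a sum of two integer squares already has both summands odd; thus the parity condition $z\equiv x(\Mod 2)$ holds automatically, the diagonal $x=z$ — which would force $\nu_1=0$ — is excluded in the definition of $L_2$, and the number of solutions with $\nu_2$ odd equals $L_2(10-24\lambda)$. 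This number is positive precisely when $10-24\lambda$ is a sum of squares of two distinct natural numbers, i.e. condition~I.2 holds. This yields case~I together with its count.

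Suppose instead that there is a solution with $\nu_2$ even. Then $x=2a$, $z=2b$ with $a,b\in\mathbb{N}$, $a<b$, and the equation becomes $a^2+b^2=\tfrac{5-12\lambda}{2}$; in particular $5-12\lambda$ is even, i.e. $-12\lambda$ is an odd natural number, and from $a<b$ we get $a^2+b^2\ge 1+4=5$, hence $-12\lambda\ge 5$. Conversely, any representation of $\tfrac{5-12\lambda}{2}$ as a sum of two distinct natural squares returns a solution with $\nu_2$ even (no further parity restriction appears, since $\nu_2=2a$ is automatically even), so the number of such solutions equals $L_2\!\left(\tfrac{5-12\lambda}{2}\right)$, giving case~II. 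Finally, cases~I and~II are mutually exclusive: if $-3\lambda\in\mathbb{N}$ then $-12\lambda=4(-3\lambda)$ is even, contradicting that $-12\lambda$ is odd — equivalently, a solution with $\nu_2$ odd forces $10-24\lambda\equiv 2(\Mod 4)$ whereas one with $\nu_2$ even forces $10-24\lambda\equiv 0(\Mod 4)$, so for a fixed $\lambda$ only one parity of $\nu_2$ can occur. Hence $\lambda<0$ is an eigenvalue if and only if exactly one of~I,~II holds, and the number of corresponding highest weights is as stated.

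I expect no real conceptual obstacle: the argument is a binary-form replica of the proof of Theorem~\ref{qq}. The only point requiring care is the exact matching of solution counts with the functions $L_2$ — that the strict inequality $x<z$ is precisely $\nu_1\ge 1$, that the diagonal $x=z$ must be discarded, and that in the odd case Proposition~\ref{AAA} makes the parity constraint free of charge, so that no solution is double-counted or missed.
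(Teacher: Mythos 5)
Your proposal is correct and follows essentially the same route as the paper's proof: the same Diophantine equation $(2\nu_1+\nu_2)^2+\nu_2^2=10-24\lambda$, the same split by the parity of $\nu_2$ with the substitutions $x=\nu_2$, $z=2\nu_1+\nu_2$ (resp. halved in the even case), the same appeal to Proposition~\ref{AAA} for the residue $2\ (\Mod 8)$ characterization, and the same identification of the counts with $L_2(10-24\lambda)$ and $L_2\bigl(\tfrac{5-12\lambda}{2}\bigr)$. Your explicit remarks on the bijection, the exclusion of the diagonal $x=z$, and the mutual exclusivity of cases I and II are minor clarifications of what the paper leaves implicit.
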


\begin{proof}
Following (\ref{Om1}), let us consider Diophantine equation
\begin{equation}
\label{sss2}
(2\nu_1+\nu_2)^2+\nu_2^2=10-24\lambda,\quad\nu_1,\,\,\nu_2\in\mathbb{N}.
\end{equation}
It is clear that if the equation (\ref{sss2}) is solvable, then $-24\lambda\in\mathbb{N}$.

Assume at first that the equation (\ref{sss2}) has a solution $(\nu_1,\,\nu_2)$ such that $\nu_2$ is an odd number. After introducing notation $x=\nu_2,$ $y=2\nu_1+\nu_2$ the equation (\ref{sss2}) will written as
\begin{equation}
\label{sss5}
x^2+y^2=10-24\lambda,
\end{equation}
where
\begin{equation}
\label{condit}
x,\,y\in\mathbb{N},\,\,x\equiv 1(\Mod 2),\,\,y\equiv 1(\Mod 2),\,\,x<y.
\end{equation}

On the ground of proposition \ref{AAA}, if the equation (\ref{sss5}) is solvable under condition (\ref{condit}), then
$10-24\lambda\equiv 2(\Mod 8)$, what is equivalent to condition $-3\lambda\in\mathbb{Z}$, and the number of solutions to the equation (\ref{sss5})  is equal to $L_2(10-24\lambda)$.

Let suppose now that the equation (\ref{sss2}) has a solution $(\nu_1,\,\nu_2)$ such that
$\nu_2$ is even number. It is clear that then $10-24\lambda\equiv 0(\Mod 4)$, what is equivalent to relation $-12\lambda\equiv 1(\Mod 2)$. In notation $x=\frac{\nu_2}{2}$, $y=\nu_1+\frac{\nu_2}{2}$ the equation (\ref{sss2}) takes the form
\begin{equation}
\label{sss6}
x^2+y^2=\frac{5-12\lambda}{2},\quad x,\,y\in\mathbb{N},\,\,x<y.
\end{equation}
where (\ref{condit}). Therefore the number of solutions to equation (\ref{sss2}) with even $\nu_2$ is equal to $L_2\left(\frac{5-12\lambda}{2}\right)$.
\end{proof}

\begin{theorem}
Let $G=\SO(5)$ is supplied by biinvariant Riemannian metric $\nu$ such that $\nu(e)=-k_{ad}$. A number $\lambda < 0$ is an eigenvalue of Laplacian on $(G,\nu)$ if and only if the following conditions are fulfilled

1) $-3\lambda\in\mathbb{N}$;

2) natural number $k=10-24\lambda$ is a sum of squares of two different natural numbers.

Moreover the number of highest weights $\Lambda,$ such that $\lambda(\Lambda)=\lambda$, is equal to $L_2(10-24\lambda)$.
\end{theorem}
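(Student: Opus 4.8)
The plan is to extract the statement directly from the multiplicity computation carried out in point 8b) above, i.e.\ from the description of the solution set $\Omega_2$ in (\ref{Om2}) (with $\gamma=1$, since $\nu(e)=-k_{ad}$), and then to repeat the argument used for ``case~I'' in the proof of Theorem~\ref{spin5}. The key observation is that for $\SO(5)$ the congruence $\nu_2\equiv 1\,(\Mod 2)$ is already built into the set of highest weights, so that ``case~II'' of Theorem~\ref{spin5} (the one with $\nu_2$ even) cannot arise, and the two-case alternative of that theorem collapses to a single ``if and only if''.

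First I would note that $\lambda<0$ is an eigenvalue of the Laplacian on $(\SO(5),\nu)$ precisely when the set $\Omega_2$ of (\ref{Om2}) is non-empty, and that in that case the number of highest weights $\Lambda=\Omega_1\bar{\omega}_1+2\Omega_2\bar{\omega}_2\in\Lambda^{+}(\SO(5))$ with $\lambda(\Lambda)=\lambda$ equals $|\Omega_2|$, because $\Lambda\mapsto(\nu_1,\nu_2)=(\Omega_1+1,\,2\Omega_2+1)$ is a bijection onto the pairs in $\mathbb{N}^2$ with second coordinate odd, and $\lambda(\Lambda)=\lambda$ is equivalent to $(2\nu_1+\nu_2)^2+\nu_2^2=10-24\lambda$ by (\ref{x1}). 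Solvability of this Diophantine equation forces $-24\lambda\in\mathbb{N}$.

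Next I would carry out the substitution $x=\nu_2$, $y=2\nu_1+\nu_2$ from the proof of Theorem~\ref{spin5}. One checks this is a bijection from $\Omega_2$ onto the set of pairs of odd natural numbers $(x,y)$ with $x<y$ and $x^2+y^2=10-24\lambda$: the inequality $y=2\nu_1+\nu_2>\nu_2=x$ is automatic (so in particular $x\ne y$), and conversely $\nu_2=x$, $\nu_1=(y-x)/2$ recovers a valid preimage because $x,y$ odd makes $y-x$ a positive even integer, hence $\nu_1\in\mathbb{N}$. Invoking Proposition~\ref{AAA}(1): a solution with $x,y$ both odd forces $10-24\lambda\equiv 2\,(\Mod 8)$, and conversely when $10-24\lambda\equiv 2\,(\Mod 8)$ \emph{every} representation of $10-24\lambda$ as a sum of two squares has both summands odd. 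Combining $-24\lambda\in\mathbb{N}$ with $10-24\lambda\equiv 2\,(\Mod 8)$ yields exactly the condition $-3\lambda\in\mathbb{N}$ of item~1). Since the bijection always produces $x<y$, no representation with $x=y$ is lost, so the cardinality in question is by definition $L_2(10-24\lambda)$; this gives at once the equivalence with item~2) (namely $L_2(10-24\lambda)\ne 0$) and the asserted count of highest weights.

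I expect no real obstacle: the only things needing attention are that the change of variables $x=\nu_2$, $y=2\nu_1+\nu_2$ is genuinely a bijection respecting the strict inequality $x<y$, and the elementary bookkeeping turning ``$-24\lambda\in\mathbb{N}$ and $10-24\lambda\equiv 2\,(\Mod 8)$'' into ``$-3\lambda\in\mathbb{N}$''. All the arithmetic substance — which integers are sums of two squares, the parity of the summands, and the counting function $L_2$ — is already packaged in Theorems~\ref{sq} and \ref{sq1} and Proposition~\ref{AAA}, which are used here as black boxes.
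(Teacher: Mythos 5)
Your proposal is correct and follows essentially the same route as the paper, whose proof simply invokes the description of the set $\Omega_2$ in (\ref{Om2}) together with case~I of the proof of Theorem~\ref{spin5}: the built-in parity condition $\nu_2\equiv 1\,(\Mod 2)$ eliminates case~II, and the substitution $x=\nu_2$, $y=2\nu_1+\nu_2$ with Proposition~\ref{AAA} gives both the congruence $-3\lambda\in\mathbb{N}$ and the count $L_2(10-24\lambda)$. You have merely spelled out the details (bijectivity of the change of variables and the bookkeeping) that the paper leaves implicit, so there is no gap.
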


\begin{proof}
It follows from (\ref{Om2}) and proof of theorem \ref{spin5}.

\end{proof}


\begin{thebibliography}{99}

\bibitem{BerSvir1}
{\sl Berestovskii~V.N., Svirkin~V.M.} The Laplace operator on normal homogeneous Riemannian
manifolds, Siberian Adv. Math., 20(2010), {\No}~4, p.~231-255.


\bibitem{Onishik}
{\sl Onishchik~A.L.} Lectures on Real Semisimple Lie Algebras and Their Representations. ESI Lectures in Mathematics and Physics. Zurich, Switzerland: Europ. Math. Soc., 2014. 86 p.

\bibitem{BerSvir2}
  {\sl Berestovskii~V.N., Svirkin~V.M.} The spectrum of the Laplace operator on compact simply connected rank two Lie groups (Russian). Uchen. zap. Kazan. gos. un-ta. Ser. Fiz.-matem. nauki. 2009. V.\,151, book. 4. P. 15--35.

\bibitem{Svir3}
  {\sl Svirkin~V.M.} The spectrum of the Laplace operator on connected compact ranks two and one Lie groups (Russian). Uchen. zap. Kazan. gos. un-ta. Ser. Fiz.-matem. nauki. 2010. V.\,152, book. 1. P. 219--234.

\bibitem{Ber1}
{\sl Berestovskii~V.N.} On the spectrum of the Lapace operator for real-valued functions on compact normal homogeneous Riemannian manifolds (Russian). Proceedings of the International Conference
''Geometry days in Novosibirsk-2011'' dedicated to the 50th Anniversary of the Chair of Geometry and Topology of the Novosibirsk State University. Novosibirsk, 2012. P. 16-28.

\bibitem{Ber2}
{\sl Berestovskii~V.N.} Zonal spherical functions on CROSS'S and special functions. Siber. Math. J., 2012, v. 53, n. 4, p. 611-624.

\bibitem{Burb}
{\sl Bourbaki~N.} Groupes et algebres de Lie. Chap. IV--VI. Hermann 115, Boulevard Saint-Germain, Paris VI, 1968.

\bibitem{Cartan}
{\sl Cartan~E.} Les groupes projectifs qui ne laissent invariante aucune multiplicity plane. Bull. Soc. Math. 1913. V.~41. P. 53--94.

\bibitem{Dyn}
{\sl Dynkin~E.B., Onishchik~A.L.} Compact Lie groups in the large (Russian). Usp. matem. nauk. 1955.  V.\,10, \No 4(66). P. 3--74.

\bibitem{Adams}
{\sl Adams~J.F.} Lectures on Lie groups. W.A.~Benjamin, Inc. New York, Amsterdam, 1969.

\bibitem{Buh}
{\sl Buchshtab~A.A.} Number theory (Russian). M.:\, Uchpedgiz, 1960.

\bibitem{Ven}
{\sl Venkov~B.} Elementary number theory (Russian). M.-L.:\, ONTY NKTP, USSR, 1937.

\bibitem{Dev}
{\sl Davenport~H.} The higher arithmetic. Cambridge University Press, 1999.

\bibitem{E}
Non-published materials of L.~Euler in number theory (Russian). Sanct-Petersburg:\, Nauka, 1997.

\bibitem{C}
{\sl Conway~J.H.} The sensual quadratic form. The Carus Mathematical Monographs (26). The Mathematical Association of America, 1997.


\bibitem{CS}
{\sl Conway~J.H., Sloane~N.J.A.} Sphere Packings, Lattices and groups. Springer-Verlag, Berlin, Heidelberg, New York, Tokyo, 1988.
\end{thebibliography}
\end{document}